\newtheorem{theorem}{Theorem}[section]
\newtheorem*{claim*}{Claim}
\newtheorem{corollary}[theorem]{Corollary}
\newtheorem{definition}[theorem]{Definition}
\newtheorem{lemma}[theorem]{Lemma}
\newtheorem{remark}[theorem]{Remark}
\newtheorem{statement}[theorem]{Statement}
\newcommand{\starv}{S{\scriptscriptstyle N}}
\newcommand{\UcovRes}{U{\scriptscriptstyle R}}
\newcommand{\IsolRes}{I{\scriptscriptstyle R}}
\newcommand{\comment}[1]{}
\author{S. Bereg\thanks{Department of Computer Science, University of Texas at Dallas, USA.}, L.E. Caraballo\thanks{Department of Applied Mathematics II, University of Seville, Spain. email:lcaraballo@us.es}, J.M. D\'iaz-B\'a\~nez\thanks{Department of Applied Mathematics II, University of Seville, Spain. email:dbanez@us.es}, M.A. Lopez\thanks{Department of Computer Science, University of Denver, USA.}, }
\title{Resilience of a synchronized multi-agent system}
\begin{document}
\maketitle
%\newgeometry{bottom=4.5cm}

\begin{abstract}
Fault tolerance is increasingly important for unmanned autonomous vehicles. For example, in a multi robot system the agents need the ability to effectively detect and tolerate internal failures in order to continue performing their tasks without the need for immediate human intervention. The system must react to unplanned events in order to optimize the task allocation between the robots. In a broad sense, the resilience of a system can be defined as the ability to maintain or recover a stable state when subject to disturbance and it is related to the concept of robustness in industrial systems.

In this paper, we study the resilience in a synchronized multi-robot system stated as follows:
Consider a team of $n$ (ground or aerial) robots each moving along predetermined periodic closed trajectories. Each of the agents needs to communicate information
about its operation to other agents, but the communication links have a limited range. Hence, when
two agents are within communication range, a communication link is established, and information
is exchanged. Thus, two neighbors are synchronized if they visit the communication link at the same time and a multi-robot system is called synchronized if each pair of neighbors is synchronized. If a set of robots left the system, then some trajectories has no robots. In these cases, when an alive robot detects no neighboring robot then it pass to this neighboring trajectory to assume the unattended task.
 In this framework,  a fault-tolerance measure is introduced: the resilience of the system is the largest number of robots that can fail while executing the global task. Two tasks are considered: (a) no all the vehicles loss communication and, (b) all trajectories are covered. Thus, the two measures studied are called the isolation-resilience and the uncovering-resilience. Interesting combinatorial properties of the resilience are showed that allow to know its value for some usual scenarios.

%In a broad sense, �resilience� describes how systems operate at an acceptable level of normalcy despite disturbances or threats.
%In this paper, the \emph{starvation} is presented as a case of isolation between the members of a synchronized system of robots. Also, the \emph{resilience} of a synchronized system of robots is presented and studied as a robustness measure to the starvation problem.
\end{abstract}

\section{Introduction}

There exist a number of significant links between robotics and mathematics \cite{sack1999handbook,halperin1997robotics}. Recently, there has been increased research interest in systems composed of multiple autonomous mobile robots exhibiting cooperative behavior and the coordination of multi-agent systems is a very trendy research area and provides interesting mathematical problems \cite{kawamura2015simple, kawamura2015fence, dumitrescu2014fence, kranakis2011boundary, chevaleyre2004theoretical, acevedo_jint14, caraballo2014block}. On other hand, many algorithmic and combinatorial problems are very interesting to design coordinated multi-robot systems \cite{cusick1973view, wills1967zwei, alejo2013velocity}.

Scalability, fault-tolerance and failure-recovery have always been a challenge for a distributed systems. In a recent paper, \cite{dbanez2015icra} pose the following scenario: there is a team of $n$ aerial robots (robots) which are periodically traveling along predetermined closed trajectories while performing an assigned task. Each of the agents needs to communicate information about its operation to other agents, but the communication interfaces have a limited range. Hence, when two agents are within communication range, a communication link is established, and information is exchanged using the established communication graph. If two neighboring agents can exchange information periodically, we say they are ``synchronized''. Given the path geometries, the so-called \emph{synchronization problem} is to schedule (if possible) the movement agents along the trajectories so that every pair of neighboring agents is synchronized.

\begin{figure}
\begin{subfigure}{.245\textwidth}
\centering
\includegraphics[page=5]{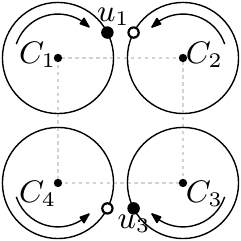}
\caption{}
\label{fig:synchro_sample_a}
\end{subfigure}
\begin{subfigure}{.245\textwidth}
\centering
\includegraphics[page=6]{starvation_steps_intro.pdf}
\caption{}
\label{fig:synchro_sample_b}
\end{subfigure}
\begin{subfigure}{.245\textwidth}
\centering
\includegraphics[page=7]{starvation_steps_intro.pdf}
\caption{}
\end{subfigure}
\begin{subfigure}{.245\textwidth}
\centering
\includegraphics[page=8]{starvation_steps_intro.pdf}
\caption{}
\end{subfigure}
\caption{Synchronized system of aerial robots following circular trajectories. The robots are represented as solid points in the circles. 
%The arcs with arrows indicate the movement direction of each robot in its trajectory.
In this example the communication graph is a cycle.}
\label{fig:synchro_sample}
\end{figure}

Figure~\ref{fig:synchro_sample} shows a synchronized system where every pair of neighboring robots are flying in opposite directions (one of them in clockwise direction and the other in counterclockwise) at the same constant speed. We have enclosed in a gray shadow the pairs of neighboring robots with an established communication link between them. Each subfigure represents a state of the system every quarter of a period (time to make a tour in a trajectory) starting at the state in Figure~\ref{fig:synchro_sample_a}.

This problem can appear in missions of surveillance or monitoring \cite{acevedo_jint14, pasqualetti2012cooperative}, in structures assembly while the robots are loading and placing parts in the structure \cite{??} but many other applications exist. In fact, its eventual solution may find many applications beyond this initial problem.

%Notice that the synchronization problem is very relevant in aerial robots  teams due to:
% \begin{itemize}
%\item Rotorcraft robots (i.e. helicopters or multirotor systems) have very strong energy constraints that limit the flight endurance. Then, hovering, which is very energy demanding,  waiting for other aerial robots to communicate or to perform cooperatively a task,   should be minimized by means of synchronization. 
%\item The synchronization of fixed wing aircraft imposes more strict constraints when they should meet to interchange information due to the velocity of these aircrafts that may lead to communication losses when using short range communication devices.
%  \end{itemize}
%
%Additionally, the short communication range between aerial robots, and unmanned aerial vehicles (robots) in general, is interesting from the point of view of security. In fact, this short range communication may avoid communication jamming, which is a main threat in the practical application of the unmanned aerial vehicles. \cite{dbanez2015icra} focus the synchronization problem in teams of robots, but the shown results are applicable to general multi-robot systems.
% 
The authors solve the problem in a simplified model where all agents traverse
around unit circles at uniform speed (like in Figure~\ref{fig:synchro_sample}) and then more general models are also considered.
In the paper, the authors also considered an interesting variation of the
problem where dropouts in one or more agents are handled by the surviving agents.
In order to minimize
the detrimental effect of the departing robots on global system
performance, the proposed strategy consists of repeatedly
``switching'' the trajectory when a neighbor leaves the system. Suppose that a set of robots left the system, then some trajectories has no robots. In these cases, when an alive robot (a robot that has not left the system) arrives to a link position with a trajectory without robot and it detects no neighboring robot then it passes to this neighboring trajectory, see Figure~\ref{fig:ch_routes}.
%The moment to make the swap with a neighbor is when the alive robot arrives to the link position.
%In this scenario, some computational results are presented showing the 
%robustness of the approach. 
%
\begin{figure}[h]
\centering
\includegraphics[scale=.8]{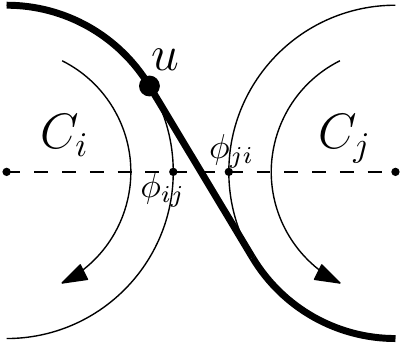}
\caption{A robot changes its trajectory when it detects that the corresponding neighbor has left the system. The robot $u$ follows the path drawn with fat stroke. The trajectory $C_j$ has no robot, then when the robot $u$ (in $C_i$) arrives to the link position pass to $C_j$.}
\label{fig:ch_routes}
\end{figure}

However, in some cases, when a set of robots leaves the team an undesirable phenomenon may occur: a robot, independent of how much longer stays in flight, it permanently fails to encounter other robots at the communication links. In this case, we say that the robot \emph{starves} and  the system falls in \emph{starvation state} if all robots starve. Figure~\ref{fig:starvation_intro} shows a synchronized system that falls in starvation state when two robots leave the team, note that the surviving robots $u_1$ and $u_3$ permanently fail to encounter other robot in the link positions. Also, in Figure~\ref{fig:starvation_intro}, note that none of the trajectories is completely \emph{covered}, that is, every trajectory has a section that is not visited by the surviving robots. The covering of the trajectories and the concept of starvation in a multi-agent system leads to interesting problems in Discrete Mathematics and has applications in the study of the fault tolerance of a synchronized system.

\begin{figure}[!h]
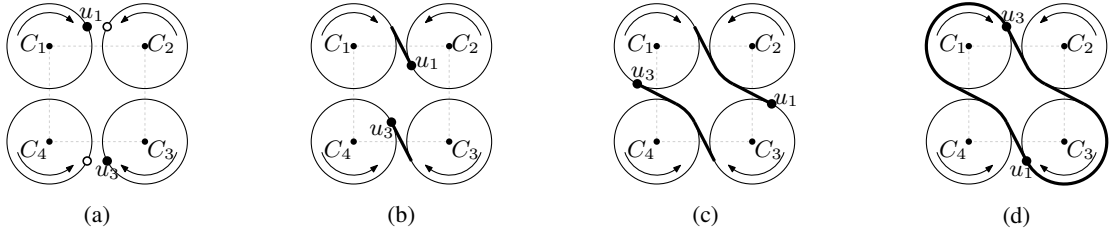

\begin{subfigure}{.245\textwidth}
\centering
\includegraphics{starvation_steps_intro.pdf}
\caption{}
\label{fig:starvation_intro_a}
\end{subfigure}%
\begin{subfigure}{.245\textwidth}
\centering
\includegraphics[page=2]{starvation_steps_intro.pdf}
\caption{}
\label{fig:starvation_intro_b}
\end{subfigure}%
\begin{subfigure}{.245\textwidth}
\centering
\includegraphics[page=3]{starvation_steps_intro.pdf}
\caption{}
\label{fig:starvation_intro_c}
\end{subfigure} %
\begin{subfigure}{.245\textwidth}
\centering
\includegraphics[page=4]{starvation_steps_intro.pdf}
\caption{}
\end{subfigure}
\caption{When the robots represented by non-solid points leave the system then the surviving robots, solid points, follow the paths drawn with fat stroke.}
\label{fig:starvation_intro}
\end{figure}
%\subsection{Related work}

\subsection{Contribution and Organization}

In this work, we introduce the notion of resilience in a synchronized multi robot system, and consider two global tasks to be maintained after possible fails of some robots in the system. The first goal is to avoid the starvation phenomenon and the second one to ensure all trajectories are covered. The resilience is defined as maximum number of robots that can leave the team so that the system does not fall in starvation state or no trajectory is abandoned. The larger its value the more fault tolerant the system is.
We present some general properties of the synchronized systems and specified properties of the resilience concept that help to calculate the resilience for usual communication graphs as trees, cycles or grids.

This paper is organized as follows: the next section states the theoretical model, presents some necessary concepts and results that have already been proven in a previous work \cite{dbanez2015icra}. Section~\ref{ch:res} presents formally the problem of the resilience. We introduce a notion of \emph{ring} and study their properties in Section~\ref{ch:rings}. Sections~\ref{sec:ucov_res} and \ref{ch:isol} contain our results on the uncovering-resilience and the isolation-resilience, respectively.
Finally, Section~\ref{ch:conclusions} presents the conclusion of this work and some future research.

\section{Preliminaries}
Before introducing the problem studied in this paper, we formally state the framework and show some useful properties of a synchronized system. In our study, we focus on a simple model. However, the results can be generalized to more realistic scenarios following the results of \cite{dbanez2015icra}.

In this simple model, all robots move in equal and pairwise disjoint circular trajectories at the same (constant) speed with the same communication range. Therefore, every agent takes the same time to make tour in a trajectory, this time is called \emph{system period}. For simplicity, we work with unit circles representing the fly trajectories of the $n$ robots and let $r$ be the communication range of the robots. Also, the following remark is considered:

\begin{remark}\label{rem:systemPeriod_basicTime}
One unit of time corresponds to one system period.
\end{remark}

\begin{definition}[System of robots]
A \emph{system of robots} is a triplet $(T,U,r)$ where $U=\{u_1, \dots,u_n\}$ is a set of identical robots, $T=\{C_1,\dots,C_n\}$ is a set of pairwise disjoint unit circles (trajectories) and $r$ is the communication range of the robots. 
\end{definition}

\begin{figure}[h]
    \centering
    \includegraphics[width=.4\textwidth, trim = 4cm 5cm 4cm 6cm, clip=true]{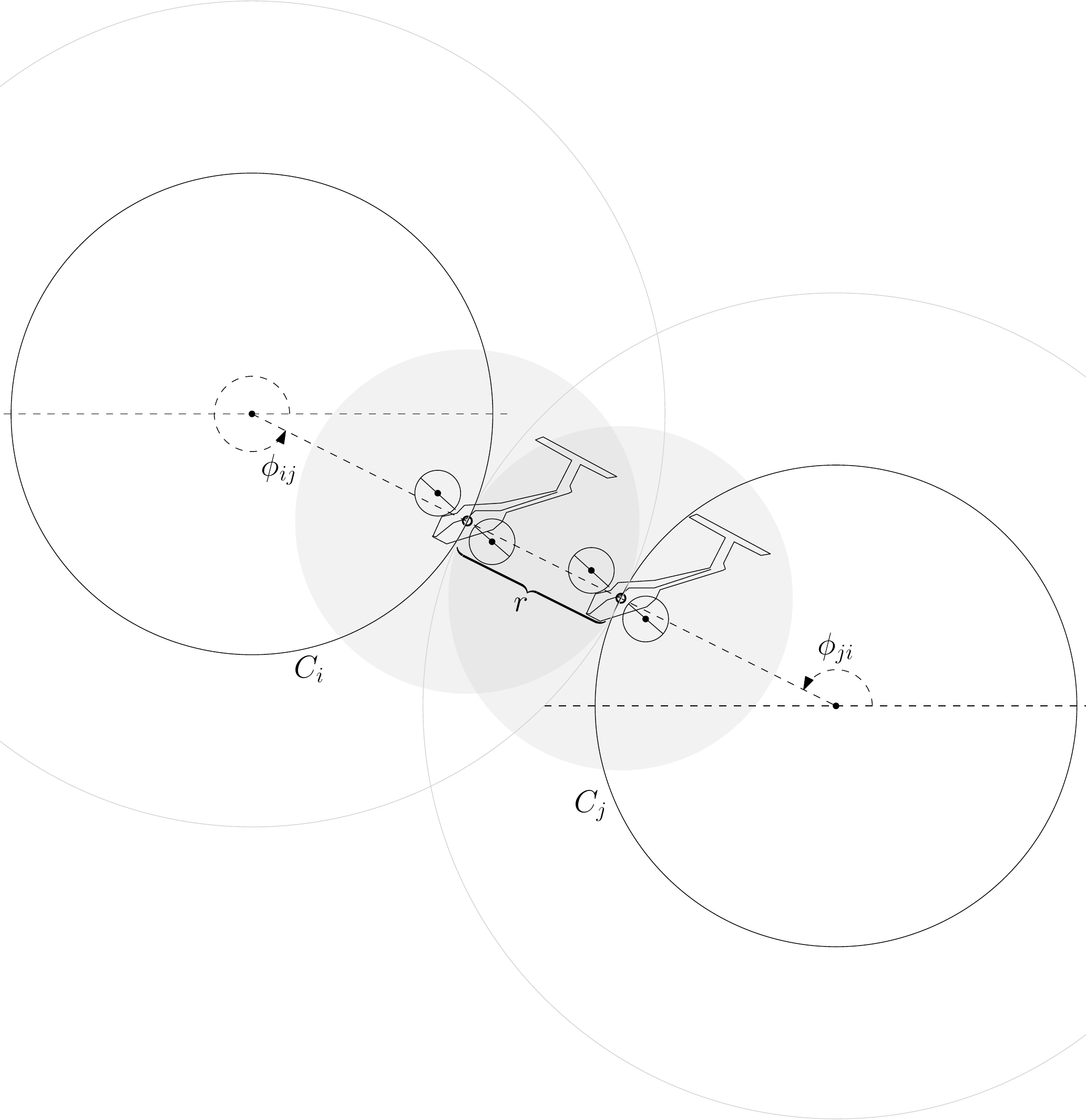}
    \caption{The simple model. Robots $i$ and $j$ can share information if the distance between $C_i$ and $C_j$ is less or equal to $r$. $\phi_{ij}$ (resp. $\phi_{ji}$) is the angle at which $i$ (resp. $j$) is closest to $j$'s trajectory (resp. $i$'s trajectory).}
    \label{fig:robots}
\end{figure}

%In a system of robots two robots can potentially share information if the distance between their corresponding trajectories is at most $r$ (see Figure~\ref{fig:robots}). In order to model the ensuing communication constraints the following concept is used:

\begin{definition}[Communication Graph]
Let $(T,U,r)$ be a system of robots. The \emph{communication graph} of the system is the graph $G=(V,E)$ whose nodes are the elements of $T$, that is $V=T$, and two nodes are connected by an edge if and only if the distance between their centers is less or equal to $2+r$ (see Figure~\ref{fig:2angles}). We denote by $(i,j)$ the edge that connect the adjacent nodes $C_i$ and $C_j$.
\end{definition}

\begin{figure}[h]
    \centering
    \includegraphics[width=.45\textwidth]{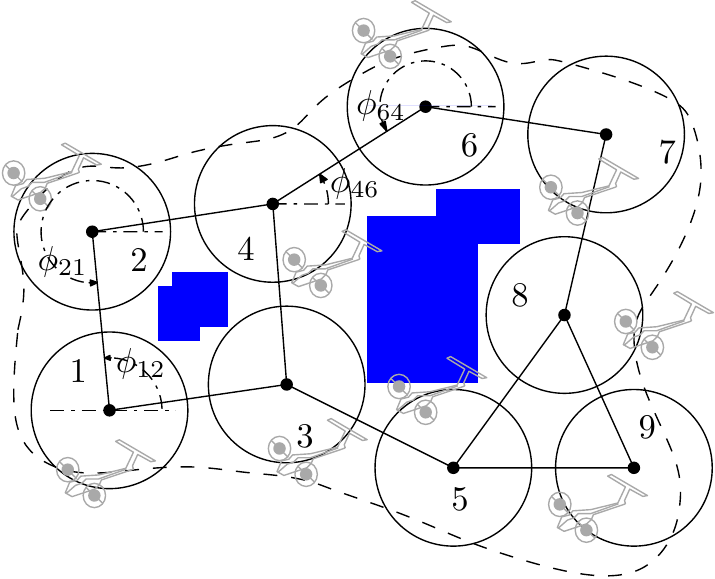}
    \caption{Representation of a set $C_1,C_2,\dots,C_9$ of unit circles and the underlying communication graph.}
    \label{fig:2angles}
\end{figure}

In the analysis that follows, it will be convenient to denote the position of an robot by the angle, measured from the positive horizontal axis, that its position makes on the unit circle (see Figure~\ref{fig:robots}). 

%Below, an useful remark to operate with positions of robots in the trajectories:
\begin{remark}\label{rem:positions_congruence}
The position of a robot in a trajectory is an angle in $[0;2\pi)$. Therefore, if $p$ is a position then $p=p+2k\pi$, for all $k\in\mathbb{Z}$.
\end{remark}

\begin{definition}[Link Position]\label{def:link_position}
Let $G=(V,E)$ be the communication graph of a system of robots and $(i,j)$ an arbitrary edge of $E$. The \emph{link position of $C_i$ with respect to $C_j$}, denoted by $\phi_{ij}$, is the angle of the point in $C_i$ that is closest to $C_j$.
\end{definition}

\begin{remark}
If $\phi_{ij}$ is defined, so is $\phi_{ji}$ and $\phi_{ji}=\phi_{ij}+\pi$ (equivalently $\phi_{ji}=\phi_{ij}-\pi$).
\end{remark}

\begin{definition}[Synchronized Neighbors]
Two robots flying in $C_i$ and $C_j$ respectively, are \emph{neighbors} if $(i,j)\in E$. Then, two neighbors flying in $C_i$ and $C_j$ respectively are \emph{synchronized} if both arrive at the same time to the link positions $\phi_{ij}$ and $\phi_{ji}$, respectively. Notice that the number of synchronized pairs is bounded by $|E|$. If the number of synchronized pairs is $|E|$, we say that the system of robots is \emph{synchronized}.
\end{definition}

\begin{remark}\label{rmk:repeated_meetings}
Because of our model assumptions, two synchronized neighbors ``meet" each other repeatedly every $2\pi$ units of travel. Equivalently, two synchronized neighbors ``meet" each other repeatedly every one unit of time. 
\end{remark}

Note that, since the robots move at constant speed, it suffices to know the starting position and movement direction (clockwise or counterclockwise) of an robot in order to compute its position at any time. 

\begin{definition}[Schedule]
Let $T$ be the set of trajectories of a system of robots. A \emph{schedule} of the system is a pair $F=(f,g)$ of functions $$f:T\rightarrow [0;2\pi),$$ $$g:T\rightarrow\{1;-1\},$$ such that for all $C_i\in T$ the value $f(C_i)\in [0;2\pi)$ is the starting position for a robot in $C_i$ and $g(C_i)\in\{1,-1\}$ indicates the movement direction, 1 is counterclockwise (CCW) and -1 is clockwise (CW). The position of the robot in the circle $C_i$ at the time $t$ is 
\begin{equation} \label{sync}
f(C_i)+g(C_i)\cdot 2\pi \cdot t.
\end{equation}
\end{definition}

Attending to the above definitions the following remark is deduced:
\begin{remark}\label{rem:synchro1}
Let $G=(V,E)$ and $F=(f,g)$ be the communication graph and a schedule of a system of robots, respectively. Let $(i,j)$ be an edge of $E$, the robots flying in $C_i$ and $C_j$ are synchronized neighbors if and only if 
$$\forall t\geq 0\quad f(C_i)+g(C_i)\cdot 2\pi\cdot t=\phi_{ij} \Longleftrightarrow f(C_j)+g(C_j)\cdot 2\pi\cdot t=\phi_{ji}.$$
\end{remark}

Because of our model assumptions, the previous remark can be rewritten as follows:
\begin{remark}\label{rem:synchro2}
Let $G=(V,E)$ and $F=(f,g)$ be the communication graph and a schedule of a system of robots, respectively. Let $(i,j)$ be an edge of $E$, the robots flying in $C_i$ and $C_j$ are synchronized neighbors if and only if 
$$\displaystyle\exists t_0\in[0;1)\;\text{such that}\; f(C_i)+g(C_i)\cdot 2\pi\cdot t_0=\phi_{ij} \text{\; and\;} f(C_j)+g(C_j)\cdot 2\pi\cdot t_0=\phi_{ji},$$ moreover,
$$\forall k\in \mathbb{N}\quad f(C_i)+g(C_i)\cdot 2\pi\cdot (t_0+k)=\phi_{ij} \text{\quad and\quad} f(C_j)+g(C_j)\cdot 2\pi\cdot (t_0+k)=\phi_{ji}.$$
\end{remark}

\begin{definition}[Synchronization Schedule]\label{def:synchro_schedule}
Let $G=(V,E)$ and $F=(f,g)$ be the communication graph and a schedule of a system of robots, respectively. $F$ is a \emph{synchronization schedule} if the system is synchronized. 
%That is, $\forall (i,j)\in E$ it holds that
%$$\forall t\geq 0\quad f(C_i)+g(C_i)\cdot 2\pi\cdot t=\phi_{ij} \Longleftrightarrow f(C_j)+g(C_j)\cdot 2\pi\cdot t=\phi_{ji},$$
%or equivalently,
%$$\exists t_0\in[0;1)\text{\;such that\;} f(C_i)+g(C_i)\cdot 2\pi\cdot t_0=\phi_{ij} \text{\; and\;} f(C_j)+g(C_j)\cdot 2\pi\cdot t_0=\phi_{ji}.$$
\end{definition}

This definition can be formulated using the remarks \ref{rem:synchro1} and \ref{rem:synchro2} as follows:
\begin{remark}
Let $G=(V,E)$ and $F=(f,g)$ be the communication graph and a schedule of a system of robots, respectively. $F$ is a synchronization schedule if and only if $\forall (i,j)\in E$ it holds that
$$\forall t\geq 0\quad f(C_i)+g(C_i)\cdot 2\pi\cdot t=\phi_{ij} \Longleftrightarrow f(C_j)+g(C_j)\cdot 2\pi\cdot t=\phi_{ji},$$
or equivalently,
$$\exists t_0\in[0;1)\text{\;such that\;} f(C_i)+g(C_i)\cdot 2\pi\cdot t_0=\phi_{ij} \text{\; and \;} f(C_j)+g(C_j)\cdot 2\pi\cdot t_0=\phi_{ji}.$$
\end{remark}

We are interested in systems of robots where neighboring robots are flying in opposite directions due to this scheme provides certain degree of robustness (see Subsection III.C of \cite{dbanez2015icra}). Formally, let $G=(V,E)$ be the communication graph of a system of robots and let $F=(f,g)$ be a schedule on it, then $g(C_i)=-g(C_j)$ for all $(i,j)\in E$. Therefore, a schedule where every pair of neighbors are flying in opposite directions can be established only if the communication graph is bipartite.

The next results were proven in \cite{dbanez2015icra}:
\begin{lemma}[Lemma 3.2 of \cite{dbanez2015icra}]\label{lem:synchro}
Let $G=(V,E)$ be the communication graph of a system of robots.
Let $F=(f,g)$ be a schedule where every pair of neighbors are flying in opposite directions.
Let $(i,j)$ be an edge in $E$. The robots in $C_i$ and $C_j$ are synchronized neighbors if and only if $$f(C_j) = 2 \beta_{ij} - f(C_i) \pm \pi.$$
\end{lemma}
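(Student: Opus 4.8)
The plan is to reduce synchronization to a single scalar equation by eliminating the common meeting time $t_0$ that appears in Remark~\ref{rem:synchro2}. First I would fix an orientation: since the two neighbors fly in opposite directions, I may assume without loss of generality that $g(C_i)=1$ and $g(C_j)=-1$ (the reversed case is entirely symmetric and yields the same conclusion). Throughout, all positions are read modulo $2\pi$ in accordance with Remark~\ref{rem:positions_congruence}, and I will take $\beta_{ij}$ to be the link position $\phi_{ij}$, i.e.\ the angle of the direction from the center of $C_i$ toward the center of $C_j$, which is exactly the point of $C_i$ closest to $C_j$.

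By Remark~\ref{rem:synchro2}, the robots in $C_i$ and $C_j$ are synchronized neighbors if and only if there is a single $t_0\in[0;1)$ for which both robots occupy their link positions simultaneously, that is
\[
f(C_i)+2\pi t_0=\phi_{ij}\quad\text{and}\quad f(C_j)-2\pi t_0=\phi_{ji}\pmod{2\pi}.
\]
Solving each equation for $2\pi t_0$ gives $2\pi t_0=\phi_{ij}-f(C_i)$ and $2\pi t_0=f(C_j)-\phi_{ji}$, so synchronization forces $\phi_{ij}-f(C_i)\equiv f(C_j)-\phi_{ji}\pmod{2\pi}$, equivalently $f(C_j)=\phi_{ij}+\phi_{ji}-f(C_i)$. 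Substituting the earlier identity $\phi_{ji}=\phi_{ij}\pm\pi$ then collapses $\phi_{ij}+\phi_{ji}$ to $2\phi_{ij}\pm\pi$, yielding exactly $f(C_j)=2\beta_{ij}-f(C_i)\pm\pi$.

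For the converse I would run this computation backwards: assuming $f(C_j)=2\phi_{ij}-f(C_i)\pm\pi$ and using $\phi_{ji}=\phi_{ij}\pm\pi$, I recover the congruence $\phi_{ij}-f(C_i)\equiv f(C_j)-\phi_{ji}\pmod{2\pi}$, and then define $t_0$ to be the unique representative of $(\phi_{ij}-f(C_i))/(2\pi)$ lying in $[0;1)$. One checks directly that this $t_0$ satisfies both position equations above, so Remark~\ref{rem:synchro2} certifies that the pair is synchronized. The main obstacle is bookkeeping rather than conceptual: I must track the mod-$2\pi$ reductions carefully so that the two expressions for $2\pi t_0$ are genuinely compared as angles, I must verify that the $\pm\pi$ ambiguity coming from $\phi_{ji}=\phi_{ij}\pm\pi$ is consistent with the $\pm\pi$ in the claimed formula (the two signs are not independent, since fixing the geometric choice of $\phi_{ji}$ fixes the sign), and finally that a valid meeting time $t_0\in[0;1)$ always exists in the converse direction.
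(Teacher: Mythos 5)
The paper never actually proves this lemma: it is quoted as Lemma 3.2 of \cite{dbanez2015icra}, and only the statement (together with the definition of $\beta_{ij}$ as the angle of the supporting line of the edge) appears here, so there is no internal proof to compare against. Judged on its own, your derivation is correct, and eliminating the common meeting time $t_0$ from the two link-position equations of Remark~\ref{rem:synchro2} is exactly the right mechanism: with $g(C_i)=1$, $g(C_j)=-1$, the congruences $2\pi t_0\equiv\phi_{ij}-f(C_i)$ and $2\pi t_0\equiv f(C_j)-\phi_{ji}\pmod{2\pi}$ admit a common solution $t_0\in[0;1)$ precisely when $f(C_j)\equiv\phi_{ij}+\phi_{ji}-f(C_i)\pmod{2\pi}$, and substituting $\phi_{ji}=\phi_{ij}\pm\pi$ gives the stated formula; your converse construction of $t_0$ is also sound. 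Two clarifications would tighten it. First, your identification of $\beta_{ij}$ with $\phi_{ij}$ deserves one explicit line: the paper's $\beta_{ij}$ is the angle of an (unoriented) supporting line, so it agrees with $\phi_{ij}$ only modulo $\pi$; this discrepancy is harmless precisely because the formula involves $2\beta_{ij}$ and $2\phi_{ij}\equiv 2\beta_{ij}\pmod{2\pi}$, but without saying so the substitution looks like it could flip the answer by $2\pi\cdot\frac{1}{2}$. Second, the ``consistency of the two $\pm$ signs'' that you list as a remaining obstacle is a non-issue: positions are read modulo $2\pi$ (Remark~\ref{rem:positions_congruence}), so $+\pi$ and $-\pi$ denote the same angle and the $\pm$ in both the identity $\phi_{ji}=\phi_{ij}\pm\pi$ and the lemma's formula is purely notational; no sign bookkeeping is required. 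With those two observations made explicit, your argument is a complete and self-contained proof of the imported lemma.
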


$\beta_{ij}$ is the angle measured from the positive horizontal axis of the supporting line of the edge $(i,j)$, see Figure~\ref{fig:proof-diff-dir}. 

\begin{figure}[h]
\centering
\includegraphics[scale=.7]{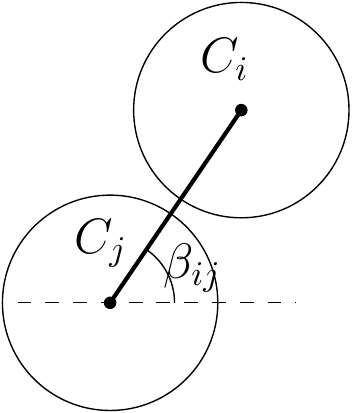}
\caption{Angle of the edge $(i,j)$ measured from the positive horizontal axis.}
\label{fig:proof-diff-dir}
\end{figure}

We deduced the following result from Lemma~\ref{lem:synchro}: 

\begin{corollary}\label{cor:adj_opp}
Let $G=(V,E)$ be the communication graph of a system of robots.
Let $F=(f,g)$ be a schedule where every pair of neighbors are flying in opposite directions.
The system is synchronized if and only if 
$$f(C_j) = 2 \beta_{ij} - f(C_i) \pm \pi \quad \forall (i,j)\in E.$$
\end{corollary}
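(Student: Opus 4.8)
The plan is to derive the corollary directly from Lemma~\ref{lem:synchro} by quantifying over all edges. First I would recall the defining condition of a synchronized system: by the Definition of Synchronized Neighbors, the number of synchronized pairs never exceeds $|E|$, and the system is synchronized precisely when this number equals $|E|$. Since distinct edges correspond to distinct neighboring pairs, attaining the value $|E|$ is equivalent to requiring that \emph{every} edge $(i,j)\in E$ be a synchronized pair. Thus ``the system is synchronized'' is, by definition, nothing but the conjunction over all edges of the single-edge synchronization condition.

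Next I would apply Lemma~\ref{lem:synchro} edgewise. The hypothesis that $F=(f,g)$ is a schedule in which every pair of neighbors flies in opposite directions is a global property of the schedule, hence holds in particular at each individual edge, so the lemma is applicable to every $(i,j)\in E$. For a fixed edge the lemma gives the equivalence: the robots in $C_i$ and $C_j$ are synchronized if and only if $f(C_j)=2\beta_{ij}-f(C_i)\pm\pi$. Substituting this equivalence into the conjunction from the previous paragraph yields exactly the claimed statement, namely that the system is synchronized if and only if $f(C_j)=2\beta_{ij}-f(C_i)\pm\pi$ for all $(i,j)\in E$. In effect the corollary is just Lemma~\ref{lem:synchro} applied simultaneously to all edges.

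The only point requiring a moment of care — and the closest thing to an obstacle — is checking that the per-edge condition is well posed, i.e.\ independent of the order in which the two endpoints of an edge are listed. Writing the condition for the reversed pair gives $f(C_i)=2\beta_{ji}-f(C_j)\pm\pi$, and I would verify that this is equivalent to $f(C_j)=2\beta_{ij}-f(C_i)\pm\pi$. This holds because $\beta_{ij}$ is the angle of the (unoriented) supporting line of the edge, so $\beta_{ji}$ and $\beta_{ij}$ differ by a multiple of $\pi$, and because positions are read modulo $2\pi$ by Remark~\ref{rem:positions_congruence} while the ambiguous sign $\pm\pi$ absorbs the remaining discrepancy. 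Hence the quantifier ``$\forall (i,j)\in E$'' is unambiguous, and no genuine difficulty arises beyond this bookkeeping; the result is essentially a reformulation of the lemma in terms of the whole edge set.
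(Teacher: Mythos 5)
Your proof is correct and follows exactly the route the paper intends: the paper simply states that the corollary is ``deduced'' from Lemma~\ref{lem:synchro}, which is precisely your edgewise application of the lemma combined with the definition of a synchronized system as one in which all $|E|$ neighboring pairs are synchronized. Your extra check that the condition is independent of the orientation of each edge is harmless bookkeeping the paper leaves implicit, not a different approach.
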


The following theorem provides necessary and sufficient conditions for the existence of a schedule that synchronizes a system of robots with every pair of neighbors flying in opposite directions.

\begin{theorem}[Theorem 3.2 of \cite{dbanez2015icra}]\label{thm:synchro}
Let $G=(V,E)$ be the communication graph of a system of robots.
%Suppose that $G$ is bipartite and that every pair of neighbors are scheduled to fly in opposite directions. 
Then there exists a synchronization schedule for the system with every pair of neighbors flying in opposite directions if and only if $G$ is bipartite and every cycle
$\langle C_{i_1},C_{i_2},\ldots,C_{i_{2k}},C_{i_1}\rangle$ in $G$  %$(i_1,i_2),(i_2,i_3)\dots,(i_{2k-1},i_{2k}),(i_{2k},i_1)$ in $G$
satisfy:
$$\beta_{i_1i_2}-\beta_{i_2i_3}+\beta_{i_3i_4}-\beta_{i_4i_5}+\dots+\beta_{i_{2k-1}i_{2k}}-\beta_{i_{2k}i_1}=2m\pi,\quad m\in\mathbb{N}.$$
\end{theorem}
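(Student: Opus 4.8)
The plan is to recast the existence of the desired schedule as the solvability of a linear system over the circle group $\mathbb{R}/2\pi\mathbb{Z}$, after which the two stated conditions appear as the familiar bipartiteness and cycle-consistency requirements for such systems. By Corollary~\ref{cor:adj_opp}, a schedule $F=(f,g)$ in which adjacent robots travel in opposite directions synchronizes the system if and only if (i) $g$ assigns opposite directions across every edge, and (ii) $f$ satisfies, for each $(i,j)\in E$,
\[
f(C_i)+f(C_j)\equiv 2\beta_{ij}+\pi \pmod{2\pi}.
\]
Requirement (i) says exactly that $g\colon T\to\{1,-1\}$ is a proper $2$-colouring of $G$, so an admissible $g$ exists if and only if $G$ is bipartite. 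This already isolates the bipartiteness clause, and from here on I would fix a bipartition $V=A\cup B$ and treat each connected component separately.

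To analyse the position constraints (ii) I would linearise them. Substituting $\tilde f(C_v)=f(C_v)$ for $v\in A$ and $\tilde f(C_v)=-f(C_v)$ for $v\in B$ converts every sum-constraint into a difference-constraint $\tilde f(C_a)-\tilde f(C_b)\equiv 2\beta_{ab}+\pi\pmod{2\pi}$ with $a\in A$, $b\in B$. A difference system of this shape on a connected graph is a standard potential problem over $\mathbb{R}/2\pi\mathbb{Z}$: one fixes a spanning tree, chooses the value at the root freely, and propagates uniquely along tree edges, so the tree edges impose no constraint at all. The entire obstruction to solvability therefore comes from the non-tree edges, each of which must close consistently around the fundamental cycle it determines.

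For necessity I would take an arbitrary (necessarily even) cycle $\langle C_{i_1},\dots,C_{i_{2k}},C_{i_1}\rangle$ and form the alternating combination of the $2k$ constraints in (ii). The $f$-terms telescope and cancel completely, the $2k$ copies of $\pi$ cancel in pairs, and what survives is precisely the displayed identity on the alternating sum of the $\beta_{ij}$. For sufficiency I would run the tree-propagation above to build $f$ on a spanning tree; invoking the cycle hypothesis on each fundamental cycle certifies that the remaining edges satisfy (ii) automatically, and combining this $f$ with the bipartite direction assignment $g$ yields a genuine synchronization schedule. Since the cycle space is generated by the fundamental cycles and the conditions are additive modulo $2\pi$, checking every cycle is equivalent to checking this basis, so the two directions together give the claimed equivalence.

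The step I expect to be most delicate is the modular bookkeeping in the alternating cycle sum: one has to track carefully how the factor $2\beta_{ij}$ and the constant $\pm\pi$ behave under reduction modulo $2\pi$, confirm that the $\pi$-terms annihilate, and check that the surviving alternating $\beta$-sum collapses to exactly the congruence in the statement. The reduction of ``every cycle'' to a cycle basis is routine once additivity modulo $2\pi$ is in hand, but getting the constants to land on the stated identity is where the real care is needed.
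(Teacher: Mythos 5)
Your overall strategy---using Corollary~\ref{cor:adj_opp} to turn existence of the schedule into solvability of the edge constraints $f(C_i)+f(C_j)\equiv 2\beta_{ij}+\pi\pmod{2\pi}$, extracting bipartiteness from the direction assignment, linearising via the bipartition, and settling solvability by spanning-tree propagation plus consistency on a cycle basis---is sound and is the natural route to a statement of this kind. Note that the paper itself contains no proof to compare against: Theorem~\ref{thm:synchro} is imported verbatim from the cited reference, so your argument has to stand on its own.

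It does not, and the failure is exactly at the step you flagged as delicate. Taking the alternating sum of the $2k$ edge constraints around a cycle, the $f$-terms telescope and the $k$ positive and $k$ negative copies of $\pi$ cancel, leaving
\begin{equation*}
2\left(\beta_{i_1i_2}-\beta_{i_2i_3}+\cdots+\beta_{i_{2k-1}i_{2k}}-\beta_{i_{2k}i_1}\right)\equiv 0 \pmod{2\pi},
\end{equation*}
which says the alternating $\beta$-sum is a multiple of $\pi$, \emph{not} of $2\pi$: a congruence modulo $2\pi$ cannot be divided by $2$ without weakening the modulus to $\pi$. Since your sufficiency direction runs through the same identity, what your argument actually establishes (correctly) is the equivalence with the condition ``alternating sum $\in\pi\mathbb{Z}$'', which is strictly weaker than the displayed ``$=2m\pi$''. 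The discrepancy is not bookkeeping. Take four unit circles centred at the corners of an axis-aligned square, whose communication graph is the $4$-cycle of Figure~\ref{fig:synchro_sample}: with $\beta_{12}=0$, $\beta_{23}=\pi/2$, $\beta_{34}=\pi$, $\beta_{41}=3\pi/2$ the alternating sum is $-\pi$ (an odd multiple of $\pi$ under any choice of representatives), yet the system is synchronizable, e.g.\ $f=(0,\pi,\pi,0)$ with $g=(1,-1,1,-1)$ satisfies every edge constraint. So no amount of careful modular bookkeeping will make your computation land on the congruence as printed; your proof proves the variant with $m\pi$, $m\in\mathbb{Z}$, in place of $2m\pi$, $m\in\mathbb{N}$, and the claim in your sketch that ``what survives is precisely the displayed identity'' is false. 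A related point you should also make explicit: since $\beta_{ji}=\beta_{ij}\pm\pi$, each term of the cycle sum is only well defined modulo $\pi$ unless edge orientations along the cycle are fixed, which again is coherent only with the mod-$\pi$ form of the condition.
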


Now, let us consider the case in which a robot leaves the system and then the assigned task is interrupted. In this case, in order to keep the performance of all the tasks maintaining the assigned trajectories, the following simple strategy is proposed in \cite{dbanez2015icra}. Let $(T,U,r)$ be a system of robots using a synchronization schedule $F=(f,g)$ under the conditions of Theorem~\ref{thm:synchro}. Let $u$ be a robot flying in a trajectory $C_i\in T$. If $u$ arrives to the link position $\phi_{ij}$ and the neighbor in $C_j$ has left the system, then $u$ moves to $C_j$ and follows the direction $g(C_j)$ (movement direction assigned to $C_j$, see Figure~\ref{fig:ch_routes}).

In order to keep the synchronization scheme, it is assumed that a robot can accelerate when it is passing to another trajectory such that the time taken to pass from a trajectory to another is negligible. Therefore, in the studied model it is assumed the following statement:

\begin{statement}\label{stm:swapping}
In a system of robots using a synchronization schedule, if a robot $u$, flying in $C_i$, arrives to the link position $\phi_{ij}$ at time $t$ and the neighbor in $C_j$ has left the system, then $u$ passes to $C_j$ instantly and follows the schedule assigned to $C_j$. That is, for any small time interval $\varepsilon>0$, at time $t+\varepsilon$, the robot $u$ lies in $C_j$ at the position  $$f(C_j)+g(C_j)\cdot 2\pi \cdot (t+\varepsilon).$$ 
\end{statement}

The following result is proven in \cite{dbanez2015icra}: 

\begin{theorem}[Theorem 3.3 of \cite{dbanez2015icra}]\label{thm:no_2r_in_traject}
In a synchronized system of robots under the previous statement, each trajectory is occupied at most by one robot.
\end{theorem}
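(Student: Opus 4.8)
The plan is to track, for every surviving robot and every time $t$, which trajectory currently hosts it, and to prove that this assignment is always injective; since there are $n$ trajectories and injectivity means no two robots share one, the claim follows. The engine of the argument is that, by the schedule definition together with Statement~\ref{stm:swapping}, a robot hosted by $C_k$ at time $t$ occupies exactly the \emph{canonical position} $P_k(t) := f(C_k) + g(C_k)\cdot 2\pi t$ of that trajectory, whether it started there or reached it by swapping. Thus the entire state is described by the set of \emph{occupied} trajectories, and occupancy changes only at the discrete instants when a swap occurs.

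I would argue by induction on these swap instants. At $t=0$ the surviving robots occupy distinct trajectories (one robot per circle), so the assignment is injective. For the inductive step, suppose injectivity holds just before a swap in which a robot $u$ on $C_i$ reaches the link position $\phi_{ij}$ and finds no neighbor, hence moves to $C_j$. This removes $i$ and inserts $j$ into the occupied set, so injectivity is preserved precisely when $C_j$ was unoccupied at the swap instant. Here I would invoke synchronization: by Corollary~\ref{cor:adj_opp}, the instant $t$ at which $P_i(t) = \phi_{ij}$ is exactly an instant at which $P_j(t) = \phi_{ji}$. Consequently, if some robot $w$ did occupy $C_j$, it would sit at $P_j(t) = \phi_{ji}$, i.e.\ at the link position facing $u$, and $u$ would detect it. Therefore ``$u$ detects no neighbor'' is equivalent to ``$C_j$ is unoccupied,'' so the swap indeed lands $u$ in an empty trajectory and injectivity is maintained.

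The step I expect to be the main obstacle is ruling out two \emph{simultaneous} swaps into the same empty trajectory $C_j$, say one from $C_i$ and one from $C_k$. Such a coincidence would require $P_j(t) = \phi_{ji}$ and $P_j(t) = \phi_{jk}$ at the same instant, that is $\phi_{ji} \equiv \phi_{jk} \pmod{2\pi}$, meaning two distinct neighbors of $C_j$ share the same closest point on $C_j$. I would exclude this under a general-position assumption on the circle centers (distinct neighbors determine distinct link positions on $C_j$); alternatively one may break ties by an arbitrary fixed priority, letting one robot swap in and forcing the other to remain on its circle, which again keeps the assignment injective. Combining the base case, the inductive step, and this simultaneity analysis shows that the occupied-trajectory assignment stays injective for all $t\ge 0$, which is exactly the assertion that each trajectory is occupied by at most one robot.
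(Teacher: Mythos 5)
You cannot actually be compared against an in-paper argument here: the paper states Theorem~\ref{thm:no_2r_in_traject} as ``Theorem 3.3 of \cite{dbanez2015icra}'' and imports it without proof, so your proposal can only be judged on its own merits. On those merits the core of your argument is sound and is the natural one: the invariant that a robot hosted by $C_k$ always sits at the canonical position $f(C_k)+g(C_k)\cdot 2\pi t$ (guaranteed by the schedule together with Statement~\ref{stm:swapping}), the equivalence ``$u$ at $\phi_{ij}$ detects no neighbor $\Longleftrightarrow$ $C_j$ is unoccupied'' (from synchronization, via Corollary~\ref{cor:adj_opp}), and induction over the discrete set of swap instants together yield exactly the injectivity of the robot-to-trajectory assignment.

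The genuine weak point is your handling of simultaneous swaps into the same trajectory. You discharge it by invoking a general-position assumption on the centers, or alternatively a tie-breaking priority; neither is part of the model, so as written you prove a conditional version of an unconditional theorem. The extra hypothesis is in fact unnecessary, because the degenerate configuration $\phi_{ji}=\phi_{jk} \pmod{2\pi}$ is already impossible in an SCS. Indeed, $\phi_{ji}=\phi_{jk}$ forces the centers of $C_i$ and $C_k$ to lie on a single ray emanating from the center of $C_j$, at distances $d_i<d_k$, say. Since the unit circles are pairwise disjoint, $d_i>2$, and since both are neighbors of $C_j$, $d_k\le 2+r$; hence the distance between the centers of $C_i$ and $C_k$ is $d_k-d_i<(2+r)-2=r\le 2+r$, so $(i,k)\in E$ and $\{C_i,C_j,C_k\}$ spans a triangle in $G$. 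A triangle is an odd cycle, contradicting the bipartiteness that Theorem~\ref{thm:synchro} requires of every SCS (neighbors must fly in opposite directions). Replacing your general-position caveat by this observation closes the gap and makes your proof complete within the paper's own assumptions; the same argument also rules out the related edge case in which one robot vacates $C_i$ at the very instant another robot attempts to swap into it.
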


From now on, we assume a system of robots whose communication graph is connected. Otherwise, we study every connected component separately. Also, we are assuming that the communication graph fulfills Theorem~\ref{thm:synchro}, if it does not, we extract the maximum subgraph fulfilling the synchronization conditions of Theorem~\ref{thm:synchro}.

%To reduce notation we have condensed the exposed results in the previous chapter in the following definition:
\begin{definition}[Synchronized Communication System (SCS)] A \emph{synchronized communication system (SCS)} is a synchronized system of robots with neighbors flying in opposite directions under the Statement~\ref{stm:swapping}.
\end{definition}

With above definitions and results we are ready to formally introduce the resilience problem in the next section. 

%\newpage\null

% % % % % % % % % % % % % % % % % % % % % % % % % % % % % % % % % %
% CHAPTER 
% % % % % % % % % % % % % % % % % % % % % % % % % % % % % % % % % %
%\section{Synchronized System in ``starvation''}\label{ch:SCS}
%\subsection{Problem statement}
%To reduce notation, we call \emph{Synchronized Communication System (SCS)} to a synchronized system of robots using a schedule where every pair of neighbors is flying in opposite directions under the Statement \ref{stm:swapping}.

\section{Resilience Measures} \label{ch:res}

We focus on the study of a SCS where some robots leave the monitoring group of robots (could be at different times). We do not assume that all the remaining robots are starving, but we assume that the remaining robots do not leave the system. We call this system a \emph{partial SCS}. Note that the notion of partial SCS generalizes the notion SCS since the number of leaving robots could be zero.
%We focus on the study of a SCS where a set of robots has left the team, assuming that no more robots leave the system. %Let $(T,U,r)$ be a system of robots, for simplicity, assume that initially the robot $u_i$ is in the trajectory $C_i$ and when the system starts a set (empty or not) of robots already gone and the remaining robots do not leave the system ever. Therefore some trajectories are empty at the beginning.

%\begin{definition}[Synchronized Communication System (SCS)]
%Let $(T,U,r)$ be a system of robots under the Statement \ref{stm:swapping}. Let $G=(V,E)$ be the communication graph %(fulfilling the synchronization conditions of Theorem~\ref{thm:synchro}) 
%and let $F=(f,g)$ be a synchronization schedule. % such that $g(C_i)=-g(C_j)\; \forall (i,j)\in E$. 
%A \emph{synchronized communication system (SCS)} is a quintuple $(T,Q,r,G,F)$ where $Q\subseteq U$. IF $Q=U$, we say that the SCS is complete.
%\end{definition}
First, we introduce the concept of \emph{starvation}.
\begin{definition}[Starvation]
In a partial SCS, we say that a robot $u$ \emph{starves} if for all $ (i,j)\in E$ every time that $u$ arrives to $\phi_{ij}$ the circle $C_j$ is empty. We say that the system is in \emph{starvation state} if all the robots starve.
\end{definition}

See Figure~\ref{fig:starv_samples}. In the shown samples if the white robots leave the team then the system falls in starvation state. 

\begin{figure}[ht]
\centering
\begin{subfigure}{0.3\textwidth}
\centering
\includegraphics[page=1]{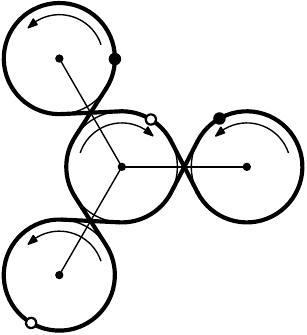}
\caption{}
\label{fig:tree_bound1}
\end{subfigure}%
\begin{subfigure}{0.3\textwidth}
\centering
\includegraphics[page=2]{starvation.pdf}
\caption{}
\label{fig:tree_bound2}
\end{subfigure} %
\begin{subfigure}{0.3\textwidth}
\centering
\includegraphics[page=3]{starvation.pdf}
\caption{}
\label{fig:tree_bound3}
\end{subfigure} 

\begin{subfigure}{0.49\textwidth}
\centering
\includegraphics[page=5]{starvation.pdf}
\caption{}
\label{fig:no_tree_bound1}
\end{subfigure} %
\begin{subfigure}{0.49\textwidth}
\centering
\includegraphics[page=4]{starvation.pdf}
\caption{}
\label{fig:no_tree_bound2}
\end{subfigure}
\caption{In these examples if the robots represented with non-solid points leave the team then the system falls in starvation state. The paths of the surviving robots, solid points, are represented using fat stroke.}
\label{fig:starv_samples}
\end{figure}

%\begin{definition}[Starvation-Generator]
%Let $(T,U,r)$ be a system of robots, let $G=(V,E)$ be the communication graph %fulfilling the synchronization conditions of Theorem~\ref{thm:synchro} 
%and let $A\subset U$ be a non-empty set of robots. We say that $A$ is a \emph{starvation-generator} set if exists a synchronization schedule $F=(f,g)$ such that the SCS $(T,U\setminus A,r, G,F)$ is in starvation.
%\end{definition}

%\begin{definition}[Starvation-Number]
%In a SCS, the \emph{starvation-number} of the system is the maximum number of starving robots admitted by the system.
%\end{definition}

\begin{definition}[Covered trajectory]
In a partial SCS, we say that a trajectory is \emph{covered} if all the points forming it are visited periodically. Analogously, a trajectory is \emph{uncovered} if it has a section that is not visited by the surviving robots.
\end{definition}

Note that a trajectory is covered if it is surrounded by the paths of the surviving robots, see the first three samples in Figure~\ref{fig:starv_samples}. All the trajectories in the samples of figures \ref{fig:no_tree_bound1}, \ref{fig:no_tree_bound2} and \ref{fig:starvation_intro} are uncovered.

\begin{definition}[Isolation-Resilience]
In a SCS, the \emph{isolation-resilience} of the system, is the maximum number of robots that can fail without resulting in starvation of the system.
\end{definition}

%From the last two definitions we deduce:
%\begin{remark}
%Let $(T,U,r)$ be a system of robots, where $|T|=|U|=n$ and let $G=(V,E)$ be the communication graph. % fulfilling the synchronization conditions of Theorem~\ref{thm:synchro}. 
%It holds that:
%$$\starv(G) + \IsolRes(G) = n - 1.$$
%\end{remark}

%The problem we address in this paper is:
%\begin{problem}[Resilience Problem]\label{pbm:resilience}
%Given a SCS, compute the resilience of the system.
%\end{problem}

\begin{definition}[Uncovering-Resilience]
In a SCS, the \emph{uncovering-resilience} of the system, is the maximum number of robots that can fail such that all trajectories are covered by the surviving robots.
\end{definition}

Clearly, the isolation-resilience (uncovering-resilience) with $n>1$ robots is an integer in $[0,n-2]$ ($[0,n-1]$).

In this paper, we explore properties of both tolerance measures and show their values for some special configurations.

%============================================
\section{Rings and Partial SCS} \label{ch:rings}
In this section we introduce a notion of a \emph{ring} and show some properties of rings. First, we study the motion of a starving robot. 

\subsection{Starving Robots and Rings}
%In this section we study the motion of a starving robot. 
A starving robot in a system is always passing from a trajectory to another at link positions. 
The analysis of motion of starving robots in a SCS will help to study the resilience of the system.
Figure~\ref{fig:ring_sample} shows examples of the motion of starving robots. 

\begin{figure}[htb]
\centering
\includegraphics{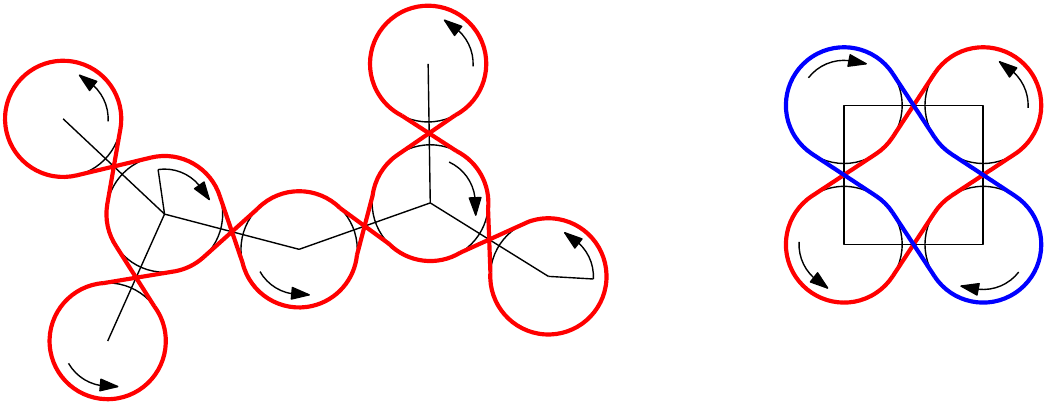}
\caption{The motion of a starving robot (a) in a tree and (b) in a cycle graph. Notice that the path of the robot in (a) covers all the trajectories whereas, in (b), there are two different motions of a starving robot.}
\label{fig:ring_sample}
\end{figure}

We first introduce an auxiliary graph called the \emph{starving-motion graph} to model the motion of a starving robot. 

\begin{definition}[Starving-motion graph (SMG)]
Let $T$ be a set of trajectories, let $G=(V,E)$ be a communication graph on $T$ and let $g$ be a direction assignment such that $g(C_i)=-g(C_j)$ for any $(i,j)\in E$. We construct the \emph{starving-motion graph (SMG)} as follows:

%applying the following steps: %starting with an empty directed graph:
\begin{enumerate}
\item for each trajectory $C_i\in T$ enumerate the link points in clockwise order,
\item for each link position, add two nodes to the SMG and label them by $v^{(i)}_k$ and $w^{(i)}_k$, where $i$ is the index of the trajectory and $k$ the corresponding index in the previous numeration,
\item if $C_i$ has $m$ link positions, then 
\begin{itemize}
\item if $g(C_i)=1$ add an edge $\left(w^{(i)}_k,v^{(i)}_{k-1}\right)$ for all $1< k \leq m$, and add the edge $\left(w^{(i)}_1,v^{(i)}_m\right)$,
\item if $g(C_i)=-1$ add an edge $\left(v^{(i)}_k,w^{(i)}_{k+1}\right)$ for all $1\leq k < m$, and add the edge $\left(v^{(i)}_m,w^{(i)}_1\right)$, 
\end{itemize}
\item and finally, for any edge $(i,j)\in E$ between the $k$-th link position in $C_i$ and the $l$-th link position in $C_j$, 
\begin{itemize}
\item if $g(C_i)=1$, add the edges $\left(v^{(i)}_k,v^{(j)}_l\right)$ and $\left(w^{(j)}_l,w^{(i)}_k\right)$,
\item if $g(C_i)=-1$, add the edges $\left(v^{(j)}_l,v^{(i)}_k\right)$ and $\left(w^{(i)}_k,w^{(j)}_l\right)$.
\end{itemize}

\end{enumerate}

\end{definition}

\begin{figure}[h]
\begin{subfigure}[b]{.4\textwidth}
\centering
\includegraphics[scale=0.4]{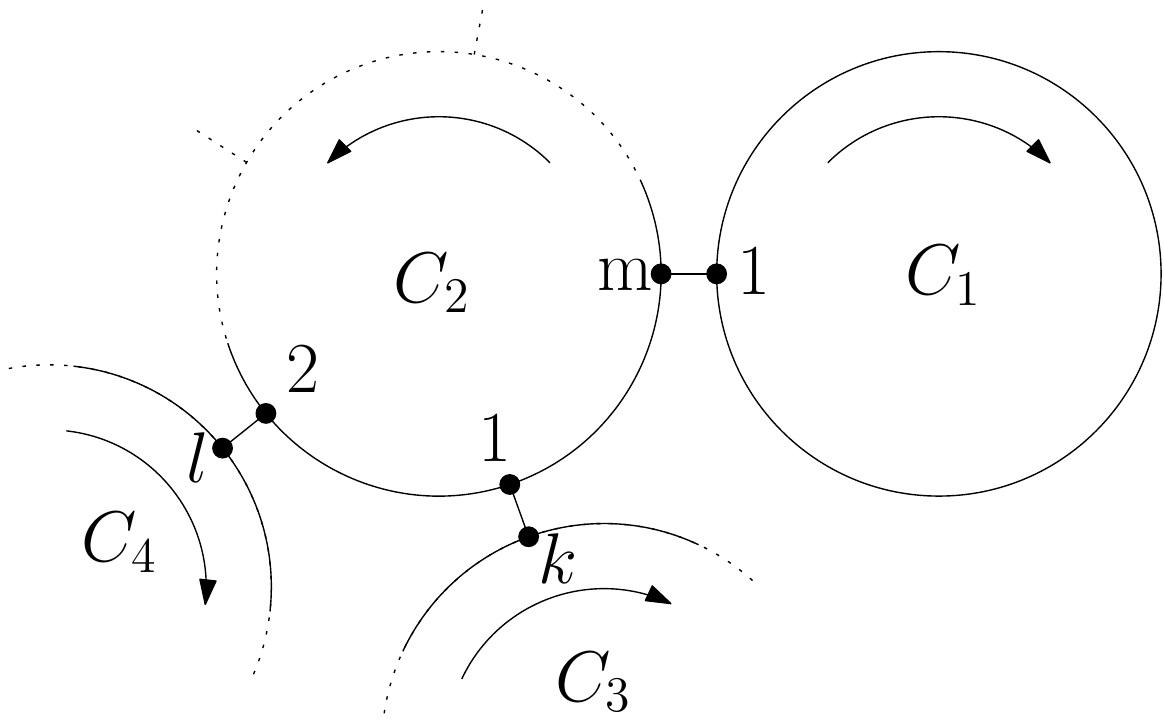}
\caption{}
\end{subfigure}%
\begin{subfigure}[b]{.6\textwidth}
\centering
\includegraphics[scale=0.8, page=2]{rings_characterization.pdf}
\caption{}
\end{subfigure}
\caption{A section of $G$ (a) and the corresponding section in the SMG (b).}
\label{fig:linked_graph}
\end{figure}

Figure~\ref{fig:linked_graph} illustrates the SMG construction. Note that the in-degree and the out-degree of every node in the SMG is exactly 1.

Let $H$ be the SMG of a SCS. Suppose that after some robots have left the team, a robot $u$ starves. The motion of $u$ in the resultant system can be modeled by a path of nodes in $H$. Suppose that the robot $u$ starts to fly in $C_i$ and $g(C_i)=1$ (resp. $g(C_i)=-1$). Obviously it will visit a link position, that is a $v$-node (resp. $w$-node) in $H$, from here, $u$ crosses to a link position in other trajectory $C_j$ where $g(C_j)=-1$ (resp. $g(C_j)=1$), that is, it passes through an $(v,v)$-edge (resp. $(w,w)$-edge) in $H$ to a $v$-node (resp. $w$-node). From here, following the direction of motion, the robot reaches other link position in $C_j$ (can be the same if $C_j$ has just one link position). From the last visited $v$-node (resp. $w$-node) using a $(v,w)$-edge (resp. $(w,v)$-node), it passes to a $w$-node (resp. $v$-node), and so on. Thus, the motion of a starving robot in a system can be represented as a sequence of nodes in the corresponding SMG.\\

%Here we present some basic results on rings.

\begin{lemma}\label{lem:closed_rings}
The motion of a starving robot always follows a closed path.
\end{lemma}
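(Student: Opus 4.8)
The plan is to exploit the key structural fact noted just after the SMG construction: every node in the SMG $H$ has in-degree exactly $1$ and out-degree exactly $1$. First I would argue that the motion of a starving robot corresponds exactly to following edges in $H$, as described in the paragraph preceding the lemma: each crossing to an empty neighboring trajectory is a $(v,v)$- or $(w,w)$-edge, and each arc of travel within a trajectory to its next link position is a $(v,w)$- or $(w,v)$-edge. Because the robot starves, by definition it never meets another robot, so at every link position it reaches it always finds the neighboring trajectory empty and must cross; hence its trajectory never ``terminates'' and is an infinite walk $v_0, v_1, v_2, \dots$ in $H$ where each $v_{t+1}$ is the unique out-neighbor of $v_t$.

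The heart of the argument is then a short finiteness-and-uniqueness observation. Since $H$ has finitely many nodes (the total number of link positions is finite) and the walk is infinite, by the pigeonhole principle some node must repeat: there exist indices $s < t$ with $v_s = v_t$. I would then use the out-degree$\,=1$ property to propagate this equality forward: if $v_s = v_t$ then $v_{s+1} = v_{t+1}$ since both are \emph{the} unique successor, and inductively the walk is periodic with period dividing $t-s$ from step $s$ onward. To conclude that the path is closed from the very \emph{start} (not merely eventually periodic), I would invoke the in-degree$\,=1$ property and run the implication backwards: $v_s = v_t$ forces $v_{s-1} = v_{t-1}$, since each has a unique predecessor, and repeating this down to index $0$ shows $v_0 = v_{t-s}$. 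Therefore the robot returns to its initial node, and its motion is a genuine closed path (a directed cycle in $H$).

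I would phrase the backward step carefully, since it is the one place where a subtlety could hide: the deduction $v_s = v_t \Rightarrow v_{s-1} = v_{t-1}$ relies on the fact that \emph{both} $v_{s-1}$ and $v_{t-1}$ are predecessors of the common node $v_s = v_t$, and in-degree $1$ then forces them to coincide. This is clean precisely because both in-degree and out-degree are exactly $1$ everywhere, a property I would re-derive or cite from the remark following the SMG definition. The main obstacle I anticipate is not the graph-theoretic core, which is essentially the standard fact that a functional graph with every in-degree equal to $1$ decomposes into disjoint cycles, but rather the bookkeeping of translating the physical motion of the starving robot faithfully into the edge-following walk in $H$ — in particular making sure that ``$u$ starves'' guarantees a crossing (never a meeting) at \emph{every} visited link position, so that the walk truly follows $H$'s unique out-edges at each step and no case is lost. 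Once that correspondence is nailed down, the cyclic structure of $H$ delivers the closed path immediately.
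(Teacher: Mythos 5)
Your proof is correct and follows essentially the same approach as the paper: the paper's proof simply observes that, since every node of the SMG has in-degree and out-degree one, the SMG is a union of directed cycles and a starving robot follows one of them, which is exactly the functional-graph fact you prove in detail via pigeonhole and the in/out-degree-one propagation. Your careful unpacking (including the backward in-degree step ruling out a ``rho-shaped'' eventually-periodic walk) matches the paper's argument; no gap.
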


\begin{proof}
Note that the SMG is the union of directed cycles since
every node has both in-degree and out-degree one. 
A starving robot follows one of the cycles in the SMG.
\comment{
Let $H$ be the SMG corresponding to the system. Let $x_1,x_2,\dots$ be the infinite sequence of nodes in $H$ that represents the path of a starving robot moving indefinitely in the system. Since $H$ has two nodes for each link position (the number of link positions is finite), this sequence repeats nodes. Let $x_i$ be the first repeated node in the sequence, that is, $x_i$ is the first node such that there exists $x_j=x_i$ with $j>i$. Suppose $i>1$, using that the in-degree of $x_i$ is 1, we have that $x_{i-1}=x_{j-1}$, in fact, a contradiction is obtained, $x_i$ is not the first repeated node. From here we have that $i=1$. Using that the out-degree of every node in $H$ is 1, we have that $x_2=x_{j+1}$, $x_3=x_{j+2}$, and so on. We deduce that the sequence is of the form $x_1,\dots,x_k,x_1,\dots,x_k,x_1,\dots$, therefore, a ring is a closed path.
}
\end{proof}

\begin{definition}[Ring]
A \emph{ring} is the closed path in the plane corresponding to the motion of a starving robot in a system. 
\end{definition}

We have the following remarks:
\begin{remark}\label{rem:ring_trajectory_decomposition}
Every arc (between two consecutive link points) of a trajectory $C_i$ belongs to a ring.
\end{remark}

\begin{remark}\label{rem:one_direction}
Every ring has a unique direction (determined by the direction of corresponding trajectories).
\end{remark}

\begin{remark}\label{rem:ring_sequence}
A ring can be represented by a cycle in the corresponding SMG.
\end{remark}

Two \emph{rings are disjoint} if they do not use any arc of positive length in any trajectory. That is, two disjoint rings may only intersect at link positions. 
The remarks above and the fact that the SMG is the union of directed cycles imply the following lemma. 

\begin{lemma}\label{lem:disjoint_rings}
Let $T$ be a set of trajectories, let $G=(V,E)$ be a communication graph on $T$ and let $g$ be an assignation of motion directions such that $g(C_i)=-g(C_j)$ for all $(i,j)\in E$. Then
\begin{itemize} 
\item
The union of all trajectories in $T$ is a collection of pairwise disjoint rings for $(T,G,g)$.
\item 
The SMG for $(T,G,g)$ is the union of edge-disjoint cycles.
\item The set of rings for $(T,G,g)$ is in the one-to-one correspondence with the set cycles in the SMG for $(T,G,g)$.
\end{itemize}
\end{lemma}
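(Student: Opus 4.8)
The plan is to derive all three bullets from the single structural fact already highlighted right after the SMG definition: every node of the SMG has in-degree and out-degree exactly $1$. I would begin by recalling the elementary graph-theoretic consequence of this fact. In a finite digraph in which every vertex has out-degree $1$, following the unique out-edge from any vertex must eventually revisit a vertex; and since every vertex also has in-degree $1$, the first repeated vertex must be the starting one (otherwise its two predecessors would force in-degree at least $2$), so the walk closes into a directed cycle and no vertex can sit on a tail feeding into a cycle. Hence every vertex lies on exactly one directed cycle, and these cycles are vertex-disjoint, and therefore edge-disjoint. This proves the second bullet directly, and it is essentially the observation already used in the proof of Lemma~\ref{lem:closed_rings}.

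For the first bullet, the key dictionary is the correspondence between the positive-length arcs of the trajectories and the intra-trajectory edges of the SMG, those added in step~3 of the construction. Each arc of a trajectory $C_i$ between two consecutive link positions is represented by exactly one such edge, the orientation of the edge being fixed by $g(C_i)$ (cf.\ Remark~\ref{rem:one_direction}); conversely each step~3 edge encodes exactly one arc. Since the union of all trajectories is, apart from the finitely many link positions, the disjoint union of these arcs, and since the edge-disjoint cycle decomposition from the previous paragraph assigns each intra-trajectory edge to a unique cycle, each arc belongs to exactly one cycle, i.e.\ to exactly one ring (Remark~\ref{rem:ring_trajectory_decomposition}). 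Two rings arising from two distinct cycles therefore share no intra-trajectory edge, so they use no common arc of positive length; by the definition of disjoint rings they are disjoint, meeting at most at link positions. This exhibits the union of all trajectories as a collection of pairwise disjoint rings.

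For the third bullet I would set up the correspondence explicitly and check that it is a bijection. To a ring I associate the cycle of the SMG traversed by the starving robot that traces it; this is well defined by Lemma~\ref{lem:closed_rings} and Remark~\ref{rem:ring_sequence}. To a cycle of the SMG I associate its planar image, which is a ring. These two maps are mutually inverse: a ring reads off a unique closed walk of arcs and crossings, which by the decomposition is one of the cycles, and a cycle determines its ring uniquely as the traced planar path. Surjectivity onto rings is immediate, since every ring is by definition the motion of some starving robot. Injectivity is where the earlier bullets are used: distinct cycles are edge-disjoint, and because the alternation forced by the node types guarantees that every cycle traverses at least one intra-trajectory edge, distinct cycles use disjoint sets of arcs and hence trace planar paths differing on a set of positive length, so they yield different rings.

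The step I expect to be the main obstacle is the injectivity half of the third bullet, namely ruling out that two different SMG cycles collapse to the same planar ring. This is exactly where one must invoke the arc-to-edge dictionary together with edge-disjointness, since two cycles with the same image but different crossing edges at the link positions would otherwise be conflated. Making precise that each cycle necessarily uses a positive-length arc, so that the planar images genuinely separate, is the one place that requires returning to the step-by-step SMG construction rather than relying solely on the degree-$1$ property.
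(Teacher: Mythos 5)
Your proof is correct and takes essentially the same route as the paper: the paper derives this lemma (stating only a sketch, not a detailed proof) from the in-degree-one/out-degree-one property of the SMG, the resulting decomposition into directed cycles, and the arc-to-edge correspondence embodied in Remarks~\ref{rem:ring_trajectory_decomposition}--\ref{rem:ring_sequence}. Your write-up simply makes that sketch explicit, adding the alternation argument (intra-trajectory and crossing edges alternate along any cycle) to secure injectivity of the ring--cycle correspondence, a point the paper leaves implicit.
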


\comment{
\begin{proof}
Assume that two different rings share sections of trajectories. Let $H$ be the corresponding SMG. Let $X=\{x_1,\dots,x_k\}$ and $Y=\{y_1,\dots,y_l\}$ be the corresponding circular sequences in $H$ of each ring. If the rings are not disjoint, then there exists a common continuous subsequence in $X$ and $Y$. Without loss of generality, assume that $k\leq l$ and the first $r$ nodes of $X$ and $Y$ form the common continuous subsequence with maximum length. If $r<k$, using that out-degree of $x_r$ is 1 and $y_r=x_r$ we obtain that $x_{r+1}=y_{r+1}$ and this is a contradiction because we are assuming that the common continuous sequence has length $r$. If $r=k$ and $k<l$ then using the previous argument we obtain that $x_1=y_{r+1}=y_1$ and this is a contradiction because all the nodes in $Y$ should be different. The only remaining option is $r=k=l$, that is, $X$ and $Y$ are the same circular sequence.
\end{proof}

From lemmas \ref{lem:disjoint_rings} and \ref{lem:closed_rings}, and remarks \ref{rem:one_direction} and \ref{rem:ring_sequence} we deduce the following result:
\begin{corollary}
Let $T$ be a set of trajectories, let $G=(V,E)$ be the communication graph on $T$ and let $g$ be an assignation of motion directions such that $\forall (i,j)\in E\;g(C_i)=-g(C_j)$. Let $H$ be the SMG generated by $(T, G, g)$. The connected components of $H$ are cycle graphs, and each connected component in $H$ is a ring.
\end{corollary}
}

Note that we can change the direction for each trajectiory $C_i\in T$. How does it affect the rings?

\begin{lemma}[Opposite direction] \label{lem:same_ring_collection}
Let $T$ be a set of trajectories, let $G=(V,E)$ be the communication graph on $T$ and let $g$ be a direction assignment such that $g(C_i)=-g(C_j)$ for all $(i,j)\in E$. The collection of rings  generated by $(T, G, -g)$ is the same as the collection of rings generated by $(T,G,g)$ but the starving robots move in opposite direction.
\end{lemma}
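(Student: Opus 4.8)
The plan is to show that flipping the direction assignment reverses every edge of the starving-motion graph, and then to translate this edge-reversal back into the geometry of rings via Lemma~\ref{lem:disjoint_rings}.

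First I would compare the SMG $H$ built from $(T,G,g)$ with the SMG $H^{-}$ built from $(T,G,-g)$. Both have the same node set, namely the pair $v^{(i)}_k, w^{(i)}_k$ for every link position, since the nodes depend only on $T$ and $G$ and not on the directions. The claim is that $H^{-}$ is exactly $H$ with every edge reversed, and this is a case check over the four families of edges in the construction. For the intra-trajectory edges, when $g(C_i)=1$ the graph $H$ contains $\left(w^{(i)}_k, v^{(i)}_{k-1}\right)$ for $1<k\le m$ together with $\left(w^{(i)}_1, v^{(i)}_m\right)$; under $-g$ the trajectory $C_i$ has direction $-1$, so $H^{-}$ contains $\left(v^{(i)}_k, w^{(i)}_{k+1}\right)$ for $1\le k<m$ together with $\left(v^{(i)}_m, w^{(i)}_1\right)$, and re-indexing by $k\mapsto k-1$ shows these are precisely the reverses of the edges of $H$, wrap-around edge included. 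For the inter-trajectory edges of an edge $(i,j)\in E$ with $g(C_i)=1$, the graph $H$ contains $\left(v^{(i)}_k, v^{(j)}_l\right)$ and $\left(w^{(j)}_l, w^{(i)}_k\right)$, while $H^{-}$ (now with $-g(C_i)=-1$) contains $\left(v^{(j)}_l, v^{(i)}_k\right)$ and $\left(w^{(i)}_k, w^{(j)}_l\right)$, which are again the reverses. The two remaining cases, $g(C_i)=-1$ for both families, are symmetric.

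Next, by Lemma~\ref{lem:disjoint_rings} the SMG is a disjoint union of directed cycles, and reversing all edges of a disjoint union of directed cycles yields the disjoint union of the same cycles traversed backwards. Hence there is a natural bijection between the cycles of $H$ and the cycles of $H^{-}$ in which corresponding cycles use the same underlying undirected edges but carry opposite orientations. Finally I would translate this back to rings. A ring is determined by its sequence of arc-traversal edges (the $v$-$w$ and $w$-$v$ edges) and crossing edges (the $v$-$v$ and $w$-$w$ edges): the arcs of the trajectories it covers and the link positions at which it switches trajectories are exactly the undirected edges of the corresponding cycle. Since reversing the cycle leaves these undirected edges unchanged, the corresponding ring traces the identical closed curve in the plane, only in the reverse direction. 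Applying Lemma~\ref{lem:disjoint_rings} to both $(T,G,g)$ and $(T,G,-g)$ then yields that the two collections of rings coincide as sets of closed curves, with the direction of travel, and hence the motion of a starving robot, reversed, which is the claim.

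The main obstacle is the bookkeeping in the first step: one must track the index shifts and the wrap-around edges carefully to confirm that each construction rule for $-g$ produces exactly the reverse of the matching rule for $g$. Once the edge-reversal is established, the remaining steps follow directly from Lemma~\ref{lem:disjoint_rings} and the elementary observation that reversing a union of directed cycles reverses each cycle while preserving its underlying edge set.
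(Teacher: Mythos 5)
Your proposal is correct and follows essentially the same route as the paper: both arguments show that the SMG for $(T,G,-g)$ has the same vertex set as the SMG for $(T,G,g)$ with every edge reversed, and then conclude via the cycle decomposition (Lemma~\ref{lem:disjoint_rings}) that the rings coincide as closed curves with reversed direction of travel. Your version is merely more explicit in the case-check of the edge families, and, if anything, cleaner than the paper's sketch, which unnecessarily speaks of re-enumerating link points (the definition fixes a clockwise enumeration independent of $g$, as you correctly use).
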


\begin{proof}
Consider the SMG graph for the system $(T,G,g)$.  
By the definition of starving-motion graph (SMG), in order to construct the SMG for $(T,G,-g)$, 
\begin{itemize} 
\item
we use the same set of vertices, even the same type $v$ or $w$; 
\item 
we enumerate the vertices of each trajectory $C_i$ differently (in opposite direction);
\item 
we flip the direction of each edge in the MSG.
\end{itemize}
Therefore, each ring in the SMG for $(T,G,-g)$ corresponds to a ring in the SMG for $(T,G,g)$ but the direction of the ring is reversed.
\end{proof}

\subsection{Partial Synchronized Communication Systems}

In the rest of this section we assume that some robots left the monitoring group of robots. We show some properties of partial SCSs and their rings.

From Lemma~\ref{lem:disjoint_rings} we have that every point in any trajectory belongs to only one ring. 
The robots are identified with their location, so, at an arbitrary instant of the mission we say that a robot is in a ring if it is located in a point of the ring.

We focus now on the number of robots in a ring.

\begin{lemma}\label{lem:invariant}
In a partial SCS, the number of robots in a ring is the same at any time.
\end{lemma}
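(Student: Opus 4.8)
The plan is to show that robots can only enter or leave a ring at link positions, and that the structure of the starving-motion graph forces any such crossing to be simultaneously a departure from the ring and an arrival into it, so the count is conserved. First I would fix a ring $R$ and consider the motion of the system over a small time interval. Away from link positions, every robot moves continuously along the arc of whatever trajectory it currently occupies, and by Remark~\ref{rem:ring_trajectory_decomposition} each such arc belongs to exactly one ring; hence a robot strictly inside an arc of $R$ stays in $R$, and a robot strictly inside an arc of some other ring stays outside $R$. Therefore the number of robots in $R$ can only change when a robot reaches a link position, i.e.\ at the discrete instants when swapping (Statement~\ref{stm:swapping}) may occur.

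Next I would analyze what happens at a link position $\phi_{ij}$ at a time $t$ when some robot $u$ arrives there. By Theorem~\ref{thm:no_2r_in_traject} each trajectory holds at most one robot, and by Statement~\ref{stm:swapping} the only transitions are: $u$ continues along its current trajectory $C_i$ (if $C_j$ is occupied, it simply meets its synchronized neighbor), or $u$ passes from $C_i$ into the empty trajectory $C_j$. The key observation is that the ring $R$, viewed as a cycle in the SMG (Remark~\ref{rem:ring_sequence}), prescribes for each incoming arc a unique outgoing arc: since every SMG node has in-degree and out-degree exactly $1$, the arc of $R$ that ends at $\phi_{ij}$ is followed by exactly one arc of $R$ leaving that link position. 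I would argue that the trajectory-swap move executed by $u$ is precisely the transition dictated by traversing this SMG edge, so a robot that was on the incoming arc of $R$ immediately continues onto the outgoing arc of $R$ — it never leaves $R$.

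The heart of the argument, and the step I expect to be the main obstacle, is the bookkeeping at a link position shared by two distinct rings, or where a swap connects the incoming arc of $R$ to an arc that a priori might belong to another ring. Here I would invoke Lemma~\ref{lem:disjoint_rings}: distinct rings are edge-disjoint in the SMG and meet only at link positions, so the successor arc determined by the SMG edge out of the current node lies in the \emph{same} ring $R$. Consequently, a robot traversing $R$ that reaches $\phi_{ij}$ has its unique continuation inside $R$, whether that continuation is "stay on $C_i$" or "swap to $C_j$"; symmetrically, no robot from outside $R$ can swap onto an arc of $R$, because that would require an SMG edge crossing between two edge-disjoint cycles, which does not exist. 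Care is needed to rule out two robots meeting at the same link position and both claiming the same outgoing arc, but Theorem~\ref{thm:no_2r_in_traject} guarantees at most one robot per trajectory and hence prevents such collisions.

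Finally I would assemble these local facts into the global invariant. Over any interval between consecutive link-crossing events the count in $R$ is constant by the continuity argument; at each link-crossing event, every robot on an arc of $R$ moves to the unique successor arc of $R$, and no robot crosses the boundary of $R$, so the count is unchanged across the event as well. Since the events are discrete and the two regimes cover all of time, the number of robots in $R$ is the same at every instant, proving the lemma. I would also note that this matches the intuition that rings behave like disjoint conveyor loops along which robots circulate without ever transferring between loops.
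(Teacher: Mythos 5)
There is a genuine gap, and it sits exactly at the step you yourself flagged as the heart of the argument. Your claim that a robot reaching a link position ``has its unique continuation inside $R$, whether that continuation is `stay on $C_i$' or `swap to $C_j$'\,'' is false. By construction of the SMG, the out-edge of every arrival node is a \emph{cross} edge: a ring is the path of a \emph{starving} robot, so it crosses to the neighboring trajectory at every link position it reaches. Hence the unique continuation of $R$ at $\phi_{ij}$ is always ``swap to $C_j$''. The move ``stay on $C_i$'' --- which is what actually happens, by Statement~\ref{stm:swapping}, whenever $C_j$ is occupied and its occupant (by synchronization) arrives at $\phi_{ji}$ at the same moment --- is precisely a departure from $R$: the arc of $C_i$ beyond $\phi_{ij}$ is the continuation of the ring entering along $C_j$, not of $R$. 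Consequently your picture of ``disjoint conveyor loops along which robots circulate without ever transferring between loops'' is wrong: robots transfer between rings every time two of them meet, and meetings are the normal, frequent event in a partial SCS, not a corner case. Your appeal to Lemma~\ref{lem:disjoint_rings} does not rescue the step, because the transfer is not effected by any SMG edge --- it occurs exactly when a robot \emph{declines} to follow the SMG edge because the neighboring trajectory is occupied.

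The lemma is nevertheless true, and the paper's proof supplies the correct local bookkeeping that your argument is missing: at a link position joining arcs of rings $R_i$ and $R_j$, either one robot arrives alone and swaps trajectories, thereby \emph{staying} on its ring, or two robots arrive simultaneously, keep their trajectories, and thereby \emph{exchange} rings --- one leaves $R_i$ and enters $R_j$ while the other leaves $R_j$ and enters $R_i$ (see Figure~\ref{fig:cycle_invariant_with_neighbor}); the same exchange argument applies when both arcs belong to a single ring. So the invariant is conservation of the count via paired, opposite crossings of the ring boundary, not absence of crossings. Your outer structure (changes can occur only at link positions; continuity between such events) is fine and matches the paper, but the core claim it rests on must be replaced by this exchange argument.
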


\begin{proof}
The number of robots in a ring may change only at link positions. Consider a link between two trajectories $C_i$ and $C_j$. Suppose that they correspond to two different rings, say $R_i$ and $R_j$. Suppose that a robot arrives to a link position between $C_i$ and $C_j$. If its neighbor arrives too, then they keep their trajectories and pass each to the other ring, see Figure~\ref{fig:cycle_invariant_with_neighbor} (a). Otherwise, the robot passes to the neighboring trajectory and keeps the ring, see Figure~\ref{fig:cycle_invariant_with_no_neighbor} (b). In any case the number of robots in the rings $R_i$ and $R_j$ does not change. 

If the trajectories $C_i$ and $C_j$ belong to the same ring (as in Figure~\ref{fig:ring_sample} (a)), we can apply the same argument.
\end{proof}

\begin{figure}[h]

\begin{subfigure}{.5\textwidth}
\centering
\includegraphics[scale=.8, page=2]{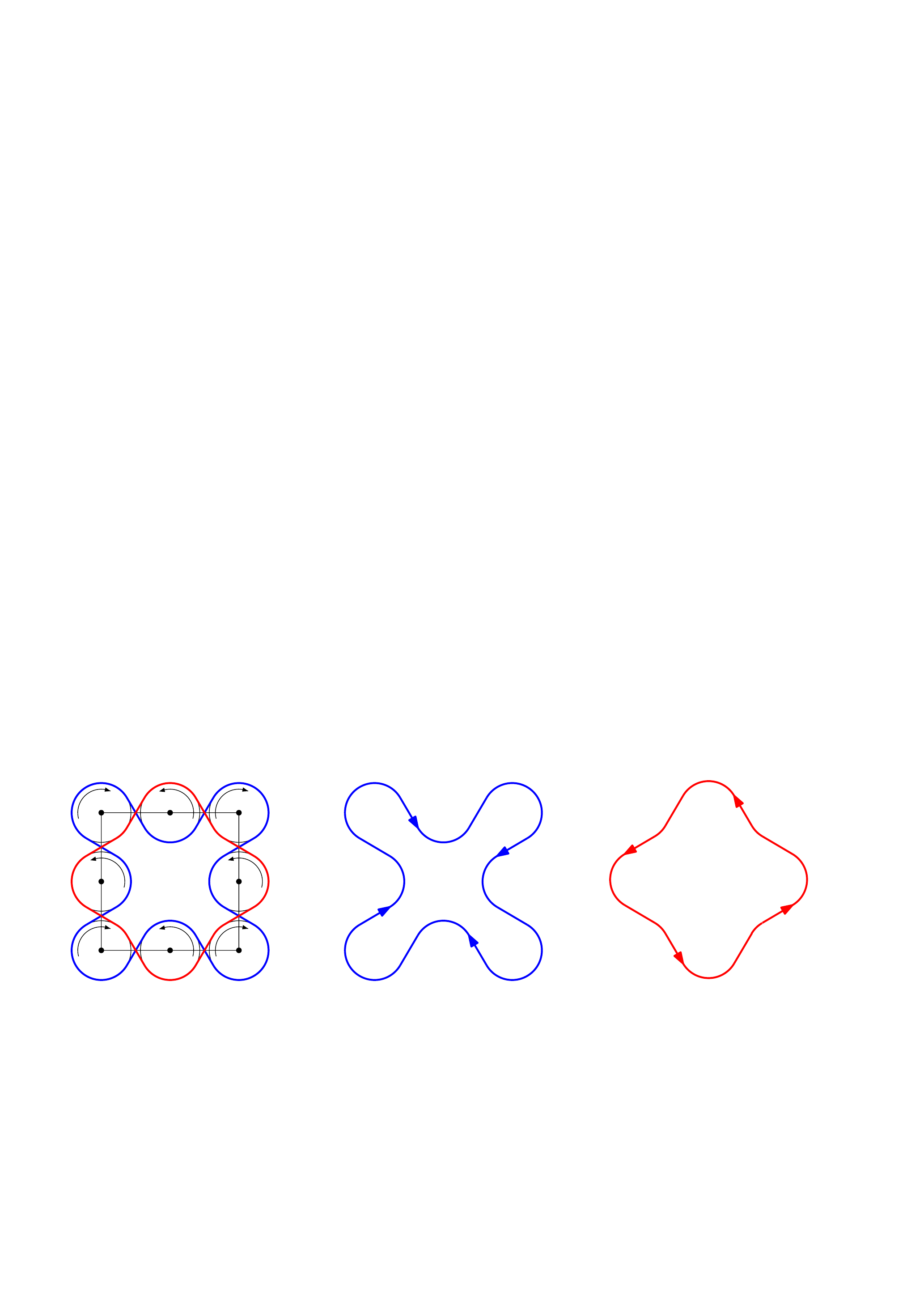}
\caption{}
\label{fig:cycle_invariant_with_neighbor}
\end{subfigure}%
\begin{subfigure}{.5\textwidth}
\centering
\includegraphics[scale=.8, page=3]{thm_invariant.pdf}
\caption{}
\label{fig:cycle_invariant_with_no_neighbor}
\end{subfigure}
\caption{Invariant number of robots in a ring.}
\end{figure}

\begin{definition}[Length of a ring]\label{def:length_ring}
Considering that the time spent by a robot to cross from a trajectory to another is negligible (Statement \ref{stm:swapping}), the \emph{length of a ring} is defined as the sum of the lengths of the arcs of the trajectories forming the ring. Analogously, the \emph{length of a section of a ring} is defined as the sum of the lengths of the arcs of the trajectories forming the section of the ring.
\end{definition}

Figure~\ref{fig:illustrating_length} illustrates the above definition.
\begin{figure}[h!]
\centering
\includegraphics[page=3]{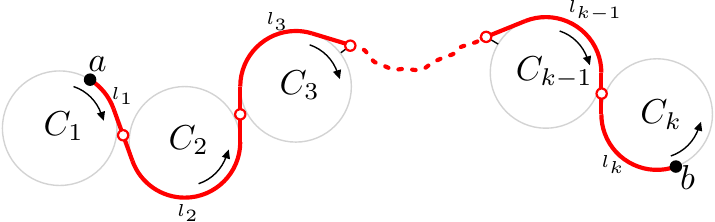}
\caption{The length of the section $s$ of a ring is the sum of the lengths of the arcs $a_1$ and $a_2$.}
\label{fig:illustrating_length}
\end{figure}

\begin{lemma}\label{lem:robot-ring_visit_circles}
Let $p$ be a point in a partial SCS visited by a robot at some time.
Then $p$ is visited by robots periodically. 
%Furthermore, the period is equal to the time to traverse the ring containing $p$ by a single robot. 
Furthermore, the period is equal to $l/(2\pi)$ where $l$ is the length of the ring containing $p$. 
%In a trajectory, every point traversed by a ring with robots is visited %periodically.
\end{lemma}

\begin{proof}
Let $a$ be a robot in a ring $r$ at an arbitrary instant of time. Let $A_1, \dots, A_m$ be the arcs between consecutive link points traversed by $r$, they are enumerated following the direction of motion in $r$. This is a circular ordering, then $A_1$ follows $A_m$ and so on. Let $A_i$ be the arc where $a$ is. From this instant $a$ visits all the points of $A_i$ between its current position and the next link point (end of the arc). When $a$ arrives to the link position, if the neighboring trajectory has a robot $b$ then they meet each other in the link position and $b$ starts to travel the arc $A_{i+1}$, see Figure~\ref{fig:cycle_invariant_with_neighbor}. Otherwise, if the neighboring trajectory is empty when $a$ arrives to the link position then $a$ passes to the neighboring trajectory and starts to travel the arc $A_{i+1}$. Therefore, if an arc is being visited by a robot then the next arc will be visited by the same robot or by the neighbor. Applying this argument successively, it is deduced that all the arcs will be covered following the motion direction into the ring. The lemma follows. 
\end{proof}

As a consequence of Remark~\ref{rem:ring_trajectory_decomposition} and lemmas \ref{lem:invariant} and \ref{lem:robot-ring_visit_circles} we obtain the following result:
\begin{corollary}\label{cor:not-covered-circle}
In a partial SCS, a trajectory is not covered if and only if has a section of a ring with no robots.
\end{corollary}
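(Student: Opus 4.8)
The plan is to reduce coverage of an entire trajectory to coverage of its individual points, and then to characterize point-coverage purely in terms of the ring passing through that point. The central claim I would establish first is the following equivalence: a point $p$ lying on some trajectory is visited periodically by surviving robots if and only if the ring $R$ containing $p$ (unique by Lemma~\ref{lem:disjoint_rings}) holds at least one robot.

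For the backward direction of this claim, suppose $R$ contains a robot $a$. Then the argument in the proof of Lemma~\ref{lem:robot-ring_visit_circles} shows that, following the direction of motion, $a$ (and the successive robots that inherit its arc under the swapping rule) sweeps every arc of $R$ in turn; hence every point of $R$, and in particular $p$, is visited, and by the same lemma it is visited periodically with period $l/(2\pi)$. For the forward direction, if $p$ is visited at some time $t$, then at that instant some robot lies on $R$, so $R$ holds at least one robot; by Lemma~\ref{lem:invariant} the number of robots in $R$ is time-invariant, so $R$ holds a robot at every instant. Taking the contrapositive yields: $p$ is never visited (hence not covered) if and only if $R$ contains no robot.

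With this equivalence in hand the corollary follows from the arc decomposition. By Remark~\ref{rem:ring_trajectory_decomposition}, each arc of a trajectory $C$ between two consecutive link points belongs to exactly one ring. The trajectory $C$ is covered precisely when every one of its points is visited periodically, which by the equivalence above happens precisely when every ring meeting $C$ contains a robot. Negating, $C$ is uncovered if and only if some ring meeting $C$ contains no robot, that is, if and only if $C$ has a section (an arc, or a union of arcs) belonging to a robot-free ring. This is exactly the statement of the corollary.

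I expect the only delicate point to be making precise what \emph{a section of a ring with no robots} means and matching it to the point characterization: the section of $C$ lying on a ring $R$ is covered if and only if $R$ is nonempty, so the presence of even a single robot-free ring meeting $C$ forces an uncovered arc, while conversely any uncovered point of $C$ must lie on such a ring. The dynamics---the swapping rule and the invariance of the per-ring robot count---are already packaged in the cited lemmas, so no fresh case analysis on the motion at link positions is required.
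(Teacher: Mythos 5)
Your proposal is correct and follows essentially the same route as the paper, which states the corollary as an immediate consequence of Remark~\ref{rem:ring_trajectory_decomposition}, Lemma~\ref{lem:invariant}, and Lemma~\ref{lem:robot-ring_visit_circles}; you simply make explicit the intermediate point-coverage equivalence (a point is visited periodically iff its ring holds a robot) that those three results yield. Your care in citing the sweep argument inside the proof of Lemma~\ref{lem:robot-ring_visit_circles}, rather than only its statement, is exactly the right way to close the backward direction.
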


From this corollary and Remark~\ref{rem:ring_trajectory_decomposition} we deduce the following result which is an important tool to determine the uncovering-resilience of a SCS.
\begin{theorem}\label{thm:not-covered-circle}
In a partial SCS, every trajectory is covered if and only if, at some time, every ring has a robot.
\end{theorem}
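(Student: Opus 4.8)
The plan is to prove the biconditional by combining Remark~\ref{rem:ring_trajectory_decomposition} with Corollary~\ref{cor:not-covered-circle}, reducing the statement about \emph{all} trajectories to a statement about \emph{all} rings. The key observation is that, by Remark~\ref{rem:ring_trajectory_decomposition}, every arc of every trajectory belongs to some ring, and by Lemma~\ref{lem:disjoint_rings} the collection of rings partitions the union of the trajectories. Hence ``every trajectory is covered'' and ``every ring is covered'' are statements about the same underlying point set, just grouped differently.

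First I would establish the contrapositive-friendly equivalence between a ring carrying a robot and that ring being fully visited. By Lemma~\ref{lem:invariant}, the number of robots in a ring is invariant in time, so a ring either always has at least one robot or never does. If a ring has a robot, then by Lemma~\ref{lem:robot-ring_visit_circles} that robot visits every point of the ring periodically (since the robot circulates through all consecutive arcs $A_1,\dots,A_m$ of the ring in the direction of motion), so every arc of that ring is covered. Conversely, if a ring has no robot at some time, then by invariance it never has a robot, so no point of the ring is ever visited, and in particular it contains an uncovered section. This is precisely the content packaged in Corollary~\ref{cor:not-covered-circle}, which I would invoke directly.

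Then I would assemble the two directions. For the forward direction, suppose some ring $R$ has no robot at the chosen time; by invariance it has no robot at any time, so by Corollary~\ref{cor:not-covered-circle} (or the argument above) the trajectories meeting $R$ contain an uncovered section of $R$, and since $R$ uses arcs of at least one trajectory $C_i$, the trajectory $C_i$ is uncovered---contradicting the assumption that every trajectory is covered. For the reverse direction, suppose every ring carries a robot. Take any trajectory $C_i$; by Remark~\ref{rem:ring_trajectory_decomposition} each of its arcs belongs to a ring, each such ring has a robot, and by Lemma~\ref{lem:robot-ring_visit_circles} that robot visits every point of that ring periodically, so every arc of $C_i$ is visited periodically and $C_i$ is covered.

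I do not expect a genuine obstacle here, since all the analytic work has already been done in the preceding lemmas; the only care needed is bookkeeping about the ``at some time'' quantifier. The phrase ``at some time, every ring has a robot'' should be read through Lemma~\ref{lem:invariant}: because the count per ring is time-independent, ``every ring has a robot at some common time'' is equivalent to ``every ring always has a robot,'' which removes any apparent subtlety about synchronizing the witnessing times across different rings. I would make this reduction explicit at the start so that the rest of the proof reads as a clean application of Corollary~\ref{cor:not-covered-circle} and Remark~\ref{rem:ring_trajectory_decomposition}.
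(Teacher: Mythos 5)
Your proof is correct and follows essentially the same route as the paper, which deduces the theorem directly from Corollary~\ref{cor:not-covered-circle} and Remark~\ref{rem:ring_trajectory_decomposition}; you simply make explicit the use of Lemma~\ref{lem:invariant} and Lemma~\ref{lem:robot-ring_visit_circles} that underlies that corollary. Your observation that the invariance of the robot count per ring resolves the ``at some time'' quantifier is a sound and welcome clarification, but not a departure from the paper's argument.
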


We prove an auxiliary claim for a path between two robots in a ring, see Figure~\ref{fig:dist_2_neighbor} for an example.
\begin{figure}[ht]
\centering
\includegraphics[scale=1]{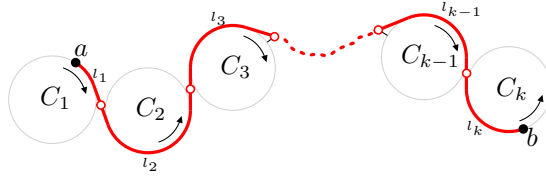}
\caption{A path between two robots in a ring.}
\label{fig:dist_2_neighbor}
\end{figure}

\begin{lemma} \label{aux1}
Let $\sigma$ be a path between two robots in a ring in a partial SCS at time $t$.
%If $\sigma$ consists of arcs of  $C_1,C_2,\dots,C_k$ of length $t_1,t_2,\dots,t_k$, %respectively, then, for  all $1\leq j\le k$, 
If $\sigma=A_1A_2\dots A_k$ where $A_i,i=1,2\dots,k$ is a directed arc of $C_i$, 
%of length $t_i$, 
then, for  all $1\leq j\le k$, 
%Suppose that $\sigma$ consists of arcs of  $C_1,C_2,\dots,C_k$ of length $t_1,t_2,\dots,t_k$. 
%Then, for  all $1\leq j\le k$, 
\begin{equation}\label{eq:phi}
f(C_j)+g(C_j)\cdot2\pi\cdot(t+\sum_{i=1}^j t_i)=\theta_{j}.
\end{equation}
where $t_i$ is the required time by a robot to traverse $A_i$ and 
$\theta_{j}$ is the angle of the second endpoint of $A_j$. 
%where $\phi_{j,j+1}$ %is the link position of the trajectory $C_j$ with respect to $C_{j+1}$, 
%is the angle of the point in $C_j$ at the link $(C_j,C_{j+1})$.
\end{lemma}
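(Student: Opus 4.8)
The plan is to prove the identity (\ref{eq:phi}) by induction on $j$, tracking a single scheduled position as it is handed off from one circle to the next at each link position of the ring. The underlying idea is that a point moving along the ring obeys the schedule formula (\ref{sync}) of whatever circle $C_i$ it currently occupies, and that the synchronization of neighbors lets us switch schedules at a link position without altering the time coordinate. All angular equalities below are understood modulo $2\pi$, which is harmless by Remark~\ref{rem:positions_congruence}.

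First I would settle the base case $j=1$. Since $\sigma$ starts at a robot located at the first endpoint of $A_1$ at time $t$, and this robot lies on $C_1$, the schedule definition (\ref{sync}) (applied, if necessary, after a switch governed by Statement~\ref{stm:swapping}) gives that its position at time $t$ is $f(C_1)+g(C_1)\cdot 2\pi\cdot t$, namely the first endpoint of $A_1$. Because $t_1$ is by definition the time needed to traverse the directed arc $A_1$ at angular speed $g(C_1)\cdot 2\pi$, advancing the clock by $t_1$ moves this position exactly to the second endpoint $\theta_1$ of $A_1$; that is, $f(C_1)+g(C_1)\cdot 2\pi\cdot(t+t_1)=\theta_1$, which is (\ref{eq:phi}) for $j=1$.

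For the inductive step, assume (\ref{eq:phi}) holds for some $j<k$, so at time $s_j:=t+\sum_{i=1}^{j} t_i$ the $C_j$-schedule sits at $\theta_j$. The junction between $A_j$ and $A_{j+1}$ is a link position, hence $\theta_j=\phi_{j,j+1}$, and the corresponding link position on $C_{j+1}$, namely $\phi_{j+1,j}$, is the first endpoint of $A_{j+1}$. Since the ring crosses between $C_j$ and $C_{j+1}$ at this link position, the edge between $C_j$ and $C_{j+1}$ belongs to $E$ and, the system being synchronized, they are synchronized neighbors; Remark~\ref{rem:synchro2} then tells us that at the very same instant $s_j$ the $C_{j+1}$-schedule occupies $\phi_{j+1,j}$, i.e. $f(C_{j+1})+g(C_{j+1})\cdot 2\pi\cdot s_j=\phi_{j+1,j}$. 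This is the hand-off: we have replaced the schedule of $C_j$ by that of $C_{j+1}$ while keeping the same time $s_j$. Finally, advancing the clock by $t_{j+1}$ carries the $C_{j+1}$-schedule from $\phi_{j+1,j}$ to the second endpoint $\theta_{j+1}$ of $A_{j+1}$, giving $f(C_{j+1})+g(C_{j+1})\cdot 2\pi\cdot(s_j+t_{j+1})=\theta_{j+1}$, which is exactly (\ref{eq:phi}) for $j+1$ since $s_j+t_{j+1}=t+\sum_{i=1}^{j+1} t_i$.

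The only genuinely delicate point is this hand-off at the link positions; the rest is elementary kinematics of constant-speed motion along a circle. I expect the main obstacle to be stating cleanly why synchronization lets one replace the $C_j$-clock reading at $\theta_j=\phi_{j,j+1}$ by the $C_{j+1}$-clock reading at $\phi_{j+1,j}$ at one and the same instant $s_j$; this is precisely the content of the synchronization condition (Remark~\ref{rem:synchro2}, equivalently Corollary~\ref{cor:adj_opp}) applied to the edge traversed by the ring. One should also record that $A_{j+1}$ is directed from $\phi_{j+1,j}$ to $\theta_{j+1}$ consistently with $g(C_{j+1})$, so that $t_{j+1}$ is indeed its traversal time; this follows from the way the ring threads its arcs in the common motion direction (Remark~\ref{rem:one_direction}).
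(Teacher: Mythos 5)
Your proof is correct and takes essentially the same approach as the paper's: induction on $j$, with the base case read off from the schedule equation (\ref{sync}), and the inductive step performed by the synchronization hand-off at the link position joining $A_j$ to $A_{j+1}$ (the paper invokes ``the definition of synchronization schedule'' where you invoke Remark~\ref{rem:synchro2}), followed by advancing the clock by $t_{j+1}$. Your write-up merely makes explicit two points the paper leaves implicit, namely that $\theta_j=\phi_{j,j+1}$ and that the arc $A_{j+1}$ is directed consistently with $g(C_{j+1})$.
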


\begin{proof}
We prove Equation (\ref{eq:phi}) by induction on $j$. 
For $j=1$, by Equation (\ref{sync}), we have the base case
$$f(C_1)+g(C_1)\cdot 2\pi\cdot (t+t_1) = \theta_{1}.$$
{\em Induction step}.  Suppose that (\ref{eq:phi}) holds for some $j<k$. 
%Assume as the inductive hypothesis that
%$$f(C_j)+g(C_j)\cdot 2\pi\cdot \left(t+\sum_{i=1}^j t_i\right)=\phi_{j,j+1}.$$
%Let us prove that: 
%$$f(C_{j+1})+g(C_{j+1})\cdot 2\pi\cdot \left(t+\sum_{i=1}^{j+1} t_i\right)=\phi_{j+1,j+2}.$$
%By the inductive hypothesis and 
By the definition of synchronization schedule we have
$$f(C_{j+1})+g(C_{j+1})\cdot 2\pi\cdot \left(t+\sum_{i=1}^{j} t_i\right)=
\theta'_{j+1},$$ 
where $\theta'_{j+1}$ is the position of the first endpoint of $A_{j+1}$. 
Since $\theta_{j+1}=\theta'_{j+1}+g(C_{j+1})\cdot 2\pi \cdot t_{j+1}$, we have
%Since $t_{j+1}$ is the time to traverse the arc of $\sigma$ in $C_{j+1}$, we have
$$f(C_{j+1})+g(C_{j+1})\cdot 2\pi\cdot \left(t+\sum_{i=1}^{j+1} t_i\right)=\theta_{j+1},$$
and the lemma follows.
\end{proof}

\begin{lemma} \label{lem:robots_distance}
In a partial SCS, the length of a path between any two robots in the same ring is in $2\pi\mathbb{N}$.
\end{lemma}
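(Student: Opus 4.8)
The plan is to convert the length of the path $\sigma$ into an elapsed-time statement and then extract an integrality constraint from Lemma~\ref{aux1}. Write $\sigma=A_1A_2\dots A_k$ as in that lemma, let $t_i$ be the time needed to traverse the directed arc $A_i$, and let $\theta_k$ denote the angle of the second endpoint of $A_k$, i.e.\ the position of the second robot. Since a robot moves with angular speed $2\pi$ per unit of time on a unit circle, the arc $A_i$ has length $2\pi t_i$, so by Definition~\ref{def:length_ring} the total length of $\sigma$ is $L=2\pi\sum_{i=1}^{k}t_i$. Hence it suffices to prove that $\sum_{i=1}^{k}t_i$ is a (positive) integer.

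First I would apply Lemma~\ref{aux1} with $j=k$, which gives
\[
f(C_k)+g(C_k)\cdot 2\pi\cdot\Big(t+\sum_{i=1}^{k}t_i\Big)=\theta_k .
\]
Next I would use the fact that the second robot actually occupies $\theta_k$ on $C_k$ at time $t$. By Statement~\ref{stm:swapping} every robot always obeys the schedule of the trajectory it currently occupies, so the position of the second robot at time $t$ equals $f(C_k)+g(C_k)\cdot 2\pi\cdot t$; by the definition of $\theta_k$ and Remark~\ref{rem:positions_congruence} this equals $\theta_k$ modulo $2\pi$.

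Comparing the two expressions for $\theta_k$ in $\mathbb{R}/2\pi\mathbb{Z}$ and cancelling the common term $f(C_k)+g(C_k)\cdot 2\pi\cdot t$, I would obtain $g(C_k)\cdot 2\pi\cdot\sum_{i=1}^{k}t_i\equiv 0\pmod{2\pi}$. Since $g(C_k)=\pm 1$, this forces $\sum_{i=1}^{k}t_i\in\mathbb{Z}$. Finally, the two robots are distinct and, by Theorem~\ref{thm:no_2r_in_traject}, cannot share a position, so $L>0$ and therefore $\sum_{i=1}^{k}t_i$ is a strictly positive integer; hence $L=2\pi\sum_{i=1}^{k}t_i\in 2\pi\mathbb{N}$.

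The main obstacle is the bookkeeping of the ``modulo $2\pi$'' identifications: Lemma~\ref{aux1} is phrased as an equality of angles, while the assertion that the endpoint robot sits at $\theta_k$ at time $t$ is naturally an equality only modulo $2\pi$, so the subtraction of the two relations must be carried out consistently in $\mathbb{R}/2\pi\mathbb{Z}$ and then upgraded to the genuine integrality $\sum_i t_i\in\mathbb{Z}$ rather than a mere congruence of angles. The one point that needs care is justifying that the endpoint robot really follows the $C_k$-schedule at time $t$ (so that its position is $f(C_k)+g(C_k)\cdot 2\pi\cdot t$); this is exactly the content of Statement~\ref{stm:swapping}, reinforced by the periodic occupation guaranteed by Lemma~\ref{lem:robot-ring_visit_circles}.
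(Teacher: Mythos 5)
Your proposal is correct and follows essentially the same route as the paper: both apply Lemma~\ref{aux1} at $j=k$, equate the result with the second robot's scheduled position $f(C_k)+g(C_k)\cdot 2\pi\cdot t$ at time $t$ (the paper's equation for $b$), and cancel modulo $2\pi$ via Remark~\ref{rem:positions_congruence} to get $2\pi\sum_i t_i\in 2\pi\mathbb{Z}$. Your explicit justification of strict positivity (via Theorem~\ref{thm:no_2r_in_traject}) is a small refinement the paper leaves implicit, but the argument is the same.
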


\begin{proof}
Let $a$ and $b$ be the positions of two robots in the same ring at time $t$ and let $\sigma$ be the path in the ring from $a$ to $b$ as denoted in Lemma \ref{aux1} and shown in Figure~\ref{fig:dist_2_neighbor}. Note that $C_i$ and $C_{j}$ (with $1\leq i<j\leq k$) could be the same trajectory, see Figure~\ref{fig:twice_same_trajectory}. Thus, if $F=(f,g)$ is the synchronization schedule on the system we have 

\begin{eqnarray}
f(C_1)+g(C_1)\cdot 2\pi\cdot t = a\label{eq:start_a}\\
f(C_k)+g(C_k)\cdot 2\pi\cdot t = b\label{eq:end_b}
\end{eqnarray}

\begin{figure}[ht]
\centering
\includegraphics[scale=1, page=2]{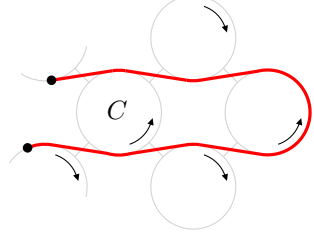}
\caption{A path between two robots in a ring that has two arcs in the trajectory $C$.}
\label{fig:twice_same_trajectory}
\end{figure}

%If $\sigma=A_1A_2\dots A_k$ where $A_i,i=1,2\dots,k$ is a directed arc of $C_i$ of %length $t_i$, then, for  all $1\leq j\le k$, 
Then
\begin{align*}
f(C_{k})+g(C_{k})\cdot 2\pi\cdot\left(t+\sum_{i=1}^{k}t_i\right) &=b
\text{\; (by Lemma \ref{aux1})}\\
f(C_{k})+g(C_{k})\cdot 2\pi\cdot t+g(C_{k})\cdot 2\pi\sum_{i=1}^{k}t_i&=b\\
b+g(C_{k})\cdot 2\pi\sum_{i=1}^{k}t_i&=b \text{\; (by Equation (\ref{eq:end_b}))}\\
g(C_{k})\cdot 2\pi\sum_{i=1}^{k}t_i&=0
\end{align*}

Therefore the angle $2\pi\sum_{i=1}^{k}t_i$ is in $2\pi\mathbb{Z}$. 
Since $2\pi\sum_{i=1}^{k}t_i$ is also the the length of the path $\sigma$, the lemma follows.
\comment{
Now, let us prove the following auxiliary claim using induction on $j$:\\

\emph{For all $1\leq j<k$ we have 
$f(C_j)+g(C_j)\cdot2\pi\cdot(t+\sum_{i=1}^j t_i)=\phi_{j,j+1}$.\\Where $\phi_{j,j+1}$ is the link position of the trajectory $C_j$ with respect to $C_{j+1}$, is the angle of the point in $C_i$ closest to $C_j$ (see Definition~\ref{def:link_position}).
}
\comment{
For $j=1$, using (\ref{eq:start_a}) and the definition of $t_i$, we have:
$$f(C_1)+g(C_1)\cdot 2\pi\cdot (t+t_1) = \phi_{1,2}.$$
Assume as inductive hypothesis:
$$f(C_j)+g(C_j)\cdot 2\pi\cdot \left(t+\sum_{i=1}^j t_i\right)=\phi_{j,j+1}.$$
Let us prove that: 
$$f(C_{j+1})+g(C_{j+1})\cdot 2\pi\cdot \left(t+\sum_{i=1}^{j+1} t_i\right)=\phi_{j+1,j+2}.$$
By inductive hypothesis and the definition of synchronization schedule we have:
$$f(C_{j+1})+g(C_{j+1})\cdot 2\pi\cdot \left(t+\sum_{i=1}^{j} t_i\right)=\phi_{j+1,j},$$
using that $t_{j+1}$ is the time to traverse the arc of $S$ in $C_{j+1}$ we have:
$$f(C_{j+1})+g(C_{j+1})\cdot 2\pi\cdot \left(t+\sum_{i=1}^{j+1} t_i\right)=\phi_{j+1,j+2},$$
and the result follows.\\
}

Using the above claim, we have:
$$f(C_{k-1})+g(C_{k-1})\cdot 2\pi\cdot\left(t+\sum_{i=1}^{k-1}t_i \right)=\phi_{k-1,k},$$
then, using the definition of synchronization schedule we deduce:
$$f(C_{k})+g(C_{k})\cdot 2\pi\cdot\left(t+\sum_{i=1}^{k-1}t_i\right)=\phi_{k,k-1},$$
and using that $t_{k}$ is the time to traverse the arc of $S$ in $C_{k}$ we have:
\begin{align*}
f(C_{k})+g(C_{k})\cdot 2\pi\cdot\left(t+\sum_{i=1}^{k}t_i\right) &=b\\
f(C_{k})+g(C_{k})\cdot 2\pi\cdot t+g(C_{k})\cdot 2\pi\sum_{i=1}^{k}t_i&=b\\
b+g(C_{k})\cdot 2\pi\sum_{i=1}^{k}t_i&=b \text{\; (using (\ref{eq:end_b}))}\\
g(C_{k})\cdot 2\pi\sum_{i=1}^{k}t_i&=0
\end{align*}

Considering that the time spent by a robot to make a tour in a trajectory (system period) is the basic unit of time (Remark~\ref{rem:systemPeriod_basicTime}), we have that:

\begin{align*}
g(C_{k})\cdot 2\pi\sum_{i=1}^{k}\frac{l_i}{2\pi}&=0\\
g(C_k)\cdot\sum_{i=1}^k l_i&=0\\
g(C_k)\cdot l &=0 \text{\; (using (\ref{eq:sum_sections}))}
\end{align*}

Considering Remark~\ref{rem:positions_congruence} and $g(C_k)\in\{1,-1\}$ we obtain that $l=2m\pi$ with $m\in \mathbb{N}$.
}
\end{proof}

\begin{lemma}
The length of every ring in a SCS is in $2m\pi$ for some $m\in \mathbb{N}$.
\end{lemma}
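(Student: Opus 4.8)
The plan is to obtain the full ring length as a single application of Lemma~\ref{lem:robots_distance}, by viewing the whole ring as a (degenerate) path from one robot back to itself. The first thing I would note is that the length of a ring is a property of the triple $(T,G,g)$ alone (Definition~\ref{def:length_ring}) and does not depend on how many robots are present; hence I am free to place a single starving robot on the ring in question and reason in the partial SCS it determines, where Lemma~\ref{lem:robots_distance} is available.

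First I would fix a robot $a$ on the ring $R$ at some time $t$ and trace the path $\sigma$ it follows over exactly one full traversal of $R$. By Remark~\ref{rem:ring_trajectory_decomposition} and Lemma~\ref{lem:robot-ring_visit_circles}, this path starts and ends at $a$'s position and its length equals the ring length $l=\sum_i l_i$. The point is that $\sigma$ is exactly the kind of path considered in Lemma~\ref{aux1} and Lemma~\ref{lem:robots_distance}: a concatenation $A_1A_2\cdots A_k$ of directed arcs whose intermediate subdivision points are all link positions, except that now the two endpoints coincide (both equal to $a$) and the first and last arcs may be partial arcs beginning and ending at $a$'s position.

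Next I would apply the telescoping identity of Lemma~\ref{aux1} to $\sigma$. Its base case only needs the robot to sit at the start of $A_1$ at time $t$ (true by the choice of $a$), and its induction step only needs each intermediate endpoint $\theta_j$ to be a link position (true, since these are precisely where the ring crosses between trajectories); hence the identity applies verbatim to $\sigma$ even with partial end-arcs. Combining the resulting equation $f(C_k)+g(C_k)\cdot 2\pi\cdot(t+\sum_{i=1}^{k} t_i)=a$ with $f(C_k)+g(C_k)\cdot 2\pi\cdot t=a$ (here $C_k=C_1$, since the loop returns to $a$) yields $g(C_k)\cdot 2\pi\sum_{i=1}^{k} t_i\equiv 0 \pmod{2\pi}$, exactly as in the proof of Lemma~\ref{lem:robots_distance}. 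Therefore $l=2\pi\sum_i t_i\in 2\pi\mathbb{Z}$, and since $l>0$ we conclude $l=2m\pi$ for some $m\in\mathbb{N}$.

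The step I expect to be the only real obstacle is the degeneracy $a=b$: making sure it is legitimate to take the two robots of Lemma~\ref{lem:robots_distance} to be one and the same robot and to let the connecting path wrap once around the entire ring. If one prefers to sidestep this, an alternative I would fall back on is to split on the number of robots the ring carries: when $R$ contains two distinct robots $a$ and $b$, they cut $R$ into two paths whose lengths are multiples of $2\pi$ by Lemma~\ref{lem:robots_distance}, and $l$ is their sum; the self-loop argument above would then be needed only for rings admitting a single robot.
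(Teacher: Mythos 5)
Your proposal is correct and follows essentially the same route as the paper: the paper's proof also views the ring as a path from a point $p$ back to itself and invokes Lemma~\ref{lem:robots_distance} (handling the single-trajectory ring, of length $2\pi$, as a trivial separate case). Your extra care about the degenerate endpoints $a=b$ and the verbatim applicability of Lemma~\ref{aux1} simply fills in details the paper leaves implicit, so no further changes are needed.
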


\begin{proof}
Let $r$ be a ring. If $r$ consists of only one trajectory, then its length is $2\pi$. If $r$ consists of arcs of more than one trajectory. Let $p$ be a position in the ring, then the ring can be viewed as a path from $p$ to $p$ in the ring $r$.  
By Lemma \ref{lem:robots_distance} the length of the ring is in $2\pi\mathbb{N}$. 
\end{proof}

\begin{lemma}\label{lem:ringCapacity}
In a SCS, a ring with length $2k\pi$ has at most $k$ robots.
Also, if no robots have left the team, then a ring with length $2k\pi$ has exactly $k$ robots.
\end{lemma}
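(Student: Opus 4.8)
The plan is to prove the two statements separately, the key tool being Lemma~\ref{lem:robots_distance}. For the upper bound, I would fix an instant $t$ at which no robot sits exactly at a link position; such instants are all but finitely many per period, and by Lemma~\ref{lem:invariant} the count I am after does not depend on this choice anyway. Let $N$ be the number of robots in a ring $R$ of length $2k\pi$, and list their positions $p_1,\dots,p_N$ in the cyclic order given by the unique direction of $R$ (Remark~\ref{rem:one_direction}). Let $d_i$ be the length of the directed arc of $R$ from $p_i$ to $p_{i+1}$ (indices taken cyclically), so that $\sum_{i=1}^{N} d_i = 2k\pi$. Each $d_i$ is the length of a path between two robots in the same ring, hence $d_i\in 2\pi\mathbb{N}$ by Lemma~\ref{lem:robots_distance}. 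Moreover two distinct robots cannot occupy the same point, since such a point lies on a single trajectory, which carries at most one robot by Theorem~\ref{thm:no_2r_in_traject}; therefore each $d_i\ge 2\pi$, and $2k\pi=\sum_i d_i\ge 2\pi N$ yields $N\le k$.

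For the exact count when no robot has left, I would argue globally rather than ring by ring. There are then exactly $n$ robots, one per unit circle, and the trajectories have total length $2\pi n$. By Lemma~\ref{lem:disjoint_rings} they decompose into pairwise disjoint rings, of lengths $2k_1\pi,\dots,2k_s\pi$ say, so that $\sum_{j=1}^{s} k_j = n$. At the generic instant chosen above every robot lies in the interior of some arc, and by Remark~\ref{rem:ring_trajectory_decomposition} together with the disjointness of the rings that arc belongs to exactly one ring; hence each robot lies in exactly one ring and the robots split among the rings with total $\sum_j(\text{robots in }R_j)=n=\sum_j k_j$. Since the first part bounds the $j$-th ring by $k_j$ robots, the equality of these two sums together with the term-by-term inequality forces exactly $k_j$ robots in each ring.

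The arithmetic here is routine; the steps needing the most care are the strict inequality $d_i\ge 2\pi$ (equivalently, that no two robots coincide) and the claim that each robot belongs to a unique ring. Both of these can fail exactly at the instants when a robot sits at a link position shared by two rings, which is why the reduction to a generic time, combined with Theorem~\ref{thm:no_2r_in_traject} and the time-invariance guaranteed by Lemma~\ref{lem:invariant}, is the part I would state explicitly rather than gloss over.
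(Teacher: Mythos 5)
Your proof is correct and takes essentially the same route as the paper: Lemma~\ref{lem:robots_distance} yields the upper bound, and the exact count follows from the global identity $\sum_j k_j = n = \sum_j r_j$ together with the term-by-term bound $r_j \le k_j$, exactly as in the paper's argument via Lemma~\ref{lem:disjoint_rings}. The only difference is that you spell out details the paper leaves implicit, namely the cyclic-gap argument at a generic instant and the non-coincidence of distinct robots via Theorem~\ref{thm:no_2r_in_traject}.
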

\begin{proof}
In a SCS trajectories with $n$  the total length of the trajectories is $2n\pi$. Let $m$ be the number of rings in the system and let $2k_1\pi,2k_2\pi,\dots,2k_m\pi$ be their lengths. By Lemma~\ref{lem:disjoint_rings} we obtain that $k_1+k_2+\dots+k_m=n$. Then, from Lemma~\ref{lem:robots_distance} we deduce that the $i$-th ring has at most $k_i$ robots for all $1\leq i \leq m$. 

In a SCS, where no robots have left the team, there are $n$ robots, one per trajectory. Let $r_i$ be number of robots in the $i$-th ring. We have that $r_i\leq k_i$ and $\displaystyle\sum_{i=1}^mk_i=\displaystyle\sum_{i=1}^mr_i=n$, then from this we deduce that $r_i=k_i$ for all $1\leq i\leq m$.
\end{proof}

% % %REVISrobot DEASDE AQU'I

\section{The Uncovering-Resilience}\label{sec:ucov_res}

This section is dedicated to study some properties of the uncovering-resilience that can be used to compute its value in a SCS.

\begin{lemma}\label{lem:cres_min_ring}
In a SCS the uncovering-resilience is $k-1$ where $k$ is the minimum of robots within a ring of the system.
\end{lemma}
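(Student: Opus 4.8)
The plan is to reduce the geometric covering condition to a purely combinatorial statement about how many robots each ring retains, and then to read off the resilience as the worst-case threshold at which an adversary can first empty a ring. The first step is the key reduction: \emph{all trajectories are covered if and only if every ring still contains at least one robot.} By Theorem~\ref{thm:not-covered-circle}, every trajectory is covered exactly when, at some time, every ring has a robot; by Lemma~\ref{lem:robot-ring_visit_circles} a single robot in a ring periodically visits every point of that ring, so one surviving robot suffices to cover all of that ring's arcs, while a ring with no robot leaves each of its arcs (hence, by Remark~\ref{rem:ring_trajectory_decomposition}, sections of trajectories) permanently unvisited. Moreover, by Lemma~\ref{lem:invariant} the number of robots in a ring does not change with time, and since the rings partition the trajectories each robot belongs to exactly one ring for the whole mission; therefore the condition ``every ring nonempty'' depends only on the \emph{set} of departing robots and not on the (possibly different) times at which they leave, which lets me treat the failures as a single set chosen by an adversary.

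Next I would fix the starting distribution. In the full SCS one robot flies in each trajectory, and by Lemma~\ref{lem:ringCapacity} a ring of length $2k_i\pi$ contains exactly $k_i$ robots; write $k=\min_i k_i$ for the minimum over all rings. Reading the resilience in the worst-case sense suggested by the definition — the largest $f$ such that \emph{every} choice of $f$ failing robots keeps all trajectories covered — the problem becomes: what is the largest $f$ for which no set of $f$ robots contains all the robots of some single ring?

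Then I would prove two matching bounds. For the upper bound, let $R$ be a ring attaining the minimum, so $R$ holds exactly $k$ robots; if $f=k$ robots fail, the adversary may choose precisely the $k$ robots of $R$, emptying $R$ and, by the reduction, uncovering the trajectories that contribute arcs to $R$, so the resilience is at most $k-1$. For the lower bound, suppose at most $k-1$ robots fail: each ring $i$ contains $k_i\ge k$ robots, hence can lose at most $k-1<k\le k_i$ of them and therefore retains at least one robot, so by the reduction all trajectories stay covered and the resilience is at least $k-1$. Combining the two bounds gives exactly $k-1$.

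The main obstacle I expect is the reduction step rather than the counting: one must argue cleanly that a nonempty ring is fully covered whereas an empty ring is not (via Lemma~\ref{lem:robot-ring_visit_circles} together with the time-invariance of the per-ring population), and that allowing robots to depart at different times cannot help the system — both of which rest on Lemma~\ref{lem:invariant} effectively assigning a fixed ring to each robot. Once coverage is equated with ``every ring nonempty,'' the two bounds are an immediate worst-case counting argument.
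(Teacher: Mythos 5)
Your proposal is correct and follows essentially the same route as the paper: both reduce coverage to ``every ring is nonempty'' via Theorem~\ref{thm:not-covered-circle}, use Lemma~\ref{lem:invariant} to make ring populations depend only on the set of failed robots (not their departure times), and conclude that the minimum-population ring is the bottleneck, giving resilience $k-1$. Your write-up is merely more explicit about the two matching bounds and the adversarial (worst-case) reading of the definition, which the paper's proof leaves implicit.
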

\begin{proof}
Let $m$ be the number of rings of the system and let $k_1,\dots,k_m$ be the initial number of robots within the rings. 
The number of robots in every ring is constant unless a robot from it leaves the system (Lemma~\ref{lem:invariant}). If, at some time, a robot $u$ from a ring $r$ leaves the system then the number of robots in $r$ decreases by one and the other rings keep their number of robots.
Therefore, to empty the $i$-th ring it is necessary that $k_i$ robots from that ring leave the system. Then, the minimum number of robots that must leave the system to obtain an empty ring is $k=\min\{k_1,\dots,k_m\}$. Finally, using Theorem~\ref{thm:not-covered-circle} the result follows.
\end{proof}

%From Theorem~\ref{thm:not-covered-circle} it is easy to see that the uncovering-resilience is related to the rings of the SCS. We need at least one robot per ring, so the solution is given by $k-1$ where $k$ is the minimum number of robots within a ring. On the other hand, we can easily compute the rings using this simple algorithm. 
Using the previous lemma the uncovering-resilience of a SCS can be easily computed using a simple algorithm.
Since an alive robot describes a ring the approach is as follows. We take one robot, delete its ring and the robots inside the ring and continue with another alive robot. In this way, we can calculate both the rings in the system and the number of robots in each ring. Also, Lemma~\ref{lem:ringCapacity} relates the length of a ring with the initial number of robot on it, so determining the rings and their lengths we can compute the uncovering-resilience of the SCS.

%The number of rings in a SCS directly give us the minimum number of robot we need to cover all trajectories. In a tree, we have one ring, in a cycle graph, two rings . We can also know the number of rings in a grid graph a priori, that is, without executing the above algorithm. In the next section, we elaborate on this.

\begin{theorem}
Let $G$ be the communication graph of a system. Let $F_1=(f_1,g_1)$ and $F_2=(f_2,g_2)$ be two different synchronization schedules on $G$. The SCS on $G$ using $F_1$ has the same uncovering-resilience as the SCS on $G$ using $F_2$.
\end{theorem}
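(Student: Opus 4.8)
The plan is to reduce the statement to a claim purely about ring lengths, which depend on the communication graph and the direction assignment but not on the starting positions. First I would invoke Lemma~\ref{lem:cres_min_ring}, which tells us that the uncovering-resilience of a SCS equals $k-1$, where $k$ is the minimum number of robots contained in any ring of the system (before any dropouts). By Lemma~\ref{lem:ringCapacity}, a ring of length $2k_i\pi$ carries exactly $k_i$ robots when no robot has left the team, so the number of robots in each ring is determined by its length. Consequently the uncovering-resilience is a function of the multiset of ring lengths alone: it equals $\min_{i}\{\ell_i/(2\pi)\}-1$, where $\ell_1,\dots,\ell_m$ are the ring lengths. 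Hence it suffices to show that $F_1$ and $F_2$ induce the same multiset of ring lengths.

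Next I would observe that the rings, together with their lengths, are determined by the triple $(T,G,g)$ and never by the starting-position function $f$. Indeed, the starving-motion graph is constructed from $T$, $G$ and $g$ only: its vertices are the two copies of each link position, and its edges are dictated by the clockwise ordering of link points on each circle and by the signs $g(C_i)$; the function $f$ appears nowhere in the construction. Since by Lemma~\ref{lem:disjoint_rings} the rings are in one-to-one correspondence with the cycles of the SMG, and since the length of each ring is the sum of the (geometrically fixed) arc lengths between consecutive link points that the cycle traverses, the multiset of ring lengths depends only on $(T,G,g)$. In particular, if $g_1=g_2$ the two schedules yield identical rings and we are done.

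The remaining case is $g_1\neq g_2$, and here the key structural fact is that the opposite-direction constraint pins down $g$ up to a global flip. Because $g(C_i)=-g(C_j)$ must hold for every edge $(i,j)\in E$, the assignment $g$ is precisely a proper two-colouring of $G$ by the labels $\{+1,-1\}$; since $G$ is connected, there are exactly two such colourings, namely $g_1$ and $-g_1$, so necessarily $g_2=-g_1$. Now Lemma~\ref{lem:same_ring_collection} applies directly: the system $(T,G,-g_1)$ produces the very same collection of rings as $(T,G,g_1)$, traversed in the reverse direction. Reversing the direction of traversal does not change the length of a ring, so the two multisets of ring lengths again coincide. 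I expect the main obstacle to be making the $f$-independence of the ring structure fully rigorous rather than merely visually evident; once that is pinned to the explicit SMG definition, the two-colouring argument together with Lemma~\ref{lem:same_ring_collection} closes the proof with no further computation.
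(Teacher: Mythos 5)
Your proof is correct and follows essentially the same route as the paper's: reduce the uncovering-resilience to the multiset of ring lengths via Lemma~\ref{lem:cres_min_ring} and Lemma~\ref{lem:ringCapacity}, then split on the direction assignment and invoke Lemma~\ref{lem:same_ring_collection} for the reversed case. You in fact make explicit two points the paper leaves implicit --- that the rings are independent of $f$ (by the SMG construction) and that connectedness forces $g_2=\pm g_1$ (the two-colouring argument) --- while the paper's only extra content is a remark on disconnected graphs, which is already covered by its standing connectedness assumption.
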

\begin{proof}
Suppose that the communication graph $G$ is connected. 
If $g_1=g_2$ then the rings in a SCS using $F_1$ and the rings in other one using $F_2$ are the same. If $g_1=-g_2$ then the rings are the same but using opposite directions of movement (Lemma~\ref{lem:same_ring_collection}). In any case the number of rings and their lengths are the same. Therefore, by Lemma~\ref{lem:ringCapacity}, the initial number of robots within the every ring is the same regardless the chosen synchronization schedule. By Lemma~\ref{lem:cres_min_ring} both synchronized communication system have the same uncovering-resilience.

If the communication graph $G$ is not connected, the uncovering-resilience of a SCS is equal to the smallest uncovering-resilience of a SCS corresponding to a connected component of $G$. By the same argument as for the connected graph $G$, the theorem follows. 
\end{proof}

The previous theorem prove that the uncovering-resilience of a SCS only depends on $T$ and $G$, %the communication graph, 
that is, given trajectories and a communication graph, the uncovering-resilience of every SCS on it will be the same regardless of the chosen synchronization schedule. This lead us to present the following definition:
\begin{definition}[Uncovering-resilience of a communication graph]
Let $T$ be the set of trajectories and $G$ be the communication graph of a system. The \emph{uncovering-resilience of $(T,G)$} denoted by $\UcovRes(T,G)$ is the maximum number of robots that can fail in a SCS on $G$ such that all the trajectories are covered by the surviving robots.
\end{definition}

The following subsections address the study of some properties in specific communication graphs that allow us to determine a priori the value of the uncovering-resilience without executing the above algorithm.

\subsection{Uncovering-Resilience of Trees}
We start analyzing the simplest graphs, the trees.
\begin{lemma}\label{lem:tree-single-ring}
If the communication graph of a system is a tree then there is a single ring.
\end{lemma}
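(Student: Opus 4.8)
The plan is to induct on the number $n$ of circles (equivalently, on the number of nodes of the tree), peeling off a leaf at each step. The guiding intuition is topological: a tree thickened into a fat graph is a disk, whose boundary is a single closed curve, and a ring is exactly such a boundary traversal — an Euler tour of the tree that uses each link twice, once on each side. Since an Euler tour of a tree is a single closed walk, there should be exactly one ring. I would make this precise by a leaf-removal induction, which meshes well with the combinatorial machinery already in place, namely that the rings partition the arcs of the trajectories (Remark~\ref{rem:ring_trajectory_decomposition} and Lemma~\ref{lem:disjoint_rings}).

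For the base case $n=1$ the tree is a single circle with no link points; the whole circle is one arc and constitutes the unique ring. For the inductive step, let $C_\ell$ be a leaf of the tree with unique neighbor $C_p$, and let $T'$ be the tree obtained by deleting $C_\ell$. The restriction of $g$ to $T'$ still assigns opposite directions across every edge, so the inductive hypothesis applies and the system on $T'$ has a single ring $R'$, which therefore contains every arc of $T'$. Passing from $T'$ to $T$ alters the arc decomposition only locally: the circle $C_\ell$ acquires exactly one link position $\phi_{\ell p}$, so $C_\ell$ is traversed as a single arc $A_\ell$ of length $2\pi$ that returns to $\phi_{\ell p}$; and the circle $C_p$ acquires exactly one new link position $\phi_{p\ell}$, which (assuming, as the SMG construction does, that the link points on a circle are distinct) lies in the interior of some arc $B$ of $R'$ and subdivides it into two consecutive arcs $B_1$ (ending at $\phi_{p\ell}$) and $B_2$ (starting at $\phi_{p\ell}$). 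Every other arc of $C_p$, and every other link transition, is untouched.

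The heart of the argument is to verify that these local changes splice $A_\ell$ into $R'$ rather than creating a second ring. Following the starving motion defining $R'$ along $B_1$ in direction $g(C_p)$, the ring reaches $\phi_{p\ell}$; since the neighbor $C_\ell$ is empty for a starving robot, Statement~\ref{stm:swapping} sends it across to $\phi_{\ell p}$ on $C_\ell$, where it traverses the whole of $A_\ell$ in direction $g(C_\ell)=-g(C_p)$ and returns to $\phi_{\ell p}$, the only link point of $C_\ell$; crossing back to $C_p$ it re-enters at $\phi_{p\ell}$ still moving in direction $g(C_p)$ and hence continues along $B_2$, after which it proceeds exactly as $R'$ did beyond $B$. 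Thus the new ring is $R'$ with the block $B_1B_2$ replaced by $B_1 A_\ell B_2$, which is still a single closed path and which now contains every arc of $T$. I expect the main obstacle to be precisely this direction bookkeeping — confirming that the excursion re-enters $C_p$ in the same direction and so rejoins $R'$ instead of detaching as its own cycle — together with the degenerate case $\deg(C_p)=1$ (the two-node tree), where $C_p$ is isolated in $T'$ and the arc $B$ is all of $C_p$; there the base case supplies $R'$ and the identical splicing argument applies.
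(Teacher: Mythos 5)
Your proof is correct and takes essentially the same route as the paper: induction on the number of trajectories, deleting a leaf circle, invoking the inductive hypothesis to get a single ring $R'$, and splicing the leaf back in. The paper's inductive step merely asserts (via a figure) that adding the leaf ``changes the ring by adding a loop covering $C$''; your direction-bookkeeping argument, including the subdivision of the arc $B$ into $B_1 A_\ell B_2$ and the degenerate two-node case, is a careful justification of exactly that step.
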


\begin{proof} We prove the lemma by induction on the number of trajectories. Clearly, if there is only one trajectory, the ring is unique.

\begin{figure}[htb]
\centering
\includegraphics{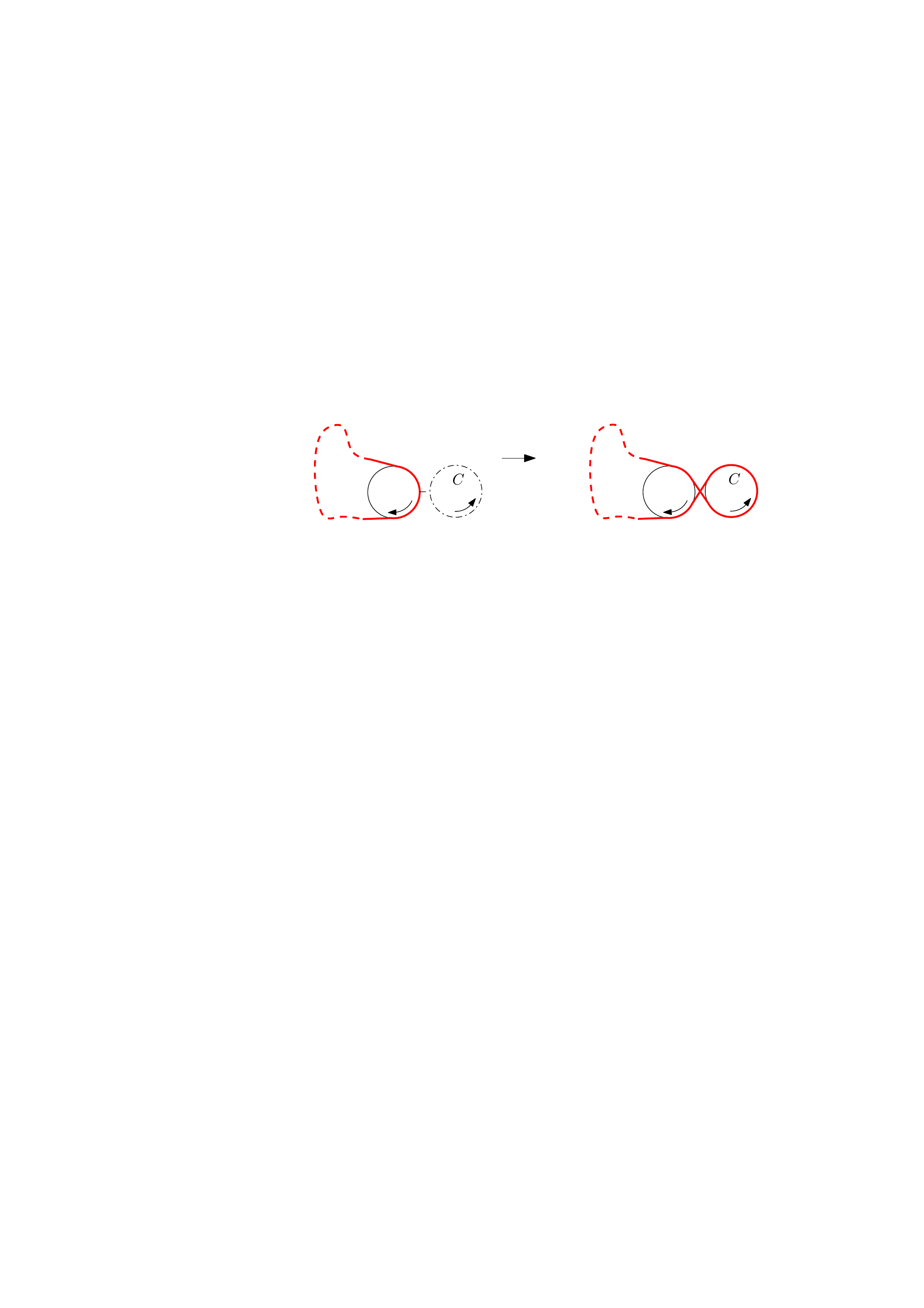}
\caption{(a) Ring corresponding to $T'$.
(b) Ring corresponding to $T$.}
\label{f2}
\end{figure}

Suppose that the claim holds for any tree with $n$ trajectories. We show that it also holds for any tree $T$ with $n+1$ trajectories. Let $C$ be a trajectory corresponding to a leaf in $T$, see Figure~\ref{f2} (a). Let $T'$ be the tree obtained by deleting trajectory $C$. Then
there is exactly one ring corresponding to $T'$. Adding $C$ to the system changes the ring by adding a loop covering $C$ as shown in Figure~\ref{f2} (b). The lemma follows.
\end{proof}

From Lemma~\ref{lem:cres_min_ring}, it is easy to see that if the communication graph has a single ring then the uncovering-resilience of the graph is $n-1$, where $n$ is the number of trajectories in the system. This observation and the previous lemma derive directly in the next result:

\begin{corollary}
If the communication graph of a system with $n$ trajectories is a tree then its uncovering-resilience is $n-1$.
\end{corollary}

\subsection{Uncovering-Resilience of Grids}
We say that the communication graph of a system has a grid configuration if there are $n\cdot m$ trajectories distributed en $n$ rows and $m$ columns, and we call it an \emph{$n\times m$ grid communication graph} or simply a \emph{grid communication graph}. The trajectories in a same row are aligned horizontally and the trajectories in the same column are aligned vertically. Each trajectory is identified by a pair $(i,j)$ where $i$ and $j$ indicate the row and the column where the trajectory is, respectively. In a grid communication graph the trajectory $(i,j)$ is linked (using edges) with the trajectories $(i-1,j)$, $(i,j-1)$, $(i+1,j)$ and $(i,j+1)$ if it exists, see Figure~\ref{fig:grid_sample}. 

\begin{figure}[h!]
\centering
\includegraphics[scale=0.5]{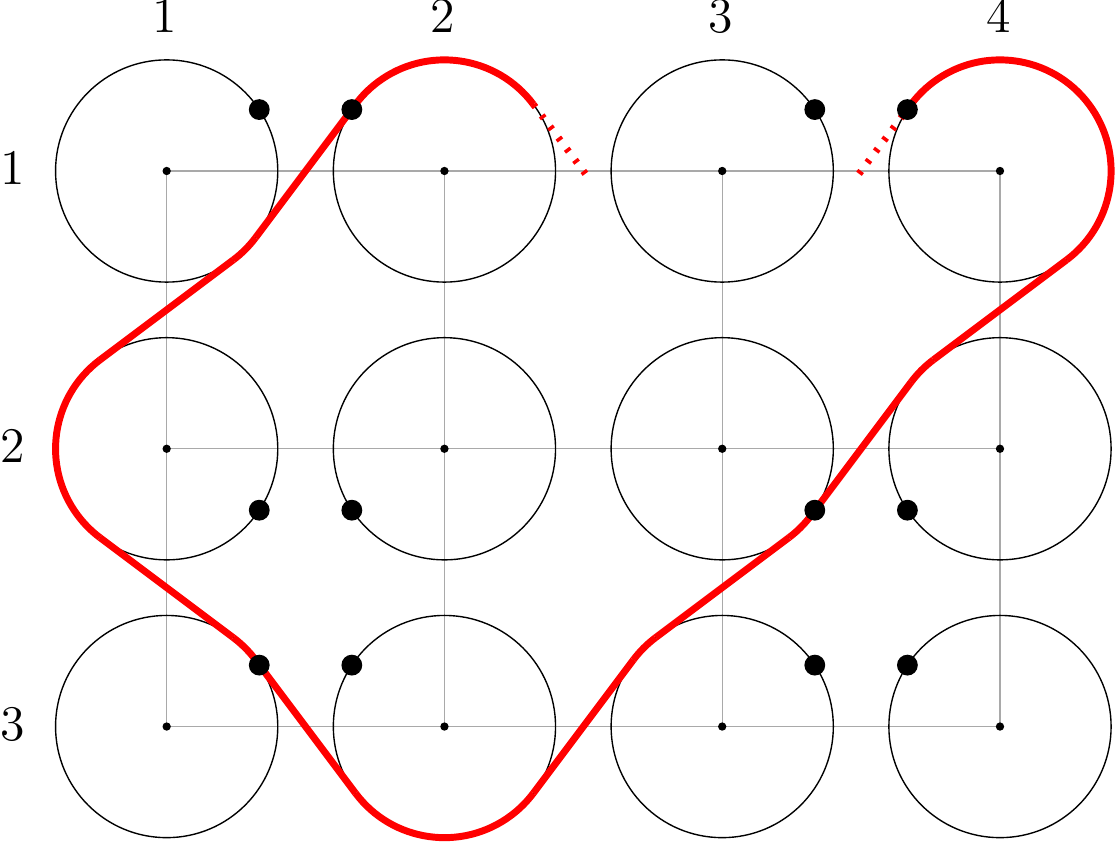}
\caption{This is a $3\times 4$ grid communication graph. The drawn section of a ring hits the top wall at the trajectories $(1,2)$ and $(1,4)$, it hits the bottom wall at $(3,2)$, the left wall at $(2,1)$ and the right wall at $(1,4)$.}
\label{fig:grid_sample}
\end{figure}

In a grid graph we say that a ring \emph{hits} the top wall of the grid if the ring pass through the top section of a circle in the first row. Analogously we can define when a ring hits the left, bottom or right wall of the ring. See Figure~\ref{fig:grid_sample}.

\begin{figure}[h!]
\begin{subfigure}[b]{.5\textwidth}
\centering
\includegraphics[scale=1]{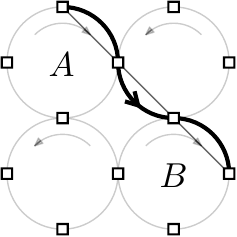}
\caption{}
\end{subfigure}%
\begin{subfigure}[b]{.5\textwidth}
\centering
\includegraphics[scale=1,page=2]{diag-move.pdf}
\caption{}
\end{subfigure}

\begin{subfigure}[b]{.5\textwidth}
\centering
\includegraphics[scale=1,page=6]{diag-move.pdf}
\caption{}
\end{subfigure}%
\begin{subfigure}[b]{.5\textwidth}
\centering
\includegraphics[scale=1,page=5]{diag-move.pdf}
\caption{}
\end{subfigure}%
\caption{Diagonal movement of a starving robot in a grid communication graph.}
\label{fig:diag-move}
\end{figure}

Figure~\ref{fig:diag-move} shows the movement of a starving robot in different sections of a ring between the circles. In this figure, with small white squares are noted two kind of points, the points where the ring hits the walls and the contact points between two neighboring circles. Note that these points are visited diagonally (with slope $1$ and $-1$) by a starving robot.

\begin{figure}[h!]
\centering
\includegraphics[scale=1]{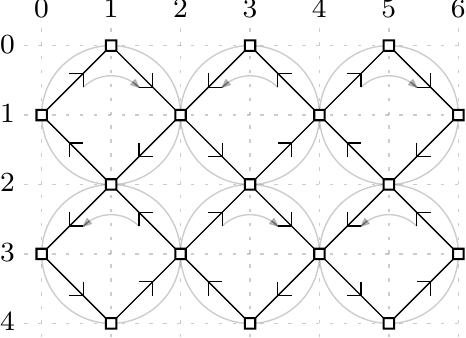}
\caption{Simplified diagram of movement in a grid communication graph.}
\label{fig:diag-diagram}
\end{figure}

Let $G$ be a $n\times m$ grid communication graph. We construct a simplified diagram of movement on $G$ using the hitting points, the contact points between the neighboring circles and the segments between them. This diagram has $2n+1$ and $2m+1$ rows and columns respectively. The rows are indexed from 0 to $2n$ and the columns are indexed from 0 to $2m$, see Figure~\ref{fig:diag-diagram}.

\begin{lemma}\label{lem:nxn_rings}
A system with an $n\times n$ grid communication graph has $n$ rings with length $2n\pi$ and each ring hits the four walls exactly once.
\end{lemma}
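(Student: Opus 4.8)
The plan is to identify each ring with a slope-$\pm1$ billiard trajectory in the simplified diagram of Figure~\ref{fig:diag-diagram} and to read off all three assertions from the resulting polygon. First I would fix a direction assignment $g$ (a proper $2$-colouring of the grid, unique up to a global flip, which by Lemma~\ref{lem:same_ring_collection} does not change the ring collection) and choose coordinates so that the circle in row $i$ and column $j$ has centre $(2j-1,\,2i-1)$, the diagram occupying $[0,2n]\times[0,2n]$. Each circle carries exactly four marked points at its angles $0,\tfrac\pi2,\pi,\tfrac{3\pi}2$: for an interior circle these are the four communication link points, whereas for a boundary or corner circle the missing link points are replaced by wall-hitting points on the grid boundary. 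In every case these four points sit at the lattice nodes adjacent to the centre and split the circle into four quarter-arcs, each of length $\tfrac\pi2$ and each drawn in the diagram as a unit diagonal segment of slope $\pm1$ (Figure~\ref{fig:diag-move}). The key local fact I would verify is that the diagonal motion passes \emph{straight through} every contact point (the segment keeps its slope) but \emph{reflects} at every wall-hitting point (the slope changes sign); a short case check on the four quadrants of a boundary circle, exactly as depicted in Figure~\ref{fig:diag-move}, establishes this dichotomy. Consequently a ring is precisely a closed billiard path of slope $\pm1$ inside the square $[0,2n]^2$.

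Next I would compute these billiard paths explicitly. Starting from the left-wall hitting point $(0,h)$ with $h=2i-1$ (the left side of the circle in row $i$ of the first column), the reflection law forces the closed quadrilateral
\begin{equation*}
(0,h)\ \longrightarrow\ (2n-h,\,2n)\ \longrightarrow\ (2n,\,2n-h)\ \longrightarrow\ (h,0)\ \longrightarrow\ (0,h),
\end{equation*}
a rhombus meeting the left, top, right and bottom walls in exactly one (non-corner) point each, which gives the wall-hitting assertion. Its four sides consist of $2n-h$, $h$, $2n-h$ and $h$ unit steps, i.e.\ $4n$ quarter-arcs in all, so the ring has length $4n\cdot\tfrac\pi2=2n\pi$. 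There are $n$ admissible heights $h\in\{1,3,\dots,2n-1\}$, hence $n$ such rhombi.

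Finally I would show these $n$ rhombi are precisely the rings. A slope-$1$ arc lies on a line $y-x=\mathrm{const}$ and a slope-$(-1)$ arc on a line $y+x=\mathrm{const}$; the rhombus of height $h$ occupies the lines $y-x=\pm h$ and $y+x\in\{h,\,4n-h\}$. As $h$ runs over $\{1,3,\dots,2n-1\}$ these invariant values are pairwise distinct and exhaust exactly the odd values taken by $y-x$ on $[-(2n-1),\,2n-1]$ and by $y+x$ on $[1,\,4n-1]$, so every quarter-arc belongs to one and only one rhombus. Thus the $n$ rhombi are pairwise arc-disjoint and together use all $4n^2$ quarter-arcs; since by Lemma~\ref{lem:disjoint_rings} the rings partition these arcs, the rings coincide with these $n$ rhombi and the count follows. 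The main obstacle is the first step: proving cleanly that the motion is a genuine slope-$\pm1$ billiard, namely the four-marked-points description for boundary and corner circles together with the straight-through-versus-reflect behaviour at contact versus wall points. Once that is in place, the diamond computation and the $y\pm x$ bookkeeping are routine.
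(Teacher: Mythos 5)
Your proof is correct and takes essentially the same route as the paper's: both work in the simplified diagram of movement, trace the explicit slope-$\pm1$ diamond through the four wall points $(0,h)\to(2n-h,2n)\to(2n,2n-h)\to(h,0)$, and count $4n$ steps of length $\pi/2$ to get $n$ rings of length $2n\pi$. The only difference is the final completeness step — the paper rules out further rings by comparing total lengths ($n$ rings of length $2n\pi$ exhaust the $2n^2\pi$ of trajectory), while you exhaust the quarter-arcs via the invariant lines $y\pm x$ — and both are routine once the straight-through-contact-points versus reflect-at-walls dichotomy (which the paper leaves as ``it is easy to see'') is established.
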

\begin{proof}
Let $H$ be the simplified diagram of movement on an $n\times n$ grid communication graph. Let $r$ be the ring that hits the top wall at the vertex $(0,i)$ of $H$. It is easy to see that $r$ hits the right wall at the vertex $(2n-i,2n)$, the bottom wall at the vertex $(2n, 2n-i)$ and the left wall in $(i,0)$. Also, $r$ does not hit the walls in any other vertex. Note that a step from a vertex to other in the simplified diagram of movement corresponds to a section of a ring with length $\pi/2$. The ring $r$ visits $4n$ vertices on $H$, thus the length of $r$ is $4n\cdot \pi/2$, that is $2n\pi$. Repeating this argument for each hitting point in the top wall we obtain $n$ rings of length $2n\pi$. The sum of the lengths of all circles in the system is $2n^2\pi$ and the sum of the lengths of the $n$ rings in the system is $2n^2\pi$ too, so there is no other ring in the system.
\end{proof}

Let $G_1$ and $G_2$ be two grid communication graphs of size $n\times m$ and $n\times p$ respectively. The concatenation of $G_1$ and $G_2$, such that the last trajectory in the $i$-th row of $G_1$ is linked with the first trajectory in the $i$-th row of $G_2$ (for all $1\leq i\leq n$), produces a new $n\times (m+p)$ grid communication graph, see Figure~\ref{fig:h-operation}.

%Let $G_1$ and $G_2$ be two grid communication graphs of size $n\times m$ and $n\times p$ respectively, the \emph{hooking} operation between $G_1$ and $G_2$ produces a new grid communication graph of size $n\times (m+p)$, see Figure~\ref{fig:h-operation}.

\begin{figure}[h]
\centering
\includegraphics[width=\textwidth]{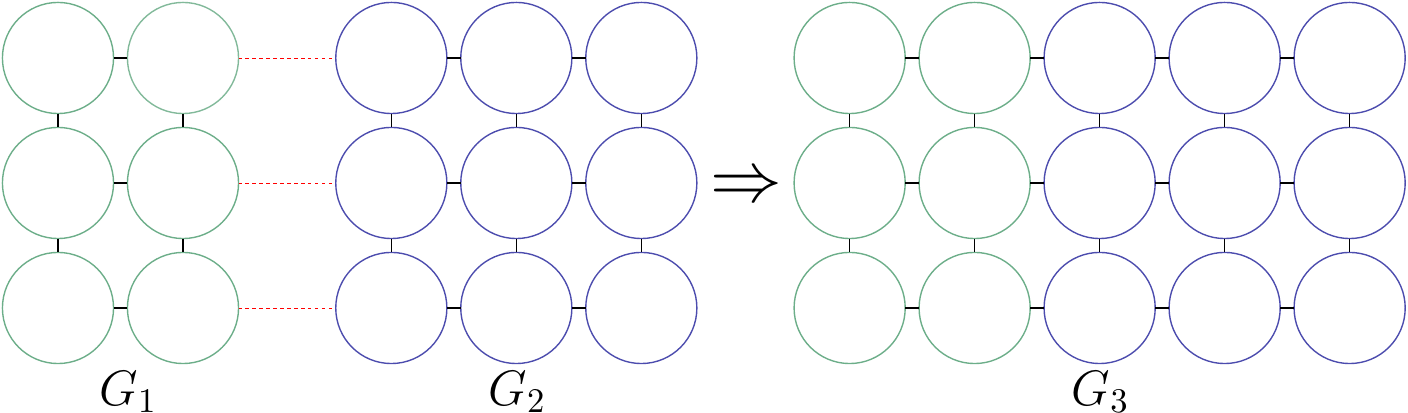}
\caption{$G_3$ is the resultant graph of the concatenation of $G_1$ and $G_2$.}
\label{fig:h-operation}
\end{figure}

The following result is a technical lemma that we'll need to complete the proof of Theorem~\ref{thm:gcd_rings_grid}.
\begin{lemma}\label{lem:hooking-rings}
Let $G$ be an $n\times m$ grid communication graph with $k$ rings of the same length $l$, and every ring in $G$ hits the left and right walls exactly $c$ times, and hits the top and bottom walls exactly $c'$ times. The resultant grid graph of the concatenation of $G$ and an $n\times n$ grid graph has $k$ rings of the same length $l+2cn\pi$ and every ring in the resultant graph hits the left and right walls exactly $c$ times, and hits the top and bottom walls exactly $c'+c$ times. 
\end{lemma}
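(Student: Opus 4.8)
The plan is to track how the rings of $G$ are modified when the $n\times n$ block $H$ is attached along the interface column, and to show that each ring of $G$ simply acquires $c$ detours through $H$ without ever merging with another ring. I would first fix the bookkeeping at the interface. The interface consists of $n$ link positions, one per row, each being the rightmost point of a last-column circle of $G$ and simultaneously the leftmost point of a first-column circle of $H$. By hypothesis every ring of $G$ hits the right wall exactly $c$ times, so there are $kc$ right-wall hits in total; since by Remark~\ref{rem:ring_trajectory_decomposition} each right-wall arc belongs to exactly one ring and there are exactly $n$ last-column circles, we get $kc=n$, and the right-wall hits of the $G$-rings are in bijection with the $n$ interface positions. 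On the other side, Lemma~\ref{lem:nxn_rings} gives that $H$ has $n$ rings, each hitting the left wall exactly once, so the rings of $H$ are likewise in bijection with the $n$ interface positions; I write $P_i$ for the ring of $H$ whose unique left-wall hit is at position $i$.

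Next I would describe the effect of the concatenation locally. Attaching $H$ endows each last-column circle of $G$ with a new right link and each first-column circle of $H$ with a new left link, all located at the interface points. Consequently the right-wall arc of $G$ at position $i$ is split at its rightmost point, and the left-wall arc of $P_i$ is split at its leftmost point, with crossings between $G$ and $H$ inserted at the split. Thus, when a ring of $G$ reaches its former right-wall hit at position $i$, instead of continuing along the old right-wall arc it now crosses into $H$, traverses the (now cut-open) ring $P_i$, and exits $H$ again.

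The decisive step is to verify that this detour returns to the same interface position $i$ and rejoins the same $G$-ring. This is exactly where the $n\times n$ hypothesis is used: since $P_i$ meets the left wall only at position $i$, cutting $P_i$ open at that single point produces a path whose two endpoints both lie at position $i$; hence the excursion enters and leaves $H$ at position $i$, and the two halves of the split $G$-arc are reconnected through the full loop $P_i$. It follows that no two rings of $G$ are joined, so the concatenated graph again has exactly $k$ rings. Each such ring is an original $G$-ring with its $c$ right-wall arcs replaced by $c$ excursions, each excursion traversing a complete copy of some $P_i$ of length $2n\pi$, which gives the length $l+2cn\pi$. For the wall counts, the left-wall hits of $G$ are untouched ($c$ of them), the former right-wall hits of $G$ are now interior crossings and no longer count, while each of the $c$ excursions contributes exactly one hit to the new right wall and one hit to each of the top and bottom walls (again by Lemma~\ref{lem:nxn_rings}, since $P_i$ hits each of those walls once). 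Hence the combined ring hits the left and right walls $c$ times and the top and bottom walls $c'+c$ times, as claimed; a final consistency check is that $k(l+2cn\pi)=kl+2n^2\pi$ equals the total circle length $2(m+n)n\pi$ of the concatenated grid.

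The main obstacle is precisely the ``returns to the same position'' claim: one must be sure that an excursion entering $H$ at position $i$ cannot exit at a different position $i'$, for otherwise distinct $G$-rings would fuse and the ring count would drop below $k$. Making this rigorous requires tying the $v$/$w$ node structure of the starving-motion graph at the interface to the single left-wall crossing of $P_i$, that is, arguing carefully that the unique left-wall hit of each $H$-ring forces both endpoints of the cut-open path to coincide with one interface link position. The remaining length and hit-count computations are routine once this gluing is established.
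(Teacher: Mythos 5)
Your proposal is correct and follows essentially the same route as the paper's proof: invoke Lemma~\ref{lem:nxn_rings} to obtain the $n$ rings of the attached $n\times n$ block, fuse each ring of $G$ with the $c$ block-rings it meets at its right-wall hits, and read off the resulting length and wall-hit counts. In fact, your handling of the decisive step---that an excursion entering the block at interface position $i$ must exit at that same position because $P_i$ hits the left wall exactly once, so distinct rings of $G$ can never merge---supplies the justification for the claim the paper merely asserts (``Note that two rings in $G$ can not be fused between them in the concatenation''), so your write-up is, if anything, more complete than the original.
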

\begin{proof}
Let $G$ be an $n\times m$ grid communication graph with $k$ rings of the same length $l$, and every ring in $G$ hits the left and right walls exactly $c$ times, and hits the top and bottom walls exactly $c'$ times. Let $Q$ be an $n\times n$ grid communication graph. Let $R$ be the resultant graph of the concatenation of $G$ and $Q$. 

$Q$ has $n$ rings of length $2n\pi$, and all of them hit each wall once (Lemma~\ref{lem:nxn_rings}). Thus, every ring in $Q$ extends in $2n\pi$ the length of a ring in $G$, see Figure~\ref{fig:rings_fusion}. Since every ring in $G$ hits the right wall $c$ times then in the concatenation each ring in $G$ is fused with $c$ rings in $Q$. Note that two rings in $G$ can not be fused between them in the concatenation. Then, every ring in $R$ is formed by the fusion of a ring in $G$ and $c$ rings in $Q$. Therefore, $R$ has $k$ rings of the same length $l+2cn\pi$. Every obtained ring hits the left wall $c$ times in the $c$ hitting points of the respective ring in $G$, and hits the right wall $c$ times in the hitting points of the $c$ respective rings in $Q$. Every ring in $R$ hits the top and bottom walls $c'+c$ times, in the $c'$ hitting points of the respective ring in $G$ and in the hitting points of the $c$ respective rings in $Q$.
\begin{figure}
\centering
\includegraphics[page=2, width=\textwidth]{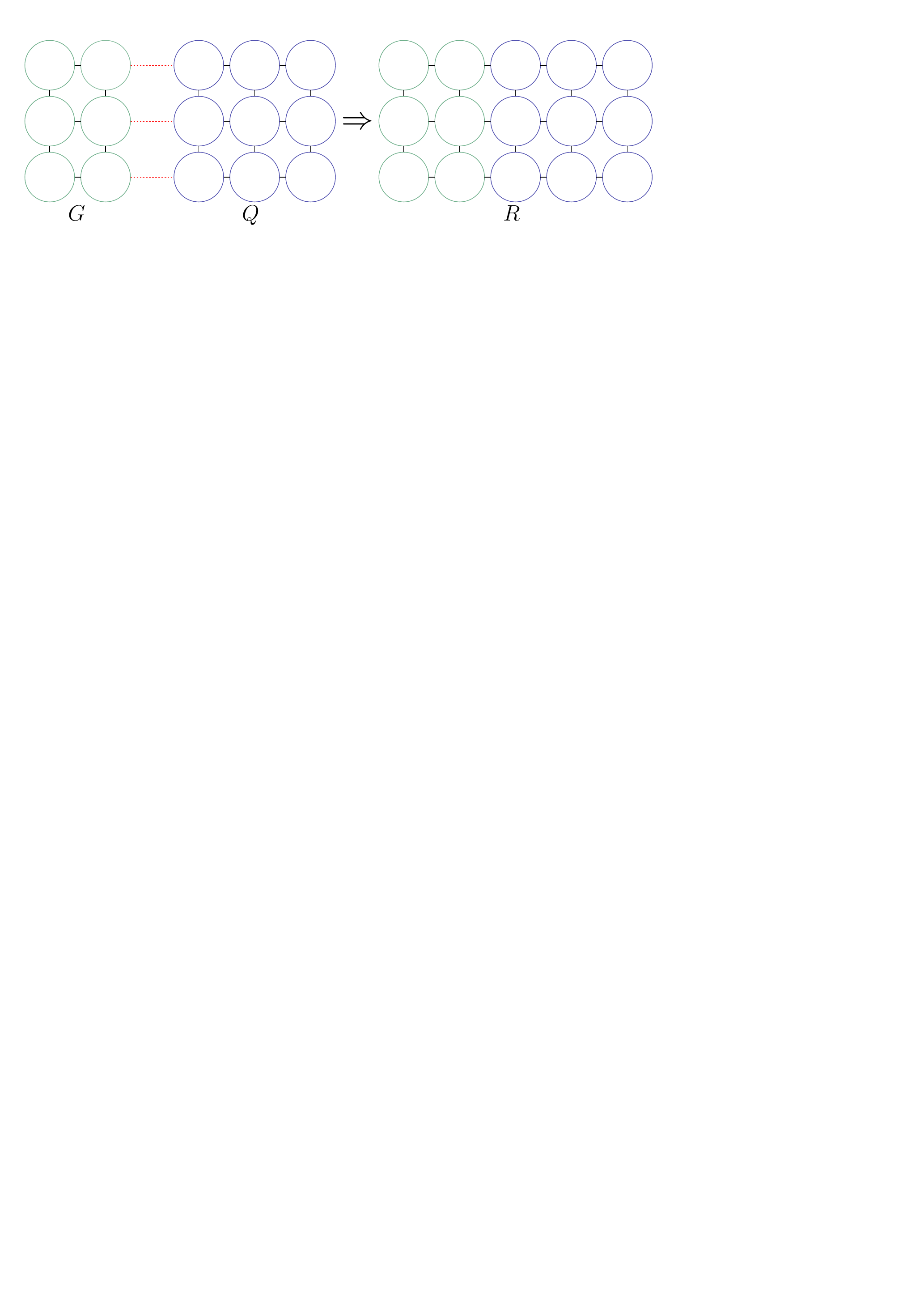}
\caption{$r$ is the ring in $G$ that hits the right wall in the second row. $s$ is the ring in $Q$ that hits the left wall in the second row. $t$ is the ring in $R$ obtained from $r$ and $s$ in the concatenation.}
\label{fig:rings_fusion}
\end{figure}
\end{proof}

The following result is the key of this subsection.
\begin{theorem}\label{thm:gcd_rings_grid}
An $n\times m$ grid communication graph has $gcd(n,m)$ (greatest common divisor of $n$ and $m$) rings of the same length, and all of them have the same number of hitting points in each wall.
\end{theorem}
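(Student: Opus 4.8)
The plan is to prove the statement by strong induction on $n+m$, following the structure of the Euclidean algorithm and relying on the two preceding lemmas as the base case and the inductive step, respectively. To make the induction go through I will prove a slightly stronger invariant than the one stated: that an $n\times m$ grid communication graph has $\gcd(n,m)$ rings, all of the same length, and that there are integers $c,c'$ such that every ring hits the left and the right walls exactly $c$ times each and the top and the bottom walls exactly $c'$ times each. This refined form (with left-count equal to right-count and top-count equal to bottom-count) is precisely the hypothesis required by Lemma~\ref{lem:hooking-rings}, and it trivially implies the weaker assertion of the theorem.

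First I would reduce to the case $m\ge n$. Transposing the configuration (a $90^\circ$ rotation of the plane) carries an $n\times m$ grid to an $m\times n$ grid, sending rings to rings of equal length and exchanging the pair of horizontal walls with the pair of vertical walls. Hence the ring structure of $n\times m$ and $m\times n$ agree up to this relabeling, the number of rings is the same (and $\gcd(n,m)=\gcd(m,n)$), and the invariant $(c,c')$ for one becomes $(c',c)$ for the other. So it suffices to treat $m\ge n$.

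For the base case $m=n$, Lemma~\ref{lem:nxn_rings} gives exactly $n=\gcd(n,n)$ rings, each of length $2n\pi$ and each hitting every wall exactly once, so $c=c'=1$ and the strengthened invariant holds. For the inductive step $m>n$, I would write the $n\times m$ grid as the concatenation of an $n\times(m-n)$ grid $G$ with an $n\times n$ grid $Q$ along the column direction; this is legitimate since $m>n$ forces $m-n\ge 1$. Because $(m-n)+n=m<n+m$, the induction hypothesis applies to $G$ and yields $\gcd(n,m-n)$ rings of a common length, each hitting left/right exactly $c$ times and top/bottom exactly $c'$ times. Feeding this into Lemma~\ref{lem:hooking-rings} produces $\gcd(n,m-n)$ rings in the $n\times m$ grid, all of the same length, each hitting left/right $c$ times and top/bottom $c'+c$ times. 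Finally, the elementary identity $\gcd(n,m)=\gcd(n,m-n)$ turns this count into $\gcd(n,m)$, closing the induction.

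The genuinely load-bearing steps are Lemmas~\ref{lem:nxn_rings} and \ref{lem:hooking-rings}, which already encapsulate the geometry of how rings are laid out in a square block and how they fuse under concatenation; the remaining work here is purely combinatorial bookkeeping. I expect the only real subtlety to be the choice of invariant: one must carry the per-wall hitting counts in the symmetric form (left $=$ right and top $=$ bottom), since the raw phrase ``the same number of hitting points in each wall'' does not by itself guarantee that the left- and right-counts coincide, yet Lemma~\ref{lem:hooking-rings} requires exactly that symmetric form as input. Justifying the transpose reduction carefully is the other point to get right, but it follows from the rotational symmetry of the grid configuration.
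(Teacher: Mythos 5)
Your proposal is correct and takes essentially the same approach as the paper: the paper likewise uses transposition to reduce the case of fewer columns than rows, invokes Lemma~\ref{lem:nxn_rings} for the square case, and peels off $n\times n$ blocks via Lemma~\ref{lem:hooking-rings}, merely organizing the bookkeeping as a nested double induction (on rows, then on columns) rather than your single strong induction on $n+m$ following the subtractive Euclidean algorithm. Your explicit symmetric invariant (left-count equals right-count and top-count equals bottom-count) is a sound refinement that the paper's proof carries only implicitly, since both its $1\times m$ base case and the hypotheses of Lemma~\ref{lem:hooking-rings} presuppose exactly that form.
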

\begin{proof}
We prove the result by induction in the number of rows. For $n=1$, we have that every $1\times m$ grid communication graph has a single ring, $gcd(1,m)=1$, one hitting point in left and right walls, and $m$ hitting points in the top and bottom walls. Then, the theorem holds for these cases. Assume as inductive hypothesis that: for a fixed value $N$, the theorem holds for every $n\times m$ grid communication graph with $n\leq N$. 

Let prove the theorem for $(N+1)\times m$ grid communication graphs. If $m\leq N$ then, using that an $(N+1)\times m$ grid is equivalent to a $m\times (N+1)$ grid, the theorem holds by inductive hypothesis. If $m=N+1$ then the theorem holds by Lemma~\ref{lem:nxn_rings}. In order to prove the theorem for a $(N+1)\times m$ grid communication graph with $m>N+1$ we use induction in the number of columns. Assume as second inductive hypothesis that: for a fixed value $M\geq N+1$, the theorem holds for every $(N+1)\times m$ grid communication graph with $m\leq M$. 

Let $G$ be an $(N+1)\times (M+1)$ grid communication graph. We have that $M+1>N+1$, then removing the last $N+1$ columns of $G$ we obtain an $(N+1)\times (M-N)$ grid communication graph $G'$. The theorem holds for $G'$ by the second inductive hypothesis. The $N+1$ removed columns conform an $(N+1)\times (N+1)$ grid communication graph. Concatenating $G'$ with the $N+1$ removed columns we obtain $G$. Then, using that the theorem holds for $G'$, Lemma~\ref{lem:hooking-rings} and properties of $gcd(\cdotp,\cdotp)$ the result follows.

\end{proof}

The concluding result of this subsection is:
\begin{theorem}
The uncovering-resilience of an $n\times m$ grid communication graph is: $$\frac{n\cdot m}{gcd(n,m)}-1.$$
\end{theorem}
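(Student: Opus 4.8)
The plan is to combine the ring-counting result of Theorem~\ref{thm:gcd_rings_grid} with the capacity bound of Lemma~\ref{lem:ringCapacity} and the characterization of the uncovering-resilience in Lemma~\ref{lem:cres_min_ring}. The key observation is that once we know both the \emph{number} of rings and their \emph{common length}, the initial number of robots per ring is forced; and since in the grid all rings are identical in this respect, the minimum appearing in Lemma~\ref{lem:cres_min_ring} is immediate.

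First I would compute the total length of the rings. Since an $n\times m$ grid communication graph has $nm$ trajectories and each unit circle has length $2\pi$, Remark~\ref{rem:ring_trajectory_decomposition} together with Lemma~\ref{lem:disjoint_rings} shows that the rings partition all the arcs of all trajectories, so the sum of the ring lengths equals $2nm\pi$.

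Next I would invoke Theorem~\ref{thm:gcd_rings_grid}, which guarantees exactly $\gcd(n,m)$ rings, all of the same length $L$. Combining this with the previous paragraph gives $\gcd(n,m)\cdot L = 2nm\pi$, hence $L = 2k\pi$ with $k = \frac{nm}{\gcd(n,m)}$, an integer because $\gcd(n,m)$ divides $n$ (and hence $nm$). By the second part of Lemma~\ref{lem:ringCapacity}, in a SCS where no robot has left the team each ring of length $2k\pi$ contains exactly $k$ robots; thus every ring starts with exactly $\frac{nm}{\gcd(n,m)}$ robots.

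Finally, since every ring contains the same number of robots, the minimum number of robots within a ring is $k = \frac{nm}{\gcd(n,m)}$. Lemma~\ref{lem:cres_min_ring} then yields an uncovering-resilience of $k-1 = \frac{nm}{\gcd(n,m)}-1$, as claimed. I do not expect any genuine obstacle here: the combinatorial heart of the argument is Theorem~\ref{thm:gcd_rings_grid}, which is already established, so this final statement is essentially a bookkeeping corollary. The only points requiring a line of care are verifying that the ring lengths sum to the total trajectory length and that $k$ is integral.
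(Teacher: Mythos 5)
Your proof is correct and follows essentially the same route as the paper: both rest on Theorem~\ref{thm:gcd_rings_grid} to get $\gcd(n,m)$ equal-length rings and then apply Lemma~\ref{lem:cres_min_ring}. The only difference is that you make explicit the bookkeeping step (summing ring lengths to $2nm\pi$ and invoking Lemma~\ref{lem:ringCapacity} to pin down the per-ring robot count) that the paper's proof leaves implicit, which is a welcome clarification rather than a departure.
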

\begin{proof}
The initial number of robots in this system is $n\cdot m$. Using Theorem~\ref{thm:gcd_rings_grid} we have that the minimum number of robots within a ring in the system is $n\cdot m/gcd(n,m)$. %$\dfrac{n\cdot m}{gcd(n,m)}$. 
Using Lemma~\ref{lem:cres_min_ring} the theorem follows.
\end{proof}

\section{The Isolation-Resilience} \label{ch:isol}

Analogously to Section~\ref{sec:ucov_res}, this one is dedicated to study the isolation-resilience. Unfortunately, for this measure we do not know a polynomial algorithm to compute its value for an arbitrary SCS. However, we show some properties in specific cases that allow us to compute the value of the isolation-resilience or at least establish some bounds for its value.

We start introducing a formula that relates the value of the isolation-resilience of a system with the maximum possible number of starving robots.
\begin{lemma}\label{lem:resilience_rel_starv}
In a SCS of $n$ robots, let $\IsolRes$ be the isolation-resilience of the system and let $\starv$ be the maximum possible number of starving robots in the system. It holds that 
$$\starv+R = n-1.$$
\end{lemma}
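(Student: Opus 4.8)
The plan is to route everything through the combinatorial quantity ``minimum number of failures that forces a starvation state''. Let $f$ denote this number, i.e.\ the least size of a set of departing robots whose removal leaves a partial SCS in which every surviving robot starves. The key structural fact I will establish (last paragraph) is \emph{monotonicity}: deleting further robots can never rescue a starving one. Granting this, the starvation-causing failure sets form an up-set under inclusion, so $f$ is a genuine threshold: no set of fewer than $f$ failures causes starvation, while every set of $f$ or more failures can. Comparing with the definition of $\IsolRes$ as the largest number of failures that never produces a starvation state gives $\IsolRes=f-1$. Thus the lemma is equivalent to the identity $\starv=n-f$, i.e.\ the maximum number of simultaneously starving robots equals the maximum number of \emph{survivors} attainable in a starvation state.

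I would prove $\starv=n-f$ by two inequalities. For $\starv\ge n-f$, fix a failure set of size exactly $f$ that produces a starvation state; its $n-f$ survivors all starve, so at least $n-f$ robots starve at once. For $\starv\le n-f$, take any scenario realising $\starv$ starving robots, let $Z$ be that set of starving robots, and delete every surviving robot outside $Z$. By monotonicity each robot of $Z$ keeps starving, so the configuration whose survivor set is exactly $Z$ is a starvation state reached with $n-\starv$ failures; hence $f\le n-\starv$, that is $\starv\le n-f$. Combining, $\starv=n-f$ and therefore $\starv+\IsolRes=\starv+(f-1)=n-1$.

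The crux, and the only nonroutine step, is monotonicity, precisely because removing a robot changes the motion of those that remain: a survivor that used to swap rings with a now-absent neighbour will instead cross over, so one cannot naively assert that the other robots occupy the same points as before. I would resolve this by using that the robots are identical. Whenever two synchronized robots meet, they keep their trajectories and exchange rings (the dynamics behind Lemma~\ref{lem:invariant} and Statement~\ref{stm:swapping}); since the robots are indistinguishable, this exchange leaves the \emph{set of occupied positions} unchanged. Hence the occupancy pattern evolves exactly as in the ``pass-through'' picture in which each robot ignores every meeting and therefore remains forever on its own ring, moving independently of the others (the rings being fixed by $T$, $G$ and $g$ via Lemma~\ref{lem:disjoint_rings}). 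In this picture a robot $u$ starves precisely when, at every link position $\phi_{ij}$ it visits on its ring, the opposite position $\phi_{ji}$ is empty; deleting other robots only vacates positions and never fills new ones, so $u$ keeps starving. This yields the monotonicity used above, shows that the threshold $f$ is well defined, and completes the plan. As a consistency check one can verify the boundary cases predicted by the identity: a lone survivor always starves, so $\starv\ge 1$ and $\IsolRes\le n-2$, while the intact synchronized system has no starving robot, so $\starv\le n-1$ and $\IsolRes\ge 0$, matching the stated range of $\IsolRes$.
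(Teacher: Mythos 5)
Your proof is correct, and it is worth comparing with the paper's own, which is far terser and arguably incomplete at exactly the point you single out as the crux. The paper argues in two steps: (i) ``by the definition of resilience, $n-\IsolRes>\starv$,'' and (ii) by maximality of $\IsolRes$, some set of $\IsolRes+1$ failures produces a starvation state, whence $\starv\geq n-\IsolRes-1$; integrality then forces equality. Step (ii) is your inequality $\starv\ge n-f$. But step (i) is not a direct consequence of the definitions: a configuration realizing $\starv$ starving robots need not be a starvation state (some survivors may still communicate), so to contradict the definition of $\IsolRes$ one must first delete the non-starving survivors and know that the starving ones remain starving --- precisely your monotonicity claim. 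The paper never states or proves this (indeed, it later deduces the fact that the maximum is attained only in a starvation state as Corollary~\ref{cor:max_starv} \emph{from} this lemma), so your pass-through argument --- since robots are identical, the set of occupied positions evolves as if indistinguishable tokens ran independently around their fixed rings, so deleting robots only vacates positions and can never create a meeting --- supplies the missing ingredient. The same monotonicity is also what makes your threshold $f$ well defined (the up-set property of starvation-causing failure sets), another point the paper silently assumes when it treats $\IsolRes+1$ as the first size at which starvation becomes possible. In short: the sandwich skeleton is the same, but your version closes a genuine gap; the cost is the extra exchange-of-identities machinery, which is in any case implicit in the dynamics behind Lemma~\ref{lem:invariant}.
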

\begin{proof}
By the definition of resilience we have that $n-R>\starv$. Using that $\starv$ is the maximum possible number of starving robots in the system, then removing $R+1$ robots of the SCS it is possible that the system falls in starvation state, thus $n-R-1\leq \starv$. Due to  $\starv$ is an integer value, we obtain that $n-R-1=\starv$ and the result follows.
\end{proof}

From this lemma and the definition of isolation-resilience it is deduced the following result:
\begin{corollary}\label{cor:max_starv}
The maximum possible number of starving robots in a SCS is reached only if the system is into starvation. 
\end{corollary}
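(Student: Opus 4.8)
The plan is to reduce everything to the identity $\starv = n-1-R$ supplied by Lemma~\ref{lem:resilience_rel_starv}, and then reason about how the number of starving robots varies with the number of failures. I would write $k$ for the number of robots that have left, so that $n-k$ robots survive; since only a surviving robot can starve, the number of starving robots is always at most $n-k$, and in a \emph{starvation state} it equals $n-k$ exactly. Using the definition of isolation-resilience---removing at most $R$ robots never produces starvation---a starvation state forces $k\ge R+1$, so in starvation the number of starving robots is $n-k\le n-(R+1)=\starv$, with equality iff $k=R+1$. This already pins down that the value $\starv$ is realised by the starvation scenario with exactly $R+1$ failures.

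The substantive part is the word ``only'': I must show that no scenario which is \emph{not} in starvation can also exhibit $\starv$ starving robots. My approach is a deletion/monotonicity argument. Suppose a non-starvation scenario had $\starv$ starving robots; then some survivor $u$ does not starve. I would remove $u$ as a further failure and invoke the defining property of a starving robot: it never meets anyone at a link position, always finding the neighbouring trajectory empty and swapping, so its whole orbit is insensitive to the removal of a robot it never encounters. Hence every previously starving robot still starves, the new scenario has at least $\starv$ starving robots, and by the maximality of $\starv$ exactly $\starv$. Iterating the deletion of non-starving robots, the starving count stays fixed at $\starv$ while survivors disappear, so the process must terminate in a state where no survivor escapes starvation; that terminal state is a starvation state with $k=R+1$ and $\starv$ survivors.

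I expect the main obstacle to be turning this deletion step into a genuine ``only if.'' The monotone argument shows that a non-starvation scenario attaining $\starv$ can be pushed to a starvation scenario attaining $\starv$, but to exclude the non-starvation scenario \emph{outright} I need two things: that after removing $u$ the remaining robots still obey the swapping rule of Statement~\ref{stm:swapping} so the resulting partial SCS is well defined, and that the starving count can never jump strictly above $\starv$ at any deletion---which is exactly where maximality of $\starv$, together with the per-ring invariant of Lemma~\ref{lem:invariant}, is used. The genuinely delicate point is the borderline case $k=R$ with a single survivor: there I must argue, presumably through the ring structure (a non-starving robot witnesses a synchronised encounter whose disappearance must starve its partner), that this configuration cannot persist as a non-starvation state with the full count $\starv$. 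Once that propagation is established, it combines with the elementary counting of the first paragraph to give the corollary.
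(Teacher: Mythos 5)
Your opening reduction to the identity $\starv=n-1-R$ and the observation that a starvation state attains $\starv$ exactly when $k=R+1$ are fine, but the ``only'' direction---the actual content of the corollary---is not proved, and you concede this yourself. Your deletion argument removes \emph{all} non-starving survivors (including the witness), so it terminates in a starvation state with $R+1$ failures and $\starv$ starving robots. That terminal state is perfectly consistent with the definitions of $\starv$ and of the isolation-resilience, so no contradiction with the assumed non-starvation scenario is ever obtained; the ``genuinely delicate point'' you defer (``the case $k=R$ \dots I must argue, presumably through the ring structure'') is precisely the missing step. The repair is to delete the \emph{right} robots: from the hypothetical non-starvation scenario attaining $\starv$, keep the $\starv$ starving robots \emph{and one} non-starving survivor $v$, and remove every other survivor. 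By monotonicity the $\starv$ robots still starve; but now every remaining robot other than $v$ starves, i.e.\ never meets anyone at a link position---in particular never meets $v$---and no other robots are left, so $v$ never meets anyone and starves as well. This produces a scenario with $\starv+1$ starving robots, contradicting the maximality of $\starv$; equivalently, it is a starvation state with only $n-(\starv+1)=R$ failures, contradicting the definition of the isolation-resilience. This is the deduction the paper intends when it derives the corollary from Lemma~\ref{lem:resilience_rel_starv} and that definition.

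A secondary gap: your justification of monotonicity (``the orbit of a starving robot is insensitive to the removal of a robot it never encounters'') only shows that the starving robot's \emph{own} path is unchanged. Starvation, however, is a condition on \emph{other} robots' positions at the link times, and removing a robot does alter the paths of the surviving non-starving robots, which could in principle bring one of them to a circle adjacent to the starving robot at a link time. To close this, note (from the case analysis in the proof of Lemma~\ref{lem:invariant}) that a meeting only exchanges which robot follows which ring, so the \emph{set} of occupied points at any time is simply the flow of the initial occupied points along the rings, independent of which meetings occur; removing robots therefore shrinks the occupied set at every instant and can never create a new encounter for a robot that previously starved.
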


The following four lemmas are technical results to proof the Lemma~\ref{lem:starv_any_synchro}.
\begin{lemma}\label{lem:opposite_synchro}
Let $G=(V,E)$ be the communication graph of a system. If $F=(f,g)$ is a synchronization schedule then $F'=(f,-g)$ is a synchronization schedule too.
\end{lemma}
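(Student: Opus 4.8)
The plan is to reduce everything to the direction-independent synchronization criterion of Corollary~\ref{cor:adj_opp} (equivalently Lemma~\ref{lem:synchro}), which characterizes synchronization solely in terms of the starting positions $f$ and the edge angles $\beta_{ij}$, with no reference to the direction function $g$ beyond the standing assumption that neighbors fly in opposite directions.

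First I would observe that, in this setting, $F=(f,g)$ being a synchronization schedule means $g(C_i)=-g(C_j)$ for every edge $(i,j)\in E$. Negating this identity gives $(-g)(C_i)=-(-g)(C_j)$, so the opposite-direction hypothesis is inherited by $-g$. Hence $F'=(f,-g)$ is again a schedule of the type to which Corollary~\ref{cor:adj_opp} applies.

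Next I would apply Corollary~\ref{cor:adj_opp} to $F$: synchronization of $F$ is equivalent to
$$f(C_j)=2\beta_{ij}-f(C_i)\pm\pi\qquad\forall(i,j)\in E.$$
The crucial point is that this system of equations constrains only $f$ and the fixed geometric quantities $\beta_{ij}$; the direction assignment $g$ does not appear. Since $F'=(f,-g)$ uses exactly the same position function $f$, the identical equations hold verbatim for $F'$, with the same sign choice on each edge. Invoking the converse direction of Corollary~\ref{cor:adj_opp}, now applied to $F'$, yields that $F'$ is synchronized, i.e.\ a synchronization schedule.

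There is no real obstacle here; the content of the lemma is essentially the single observation that the synchronization condition is independent of $g$. If one preferred an argument directly from the definitions, one could instead take a meeting time $t_0\in[0;1)$ witnessing synchronization of the pair in $C_i,C_j$ under $F$ (Remark~\ref{rem:synchro2}) and verify that the time $t_0'\equiv -t_0 \pmod 1$ witnesses synchronization of the same pair under $F'$, using the $2\pi$-periodicity of positions from Remark~\ref{rem:positions_congruence}; but this is strictly more cumbersome than the direct appeal to Corollary~\ref{cor:adj_opp}.
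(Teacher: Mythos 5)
Your proposal is correct, but it takes a genuinely different route from the paper. The paper proves the lemma directly from Definition~\ref{def:synchro_schedule} by a time-reversal argument: if $t\in[0;1)$ is a meeting time for an edge under $F=(f,g)$, then $t'=1-t$ is a meeting time for that edge under $F'=(f,-g)$, since $f(C_i)-g(C_i)\cdot 2\pi\cdot(1-t)\equiv f(C_i)+g(C_i)\cdot 2\pi\cdot t \pmod{2\pi}$ by Remark~\ref{rem:positions_congruence}; the boundary case $t=0$ (where $1-t=1\notin[0;1)$) is handled separately with witness $t'=0$. This is exactly the ``more cumbersome'' alternative you sketch in your last paragraph, except that it is not really cumbersome and it has one advantage: it never uses the opposite-directions hypothesis, so it establishes the lemma for an arbitrary synchronization schedule, matching the unconditional statement of the lemma. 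Your route instead reduces everything to Corollary~\ref{cor:adj_opp}, whose hypothesis is that neighbors fly in opposite directions; this makes the proof essentially one line (the characterization $f(C_j)=2\beta_{ij}-f(C_i)\pm\pi$ mentions only $f$ and the edge angles, and $-g$ inherits the opposite-directions property since $g(C_i)=-g(C_j)$ implies $-g(C_i)=-(-g(C_j))$), and it avoids the $t=0$ edge case entirely. One imprecision to fix: being a synchronization schedule does \emph{not} ``mean'' $g(C_i)=-g(C_j)$ for every edge --- that is a standing assumption the paper imposes (and is built into the definition of a SCS), not a consequence of synchrony, and same-direction neighbors can in fact be synchronized. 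So you should state that you are proving the lemma for the opposite-direction schedules to which the paper applies it, with the hypothesis supplied by that standing assumption rather than derived from the definition; within that scope your argument is sound, while the paper's argument covers the general case.
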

\begin{proof}
Let $(i,j)$ be an arbitrary edge of $G$, then by Definition~\ref{def:synchro_schedule} there exists $t\in[0,1)$ such that:
$$f(C_i)+g(C_i)\cdot 2\pi \cdot t = f(C_j)+g(C_j)\cdot 2\pi\cdot t + \pi.$$

From this and using Remark~\ref{rem:positions_congruence} we obtain:
$$(f(C_i)+g(C_i)\cdot 2\pi \cdot t) - g(C_i)\cdot 2\pi = (f(C_j)+g(C_j)\cdot 2\pi\cdot t + \pi)-g(C_j)\cdot 2\pi.$$

The above equation can be rewritten as follows:
$$f(C_i)-g(C_i)\cdot 2\pi \cdot (1-t) = f(C_j)-g(C_j)\cdot 2\pi\cdot (1-t) + \pi.$$

If $0<t<1$ then $(1-t)\in [0,1)$. In this case, taking $t'=(1-t)$, we have that:
\begin{equation}\label{eq:t_greater_that_zero}
\exists t'\in[0;1)\qquad f(C_i)-g(C_i)\cdot 2\pi \cdot t' = f(C_j)-g(C_j)\cdot 2\pi\cdot t' + \pi.
\end{equation}

If $t=0$ then $1-t=1$ and $f(C_i)-g(C_i)\cdot 2\pi = f(C_j)-g(C_j)\cdot 2\pi + \pi.$ From this and using Remark~\ref{rem:positions_congruence} we obtain $f(C_i) = f(C_j) + \pi.$ In this case, taking $t'=0$, we have that:
\begin{equation}\label{eq:t_equal_to_zero}
\exists t'\in[0;1)\qquad f(C_i)-g(C_i)\cdot 2\pi \cdot t' = f(C_j)-g(C_j)\cdot 2\pi\cdot t' + \pi.
\end{equation}
Therefore, from (\ref{eq:t_greater_that_zero}) and (\ref{eq:t_equal_to_zero}) we deduce that $F' =(f,-g)$ is a synchronization schedule.
\end{proof}

\begin{lemma}\label{lem:matching_synchro}
Let $G=(V,E)$ be the communication graph of a system. Let $F_1=(f_1,g_1)$ and $F_2=(f_2,g_2)$ be two different synchronization schedules on the system. $F_1$ and $F_2$ reach a matching position every unit of time, that is, there exist times $t_1$ and $t_2$ such that $f_1(C_i)+g_1(C_i)\cdot 2\pi\cdot (t_1+t) = f_2(C_i)+g_2(C_i)\cdot 2\pi\cdot (t_2+t)$ for all $C_i\in V$ and $t\in \mathbb{N}$.
\end{lemma}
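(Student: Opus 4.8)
The plan is to collapse the ``for all $t\in\mathbb{N}$'' requirement to a single instant and then solve for the relative phase between the two schedules.

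First I would note that the quantifier over $t\in\mathbb{N}$ is harmless: for any trajectory $C_i$ and any integer $t$, the quantity $g(C_i)\cdot 2\pi\cdot t$ is a multiple of $2\pi$, so by Remark~\ref{rem:positions_congruence} (equivalently Remark~\ref{rmk:repeated_meetings}) the position at time $t_0+t$ coincides with the position at time $t_0$. Hence it suffices to find $t_1,t_2$ with $f_1(C_i)+g_1(C_i)\cdot 2\pi\cdot t_1 \equiv f_2(C_i)+g_2(C_i)\cdot 2\pi\cdot t_2 \pmod{2\pi}$ for every $C_i\in V$, and the matching at all later integer times follows automatically.

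Next I would use the standing assumption that $G$ is connected, together with bipartiteness (forced by Theorem~\ref{thm:synchro}), to pin down the directions. Since $g(C_i)=-g(C_j)$ on every edge, each of $g_1,g_2$ is a proper $2$-colouring of the connected bipartite graph by $\{+1,-1\}$, and such a colouring is unique up to a global sign flip; thus either $g_2=g_1$ or $g_2=-g_1$. I would then measure the relative phase by setting $d(C_i)=f_1(C_i)-f_2(C_i)\bmod 2\pi$. Applying Corollary~\ref{cor:adj_opp} to each schedule on an edge $(i,j)$ gives $f_1(C_j)\equiv 2\beta_{ij}-f_1(C_i)+\pi$ and $f_2(C_j)\equiv 2\beta_{ij}-f_2(C_i)+\pi \pmod{2\pi}$ (using $+\pi\equiv-\pi$); subtracting cancels the geometry and yields $d(C_j)\equiv -d(C_i)\pmod{2\pi}$. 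Since $g_1(C_j)=-g_1(C_i)$ as well, the product $g_1(C_i)\,d(C_i)$ is unchanged across every edge, hence constant on the connected graph; call this constant $c:=g_1(C_i)\,d(C_i)$.

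Finally I would solve for $t_1,t_2$. In the case $g_2=g_1$, the $t=0$ condition rearranges to $d(C_i)\equiv g_1(C_i)\cdot 2\pi(t_2-t_1)$, and multiplying by $g_1(C_i)\in\{\pm1\}$ turns this into the single scalar equation $2\pi(t_2-t_1)\equiv c\pmod{2\pi}$. In the case $g_2=-g_1$, the condition rearranges to $d(C_i)\equiv -g_1(C_i)\cdot 2\pi(t_1+t_2)$, giving $2\pi(t_1+t_2)\equiv -c\pmod{2\pi}$. Each equation is solvable (take $t_1=0$ and $t_2=c/2\pi$, respectively $t_2=-c/2\pi$, adding an integer if a nonnegative time is desired), which produces the required $t_1,t_2$. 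The step I expect to be the main obstacle is the consistency check across the bipartition, namely that one scalar phase $c$ and one pair $(t_1,t_2)$ satisfy the matching equation simultaneously on both colour classes; this rests on the sign of $d$ flipping in lockstep with the sign of $g_1$ along each edge, which is exactly what combining Corollary~\ref{cor:adj_opp} with the opposite-direction constraint delivers, and once the invariance of $g_1(C_i)\,d(C_i)$ is established the choice of $t_1,t_2$ is immediate.
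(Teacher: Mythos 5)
Your proof is correct, but it is organized differently from the paper's. The paper anchors the two schedules at a single circle: it first picks $t_1,t_2$ so that the positions match on $C_1$ (immediate, since the robot under $F_2$ sweeps all of $C_1$ within one unit of time), and then propagates the matching across edges inductively using Lemma~\ref{lem:synchro}: writing $f_k(C_i)+g_k(C_i)\cdot 2\pi\cdot t_k = 2\beta_{1,i}-\bigl(f_k(C_1)+g_k(C_1)\cdot 2\pi\cdot t_k\bigr)+\pi$ for $k=1,2$, matching on $C_1$ forces matching on $C_i$, and connectivity finishes. In that formulation the direction terms are absorbed into the propagated position, so the paper never needs to know how $g_1$ and $g_2$ are globally related. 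Your route instead extracts the time-independent invariant $c=g_1(C_i)\,d(C_i)$ --- whose edge-invariance is exactly the same cancellation of $2\beta_{ij}$ via Corollary~\ref{cor:adj_opp} that powers the paper's induction step --- and then solves one scalar congruence for $(t_1,t_2)$. This costs you two extra ingredients the paper avoids, namely the uniqueness of the $\{+1,-1\}$ coloring on a connected bipartite graph (so that $g_2=\pm g_1$) and a two-case analysis, but it buys an explicit formula for the time offset and makes the relative-phase structure of the two schedules visible. Your opening reduction of the ``for all $t\in\mathbb{N}$'' clause to a single instant is the same periodicity observation the paper invokes in its closing line, so both arguments ultimately rest on Corollary~\ref{cor:adj_opp} plus connectivity; yours is a global-invariant repackaging of the paper's anchor-and-propagate induction.
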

\begin{proof}
Obviously, there exist times $t_1$ and $t_2$ such that $F_1$ and $F_2$ match on $C_1$, that is:
\begin{equation}\label{eq:first_match}
f_1(C_1)+g_1(C_1)\cdot 2\pi\cdot t_1 = f_2(C_1)+g_2(C_1)\cdot 2\pi \cdot t_2.
\end{equation}
Let $C_i$ be a node adjacent to $C_1$ in $G$. We prove that $F_1$ and $F_2$ match on $C_i$ too. By using Lemma~\ref{lem:synchro}, we have that:
\begin{eqnarray}
f_1(C_i)+g_1(C_i)\cdot 2\pi\cdot t_1 = 2\beta_{1,i}-f_1(C_1)+\pi+g_1(C_i)\cdot 2\pi\cdot t_1\label{eq:f_1-C_i}\\
f_2(C_i)+g_2(C_i)\cdot 2\pi\cdot t_2 = 2\beta_{1,i}-f_2(C_1)+\pi+g_2(C_i)\cdot 2\pi\cdot t_2\label{eq:f_2-C_i}
\end{eqnarray}
Since $C_1$ and $C_i$ are neighbor, then $g_1(C_i)=-g_1(C_1)$ and $g_2(C_i)=-g_2(C_1)$. Replacing $g_1(C_i)$ by $-g_1(C_1)$ and $g_2(C_i)$ by $-g_2(C_1)$ in the right side of equations (\ref{eq:f_1-C_i}) and (\ref{eq:f_2-C_i}), respectively:
$$f_1(C_i)+g_1(C_i)\cdot 2\pi\cdot t_1 = 2\beta_{1,i}-f_1(C_1)+\pi-g_1(C_1)\cdot 2\pi\cdot t_1$$
$$f_2(C_i)+g_2(C_i)\cdot 2\pi\cdot t_2 = 2\beta_{1,i}-f_2(C_1)+\pi-g_2(C_1)\cdot 2\pi\cdot t_2.$$
Regrouping the terms and using (\ref{eq:first_match}) we obtain that 
$$f_1(C_i)+g_1(C_i)\cdot 2\pi\cdot t_1 = f_2(C_i)+g_2(C_i)\cdot 2\pi\cdot t_2,$$
thus, $F_1$ and $F_2$ match on $C_i$. Applying progressively this argument to every edge $(i,j)\in E$ such that $F_1$ and $F_2$ match on $C_i$, we deduce that $F_1$ and $F_2$ match on $C_j$ too. Using that $G$ is connected we derive 
$$\forall C_i\in V\quad f_1(C_i)+g_1(C_i)\cdot 2\pi\cdot t_1 = f_2(C_i)+g_2(C_i)\cdot 2\pi\cdot t_2.$$
Using that the robots make a tour in an unit of time the lemma follows.
\end{proof}

%From this proposition we deduce:
%\begin{remark}
%A schedule $F=(f,g)$ has an ``opposite'' synchronization schedule $F'=(f,-g)$.
%\end{remark}

\begin{lemma}\label{lem:starv_g_-g}
Let $G=(V,E)$ and $F=(f,g)$ be the communication graph and synchronization schedule of a system in starvation, respectively. It is possible to obtain a system in starvation with the same number of robots using $F'=(f,-g)$ as synchronization schedule.
\end{lemma}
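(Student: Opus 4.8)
The plan is to transfer the given starving configuration to the reversed schedule and to show it still starves, the key device being that running $(f,-g)$ forward in time is the same as running $(f,g)$ backward in time. First I would record the ingredients already in place: by Lemma~\ref{lem:opposite_synchro}, $F'=(f,-g)$ is indeed a synchronization schedule, and by Lemma~\ref{lem:same_ring_collection} the rings produced by $(T,G,-g)$ are exactly the rings produced by $(T,G,g)$ as point sets, only traversed in the opposite direction. I would also restate starvation in the convenient form: a system is in starvation precisely when, for every edge $(i,j)\in E$ and at every time, no robot sits at $\phi_{ij}$ while another robot sits at $\phi_{ji}$; equivalently, no two robots ever ``meet''. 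In such a state every surviving robot follows its ring unconditionally, since it always crosses at a link because the neighbour is always absent.

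The central observation is the time-reversal identity: a robot occupying $C_i$ has $(f,g)$-position $f(C_i)+g(C_i)\cdot 2\pi\cdot t$ and $(f,-g)$-position $f(C_i)+g(C_i)\cdot 2\pi\cdot(-t)$. Hence, if both systems follow their common (reversed) rings unconditionally from matching initial data, the set of points occupied under $(f,-g)$ at time $t$ equals the set occupied under $(f,g)$ at time $-t$. To fix this initial data I would invoke Lemma~\ref{lem:matching_synchro} to pick a time $t^\ast$ at which the full $(f,g)$- and $(f,-g)$-configurations coincide, choosing $t^\ast$ among the infinitely many matching times so that no robot is mid-crossing, and then define the reversed partial system to have robots exactly where the starving $(f,g)$-system has robots at $t^\ast$. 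This leaves the number of robots unchanged.

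It then remains to show the reversed configuration starves. Because ring lengths are integer multiples of $2\pi$, each ring's period $l/(2\pi)$ (Lemma~\ref{lem:robot-ring_visit_circles}) is an integer, so the whole $(f,g)$ motion is periodic with period equal to the least common multiple of the individual ring periods. Consequently ``no meeting for all $t\ge 0$'' upgrades to ``no meeting for all $t\in\mathbb{R}$'', which in particular covers the negative times used in the reversal. Combining this with the time-reversal identity, the $(f,-g)$ occupied point set at any time $t$ equals the $(f,g)$ occupied point set at $-t$, which never contains a pair of corresponding link positions simultaneously; hence the $(f,-g)$ system never produces a meeting under pure ring-following.

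The main obstacle, and the step I would be most careful about, is closing the apparent circularity between the conditional swapping rule and the unconditional ring-following used in the reversal identity. The resolution is that the actual (conditional) dynamics and the unconditional ring-following dynamics agree as long as no meeting occurs, because the swapping rule only ever changes a robot's behaviour at a meeting. Having shown that the unconditional reversed motion never creates a meeting, the conditional dynamics must coincide with it throughout; therefore no robot in the $(f,-g)$ system ever meets a neighbour, i.e. every robot starves and the reversed system is in a starvation state with the same number of robots. The remaining technical points to dispatch are merely the avoidance of crossing instants when selecting $t^\ast$ and the routine check that the reversed robots are a legitimate partial SCS obtained by removing robots from the full $(f,-g)$ system.
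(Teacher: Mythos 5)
Your proposal is correct and follows essentially the same route as the paper: both invoke Lemma~\ref{lem:opposite_synchro} and Lemma~\ref{lem:matching_synchro} to build the reversed partial system by deleting robots from a full $(f,-g)$ system at a matching instant, and both exploit the fact that ring periods are integers to turn a hypothetical meeting in the reversed system into a meeting of the original starving system at a non-negative time. The only difference is cosmetic: the paper argues by contradiction, placing the meeting at time $kk't'-t$ using the product of the two ring periods, whereas you argue directly by extending ``no meeting'' to negative times via the lcm of the ring periods and a time-reversal identity, and you additionally make explicit the (correct) observation that conditional swapping dynamics coincide with unconditional ring-following until a first meeting occurs.
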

\begin{proof}
Let $A$ be a system in starvation with $k$ robots. Let $G=(V,E)$ and $F=(f,g)$ be the communication graph and synchronization schedule of $A$, respectively. $F'=(f,-g)$ is a synchronization schedule by Lemma~\ref{lem:opposite_synchro} and matches with $F$ by Lemma~\ref{lem:matching_synchro}. Take a SCS with no failures using $F'$. We prove that, at an instant $t_0$ when it is matching with $A$, if we remove exactly the robots lied in the trajectories that are empty in $A$, the resultant system, denoted by $A'$, also is in starvation. For seek a contradiction suppose that $A'$ is not in starvation.

Let $a$ and $b$ be the first pair of robot that exchanges information in $A'$. Let $(i,j)\in E$ be the edge where $a$ and $b$ establish the link communication. Let $t$ be the elapsed time from $t_0$ until the first meeting between $a$ and $b$ in $(i,j)$. That is, after $t$ units of time from $t_0$ the robots $a$ and $b$ arrive to $\phi_{ij}$ and $\phi_{ji}$ respectively. 

Let $r_a$ and $r_b$ be the rings of $a$ and $b$ respectively (may be the same ring). Let $s_a$ and $s_b$ be the positions of $a$ and $b$ at the instant $t_0$, respectively.
Let $2k\pi$ and $2k'\pi$ be the lengths of $r_a$ and $r_b$ respectively. The robot $a$ makes a complete travel in $r_a$ using $k$ units of time and $b$ makes a complete travel in $r_b$ using $k'$ units of time (Lemma~\ref{lem:same_ring_collection}). Let $t'\in \mathbb{N}$ be the lowest natural value such that $kk't'\geq t$. In $A$, after $kk't'$ units of time from $t_0$, the robots $a$ and $b$ are in $s_a$ and $s_b$ respectively. Then, since the rings are closed paths (Lemma~\ref{lem:closed_rings}), at time $kk't'-t$ the robots $a$ and $b$ are in $\phi_{ij}$ and $\phi_{ji}$ respectively. Therefore, they are not starving in $A$, contradicting the hypothesis. The result follows.
\end{proof}

\begin{lemma}\label{lem:same_starv_F1_F2_g}
Let $G=(V,E)$ be the communication graph of a system of robots. Let $F_1=(f_1, g)$ and $F_2=(f_2, g)$ be two synchronization schedules using the same movement directions. It is possible to obtain a system in starvation with the same number of robots using either of them ($F_1$ or $F_2$).
\end{lemma}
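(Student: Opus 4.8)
The plan is to adapt the argument of Lemma~\ref{lem:starv_g_-g}, taking advantage of the fact that $F_1$ and $F_2$ share the direction assignment $g$. Because the rings depend only on $(T,G,g)$ (Lemma~\ref{lem:disjoint_rings}), the two schedules generate \emph{the same} collection of rings, traversed in \emph{the same} direction; in particular the direction-reversal bookkeeping of Lemma~\ref{lem:same_ring_collection} is not needed here, which should make the argument cleaner than that of Lemma~\ref{lem:starv_g_-g}. It suffices to show that a starvation configuration with $k$ robots under $F_1$ can be reproduced under $F_2$, since the roles of $F_1$ and $F_2$ are symmetric.

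First I would apply Lemma~\ref{lem:matching_synchro}: as $F_1$ and $F_2$ use the same $g$, there are times $t_1,t_2$ with $f_1(C_i)+g(C_i)\cdot2\pi\cdot t_1=f_2(C_i)+g(C_i)\cdot2\pi\cdot t_2$ for every $C_i\in V$, and, since the two directions agree, this identity persists for every real time offset, not merely for integers. Let $A$ be a starving system with $k$ robots under $F_1$. By Statement~\ref{stm:swapping}, at any instant each surviving robot of $A$ sits at the schedule position $f_1(C_i)+g(C_i)\cdot2\pi\cdot t$ of whatever trajectory $C_i$ it currently occupies, and by Theorem~\ref{thm:no_2r_in_traject} the occupied trajectories are exactly $k$ in number. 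I then build $A'$ exactly as in Lemma~\ref{lem:starv_g_-g}: start the failure-free SCS under $F_2$, advance it to the matching instant $t_2$, and delete the robots lying in the trajectories that are empty in $A$ at time $t_1$. The matching identity guarantees that the $k$ surviving robots of $A'$ occupy the very same trajectories, at the very same positions, as the $k$ robots of $A$, so $A'$ indeed has the same number of robots.

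The heart of the proof is to check that $A'$ starves. Here I would use that, with $g$ common to both schedules, matched configurations evolve identically: a robot crosses to a neighbouring trajectory or stays on its own solely according to whether that neighbour is present, and presence is a function of the occupied configuration alone. Since $A$ (from $t_1$) and $A'$ (from $t_2$) start matched, an induction over the successive link-crossing events shows they remain matched forever, so $A'$ exhibits a meeting at time $t_2+s$ exactly when $A$ does at $t_1+s$; as $A$ never produces a meeting, neither does $A'$. Alternatively one can run the contradiction argument of Lemma~\ref{lem:starv_g_-g} verbatim, transporting a hypothetical first meeting of $A'$ back to $A$ through whole ring-periods using Lemma~\ref{lem:robots_distance} and Lemma~\ref{lem:closed_rings}. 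The main obstacle is precisely this invariance step: one must argue carefully that matched systems make the same swapping decisions at every link and therefore stay synchronized, handling the bookkeeping of simultaneous arrivals; once that is granted, the equality of robot counts and the transfer of the starvation property are immediate.
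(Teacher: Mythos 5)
Your proposal is correct and follows essentially the same route as the paper: invoke Lemma~\ref{lem:matching_synchro} to align $F_1$ and $F_2$, build $A'$ by removing from a failure-free SCS under $F_2$ the robots on trajectories empty in $A$, and conclude because systems with the same direction assignment $g$ evolve identically from a matched configuration. Your induction over link-crossing events merely spells out the step the paper states in one line (``from this instant the behavior of the robots in $A'$ and $A$ are identical''), so there is no substantive difference.
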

\begin{proof}
Let $A$ be a system in starvation with $k$ robots. Let $G=(V,E)$ and $F_1=(f_1,g)$ be the communication graph and synchronization schedule of $A$, respectively. $F_1$ and $F_2$ match by Lemma~\ref{lem:matching_synchro}. Let $A'$ be a SCS with no failures using $F_2$. When $A'$ reaches a matching position remove the robots lied in trajectories that are empty in $A$. Using that $A$ and $A'$ have the same assignment of movement directions, from this instant the behavior of the robots in $A'$ and $A$ are identical. The lemma follows. 
\end{proof}

With the previous results we are ready to introduce the following lemma:
\begin{lemma}\label{lem:starv_any_synchro}
The maximum possible number of starving robots of a system is the same using any synchronization schedule on it.
\end{lemma}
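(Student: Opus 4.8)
The plan is to reduce an arbitrary pair of synchronization schedules to the two special situations already handled by Lemmas~\ref{lem:starv_g_-g} and \ref{lem:same_starv_F1_F2_g}. The crucial structural observation is that, since $G$ is connected and every synchronization schedule satisfies $g(C_i)=-g(C_j)$ for each edge $(i,j)\in E$, fixing the direction of a single trajectory forces the direction of every other trajectory. Hence there are exactly two admissible direction assignments, some $g$ and its opposite $-g$, and every synchronization schedule uses one of these two.

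Next I would set up the argument symmetrically. Let $F_1=(f_1,g_1)$ and $F_2=(f_2,g_2)$ be two synchronization schedules, and let $S_1$ (resp.\ $S_2$) denote the maximum number of starving robots attainable with $F_1$ (resp.\ $F_2$). It suffices to prove $S_2\ge S_1$; the reverse inequality then follows by exchanging the roles of $F_1$ and $F_2$, and together they give $S_1=S_2$. So I fix a configuration in starvation using $F_1$ with the maximal number $S_1$ of robots and aim to transform it into a configuration in starvation using $F_2$ with the same number of robots.

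Now I split into the two cases permitted by the structural observation. If $g_1=g_2$, then Lemma~\ref{lem:same_starv_F1_F2_g} applies directly and yields a starvation configuration using $F_2$ with $S_1$ robots. If $g_1=-g_2$, I first apply Lemma~\ref{lem:starv_g_-g} with $g=g_1$ to obtain a starvation configuration, with the same number of robots, using the schedule $(f_1,-g_1)$; since $-g_1=g_2$, this new schedule shares its direction assignment with $F_2$, so Lemma~\ref{lem:same_starv_F1_F2_g} then converts it into a starvation configuration using $F_2$ with $S_1$ robots. In either case $S_2\ge S_1$, completing the argument.

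The main obstacle is recognizing and using the rigidity of the direction assignment up to a global flip on a connected graph; once that is in hand, the preceding lemmas chain together mechanically. A secondary point to handle carefully is that Lemmas~\ref{lem:starv_g_-g} and \ref{lem:same_starv_F1_F2_g} are stated as the \emph{existence} of a transformation preserving the robot count, so one should verify that composing the two transformations in the case $g_1=-g_2$ preserves the count throughout, which it does since each individual step is count-preserving.
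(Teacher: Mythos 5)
Your proposal is correct and follows essentially the same route as the paper's own proof: a case split on whether $g_1=g_2$ or $g_1=-g_2$, applying Lemma~\ref{lem:same_starv_F1_F2_g} directly in the first case and chaining Lemma~\ref{lem:starv_g_-g} with Lemma~\ref{lem:same_starv_F1_F2_g} in the second. Your two additions---explicitly justifying that connectivity forces exactly two admissible direction assignments, and framing the conclusion as two symmetric inequalities $S_2\ge S_1$ and $S_1\ge S_2$---are points the paper leaves implicit, and they tighten rather than change the argument.
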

\begin{proof}
Let $F_1=(f_1, g_1)$ and $F_2=(f_2, g2)$ be two different synchronization schedules on the system. 
Let $k$ be the maximum possible number of robots in a system on $G$ using $F_1$, this value is reached when the system falls into starvation (Corollary~\ref{cor:max_starv}). If $g_1=g_2$ then using Lemma~\ref{lem:same_starv_F1_F2_g} the result follows. If $g_1=-g_2$ then in a SCS on $G$ using $F_1'=(f_1, -g_1)$ as synchronization schedule (Lemma~\ref{lem:opposite_synchro}) the maximum possible number of starving robots is $k$ by Lemma~\ref{lem:starv_g_-g}. Then, in a SCS on $G$ using $F_2$ as synchronization schedule the maximum possible number of starving robots also is $k$ by Lemma~\ref{lem:same_starv_F1_F2_g}.
\end{proof}

This lemma lead us to the following definition:
\begin{definition}[Starvation-number]
Let $G$ be the communication graph of a system of robots that fulfills Theorem~\ref{thm:synchro}. The \emph{starvation-number} of $G$, denoted by $\starv(G)$, is the maximum possible number of starving robots in a SCS on $G$.
\end{definition}

From lemmas \ref{lem:starv_any_synchro} and \ref{lem:resilience_rel_starv} we deduce the next result:
\begin{corollary}\label{cor:resilience_CW=CCW}
The isolation-resilience of a system of robots is the same using any synchronization schedule on it. 
\end{corollary}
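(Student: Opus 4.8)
The plan is to obtain this corollary as an essentially one-line consequence of the two results cited, with no further geometric or combinatorial work required. First I would invoke Lemma~\ref{lem:resilience_rel_starv}, which establishes the identity $\starv + R = n-1$ for a SCS of $n$ robots, and simply rearrange it to $R = n - 1 - \starv$. This expresses the isolation-resilience as a function of exactly two quantities: the number of robots $n$ and the maximum possible number of starving robots $\starv$.

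Next I would argue that each of these two quantities is independent of the chosen synchronization schedule. The value $n$ is an intrinsic feature of the system—it is the number of trajectories (equivalently, robots) fixed once the triplet $(T,U,r)$, and hence the communication graph $G$, is given—so it cannot change when we replace one schedule $F_1=(f_1,g_1)$ by another $F_2=(f_2,g_2)$. The quantity $\starv$ is precisely what Lemma~\ref{lem:starv_any_synchro} controls: it is the same under any synchronization schedule on the system. Substituting both facts into $R = n - 1 - \starv$, the right-hand side is invariant across schedules, and therefore so is $R$, which is exactly the assertion of the corollary.

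I do not expect any genuine obstacle at this stage, because all the substantive content has already been absorbed into Lemma~\ref{lem:starv_any_synchro}—namely that reversing all directions (Lemma~\ref{lem:starv_g_-g}) or changing only the phase functions while keeping the directions (Lemma~\ref{lem:same_starv_F1_F2_g}) both preserve the maximum number of starving robots, which together cover the cases $g_1=-g_2$ and $g_1=g_2$. Given that lemma, the corollary reduces to pure arithmetic. The one point I would state explicitly, to avoid any hidden assumption, is that $n$ depends only on the system and not on $F$; everything else follows by direct substitution.
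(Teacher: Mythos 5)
Your proposal is correct and matches the paper's own derivation: the paper likewise obtains this corollary directly by combining Lemma~\ref{lem:starv_any_synchro} (schedule-invariance of the starvation number) with the identity $\starv+\IsolRes=n-1$ from Lemma~\ref{lem:resilience_rel_starv}. Your explicit remark that $n$ depends only on the system, not on the schedule, is a harmless and reasonable addition.
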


\begin{definition}[Isolation-resilience (of a communication graph)]
Let $G$ be the communication graph of a system of robots that fulfills Theorem~\ref{thm:synchro}. The \emph{isolation-resilience} of $G$, denoted by $\IsolRes(G)$, is the maximum number of robots that can fail in a SCS on $G$ without resulting in starvation the system.
\end{definition}

From Lemma~\ref{lem:resilience_rel_starv} and the last two definitions we obtain:
\begin{remark}
Let $G$ be the communication graph of a system of $n$ robots that fulfills Theorem~\ref{thm:synchro}. It holds that:
$$\starv(G)+\IsolRes(G)=n-1.$$
\end{remark}

%Note that since the Problem~\ref{pbm:resilience} can be rewritten as follow:
%\emph{Given the communication graph $G$ of a SCS, determine $\IsolRes(G)$.}
%\end{remark}

The isolation-resilience of a system can be calculated by computing the starvation-number of the communication graph. In this section we show how the isolation-resilience of specific cases can be bounded or exactly calculated.

% % % % % % % % % % % % % % % % % % % % % % % % % % % % % % % % % % % % % % % % % % % % % % % % % % %
% % % COMPUTING RESILIENCE IN SPECIFIC CASES
% % % % % % % % % % % % % % % % % % % % % % % % % % % % % % % % % % % % % % % % % % % % % % % % % % %
\subsection{The Isolation-Resilience in Specific Cases}\label{ch:specific_cases}
In this section we show some results to compute the isolation-resilience of specific cases. In the first subsection we analyze the case in which the communication graph is a tree, then we compute the isolation-resilience of the cycle graphs and finally, the isolation-resilience of communication graphs with a grid configuration is obtained.

\subsubsection{Isolation-Resilience of Trees}
\begin{lemma}
The isolation-resilience of a simple chain of $n$ trajectories is $n-2$.
\end{lemma}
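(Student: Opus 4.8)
The plan is to compute the isolation-resilience indirectly through the starvation-number, using the identity $\starv+\IsolRes=n-1$ from Lemma~\ref{lem:resilience_rel_starv}. Since a chain has $n$ trajectories, proving that the isolation-resilience equals $n-2$ is equivalent to proving that the maximum possible number of starving robots is exactly $1$. First I would dispatch the lower bound $\starv\ge 1$: removing all robots but one leaves a single robot which, by Lemma~\ref{lem:robot-ring_visit_circles}, traverses the entire ring without ever encountering a neighbour, hence starves, so by Corollary~\ref{cor:max_starv} a starvation state with one robot exists.

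The substance is the upper bound $\starv\le 1$, i.e. that two or more surviving robots can never all starve. By Lemma~\ref{lem:tree-single-ring} a chain has a single ring, and by Lemma~\ref{lem:ringCapacity} its length is $2n\pi$. Fix any two surviving robots $a$ and $b$; by Lemma~\ref{lem:robots_distance} the length of the arc of the ring separating them is $2m\pi$ for some integer $m$ with $1\le m\le n-1$. The key geometric observation I would record is that each communication link $(i,i+1)$ is crossed by the ring exactly twice, once in each direction, and that the sub-chain $C_1,\dots,C_i$ is itself a chain whose ring has length $2i\pi$; consequently the portion of the full ring lying between the two crossings of $(i,i+1)$ and covering $C_1,\dots,C_i$ has length exactly $2i\pi$. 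Choosing $i=m$, the two crossings of link $(m,m+1)$ are therefore separated along the ring by exactly $2m\pi$, matching the separation of $a$ and $b$.

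I would then turn this matching into an actual encounter: at the instant the leading robot finishes traversing the sub-chain $C_1,\dots,C_m$ and reaches the link on the $C_m$ side (position $\phi_{m,m+1}$), the trailing robot, being exactly $2m\pi$ behind at the constant ring-speed $2\pi$ per unit time, is simultaneously at the link on the $C_{m+1}$ side (position $\phi_{m+1,m}$). The two are then on opposite sides of link $(m,m+1)$ and exchange information, so neither $a$ nor $b$ starves. Hence no configuration with at least two robots is a starvation state, which gives $\starv\le 1$; combined with the lower bound, $\starv=1$ and the lemma follows from Lemma~\ref{lem:resilience_rel_starv}.

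The main obstacle I expect is making this \emph{meeting} step fully rigorous. I must verify that the two traversals of link $(i,i+1)$ genuinely bound a sub-ring of length $2i\pi$ rather than the complementary value $2(n-i)\pi$, and that the instantaneous crossing rule of Statement~\ref{stm:swapping} places the two robots on opposite sides of the link at one and the same time. The clean way to handle this is to first establish purely combinatorially that the arc-length distance along the ring between the two crossings of $(i,i+1)$ takes the two values $2i\pi$ and $2(n-i)\pi$, so that a separation of $2m\pi$ with $1\le m\le n-1$ is matched by link $(m,m+1)$ irrespective of orientation; translating this combinatorial matching back into a simultaneous physical encounter is the step that needs the most care.
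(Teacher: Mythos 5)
Your proposal is correct and takes essentially the same route as the paper's own proof: the paper likewise reduces, via $\starv+\IsolRes=n-1$, to showing that two starving robots are impossible, takes their ring separation $2k\pi$ from Lemma~\ref{lem:robots_distance}, and forces a meeting at the link between $C_k$ and $C_{k+1}$ because the section of the single ring covering $C_1,\dots,C_k$ between the two crossings of that link has length exactly $2k\pi$. The orientation/simultaneity subtlety you flag at the end is handled in the paper by precisely the observation you propose (when one robot enters the sub-chain at that cross, the other is leaving it at the same cross), so your plan has no gap.
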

\begin{proof}
Let $G$ be the communication graph of a system, and it is a simple chain of $n$ trajectories, see Figure~\ref{fig:tree-chain} (the trajectories may not be on a straight line). To prove that $\IsolRes(G)=n-2$ it is enough to prove that $\starv(G)=1$. That is, we can not leave 2 robots in starvation in a SCS on $G$. 
\begin{figure}[ht]
\centering
\includegraphics[scale=0.7,page=2]{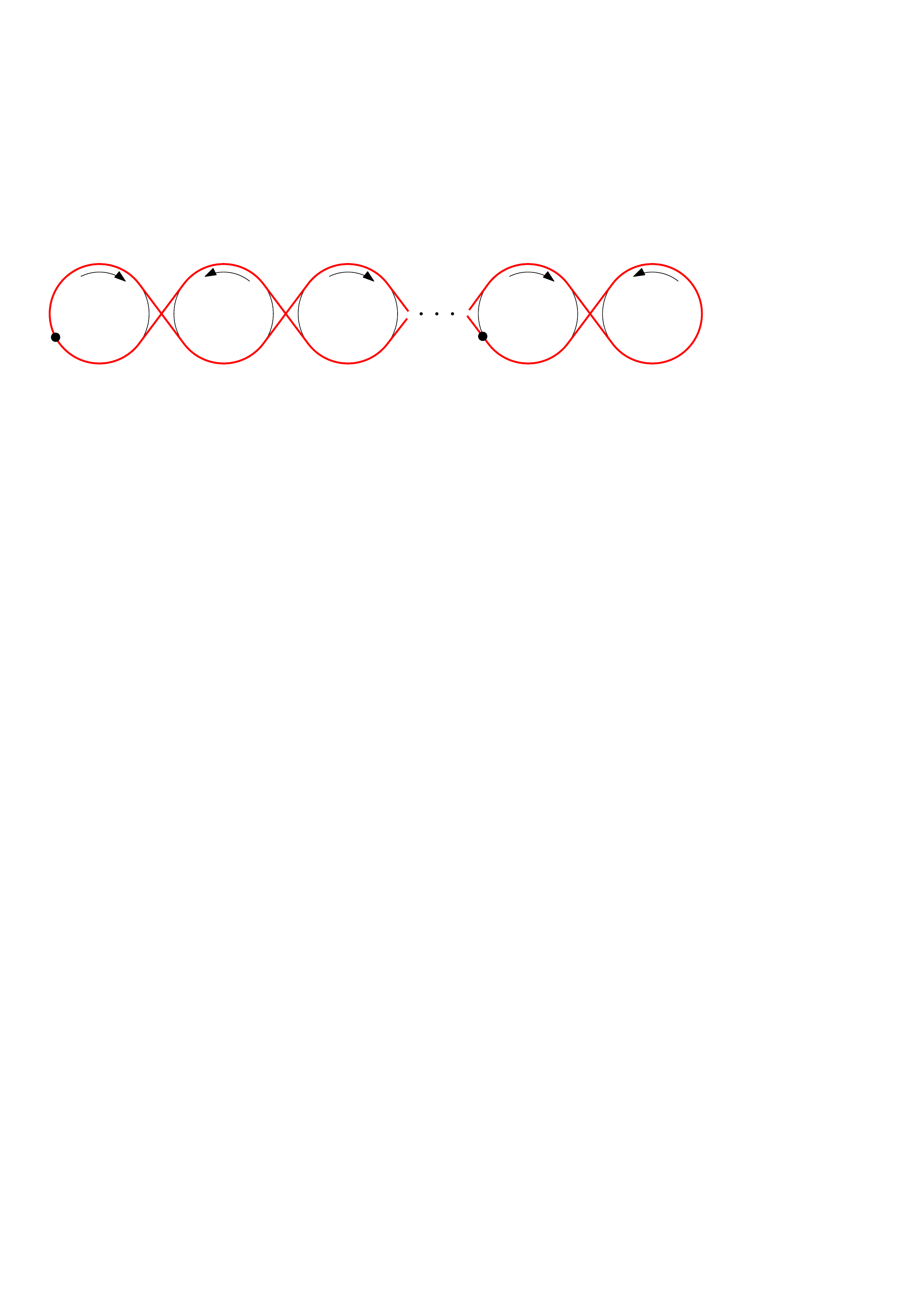}
\caption{The communication graph is a simple chain of trajectories.}
\label{fig:tree-chain}
\end{figure}

Suppose, for the sake of contradiction, that there exist 2 robots $p$ and $q$ in starvation in a SCS on $G$. Let $2k\pi$ be the length of the ring's section from $p$ to $q$ (following the motion direction of the ring), see Lemma~\ref{lem:robots_distance}. Note that $k<n$ because, otherwise $p$ and $q$ are the same robot. Let $C_k$ be the $k$-th trajectory from a leaf in $G$. In the cross between $C_k$ and $C_{k+1}$, let $A$ be the point where the ring enters in $C_k$, and let $B$ be the point where the ring leaves $C_k$, see Figure~\ref{fig:tree-chain}. The length of the ring's section from $A$ to $B$ is $2k\pi$ because it includes the $k$ first trajectories. Thus, when $p$ is entering in $C_k$ through $A$ then the robot $q$ is leaving $C_k$ through $B$. Therefore, $p$ and $q$ are in the cross between $C_k$ and $C_{k+1}$ at the same time, thus they are not starving.
\end{proof}

\begin{lemma} [Isolation-resilience of a tree]
Let $G$ be the communication graph of a system. If $G$ is a tree then $\starv(G)\leq \lfloor n/2 \rfloor$ and this bound is tight.
\end{lemma}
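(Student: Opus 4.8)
The plan is to prove the statement in two parts: first establish the upper bound $\starv(G) \le \lfloor n/2 \rfloor$ for any tree, and then exhibit a family of trees (or a specific construction) on which exactly $\lfloor n/2 \rfloor$ starving robots can be realized, proving tightness. For the upper bound, I would exploit the fact that by Lemma~\ref{lem:tree-single-ring}, a tree has a \emph{single} ring, which by Lemma~\ref{lem:robots_distance} and the ring-length lemmas has length $2n\pi$. The starving robots all live on this one ring, and any two starving robots are separated (along the ring, in the motion direction) by an arc-length in $2\pi\mathbb{N}$, i.e.\ by an integer number of ``units.'' So if there were $k$ starving robots, they partition the ring of total length $2n\pi$ into $k$ gaps, each of length at least $2\pi$; this already gives $k \le n$, which is too weak.

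To sharpen this to $\lfloor n/2 \rfloor$, the key geometric observation must be that in a tree every edge is traversed by the ring \emph{twice} (once in each direction), since the ring is essentially a depth-first Euler tour of the tree's trajectories — this is exactly the ``loop'' structure visible in Figure~\ref{f2}. I would make precise the claim that each trajectory $C_i$ contributes to the ring in a way that visits the leaf-side arcs on a ``downward'' pass and again on the matching ``upward'' pass. The argument from the preceding lemma (simple chain, where $\starv = 1 = \lfloor n/n\rfloor$ is much smaller, but the mechanism is the same) generalizes: for a robot to starve, at the moment it crosses any particular cut-edge of the tree in one direction, the ring's return-crossing of that same edge must be empty of robots, and the integer-separation constraint forces a pairing that blocks robots from being ``packed'' more densely than one per two units. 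Concretely, I would argue that each starving robot, together with the forced-empty return passage it creates, consumes at least two units of ring length, giving $k \le \lfloor n/2\rfloor$.

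For tightness, I would construct an explicit tree realizing the bound — the natural candidate is a \emph{star} or a ``broom''/caterpillar with $\lfloor n/2\rfloor$ leaves arranged so that each leaf and its parent together trap a single starving robot on a loop of length exactly $2\pi\cdot 2 = 4\pi$, independent of the others. I would compute the single ring for this configuration, locate $\lfloor n/2\rfloor$ positions on it that are pairwise separated by the required integer arc-lengths and such that every robot, upon reaching each of its link positions, finds the neighboring trajectory empty (this is where I would invoke Theorem~\ref{thm:not-covered-circle}'s companion reasoning and Corollary~\ref{cor:max_starv}, which says the maximum is attained in full starvation state). Verifying the starvation condition amounts to checking, via Lemma~\ref{aux1}, that the scheduled arrival times at link positions never coincide with a neighbor's presence.

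The main obstacle I anticipate is the upper bound, specifically rigorously justifying the factor of two. The naive length-counting only yields $k \le n$; extracting the extra factor requires a genuinely combinatorial argument about the doubled edge-traversals of the tree's ring and a careful pairing of each starving robot with a ``blocked'' return passage. I would likely proceed by induction on $n$ (peeling a leaf trajectory, as in Lemma~\ref{lem:tree-single-ring}) and track how adding a leaf's loop to the ring can increase the number of simultaneously-starving robots by at most one while increasing $n$ by one — but since we want $\lfloor n/2\rfloor$, the induction step must show that adding a \emph{single} leaf raises the bound only every \emph{other} time, which is the delicate point that will need the synchronization/integer-distance machinery rather than pure combinatorics.
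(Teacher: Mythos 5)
Your setup matches the paper's (single ring by Lemma~\ref{lem:tree-single-ring}, total length $2n\pi$, gaps between starving robots in $2\pi\mathbb{N}$ by Lemma~\ref{lem:robots_distance}), and you correctly diagnose that this alone only yields $k\le n$. But the step you flag as ``the delicate point'' is precisely the step you never supply, and your two candidate mechanisms for it do not close the gap. The paper's argument is short and local: by pigeonhole, if there were $\lfloor n/2\rfloor+1$ starving robots, not all consecutive gaps can be $\geq 4\pi$ (since $(\lfloor n/2\rfloor+1)\cdot 4\pi > 2n\pi$), so some pair $p,q$ sits at ring-distance \emph{exactly} $2\pi$. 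The missing idea is then a property of leaves: a leaf trajectory appears in the ring as a closed loop of length exactly $2\pi$ that enters and exits at the \emph{same} link position. Hence, at the moment $q$ enters a leaf trajectory, the robot $p$, being exactly $2\pi$ ahead, is exiting that same leaf at that same link position, so $p$ and $q$ meet --- contradicting that both starve. This converts the pigeonhole conclusion ``some gap equals $2\pi$'' into a meeting, which is exactly the factor of two you were seeking.

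Your proposed substitutes do not work as stated. The ``each starving robot consumes two units via a forced-empty return passage'' charging argument is not a proof: starvation is a condition about all future times, and nothing in your sketch identifies a specific length of ring that each robot exclusively blocks; formalizing it essentially requires showing no two starving robots are at distance $2\pi$, i.e.\ the leaf argument above. The induction on peeling a leaf is worse off: $\starv$ is a global dynamic quantity, and there is no obvious monotonicity relating the starving configurations of $T$ to those of $T'$ (a set of robots starving in $T'$ need not extend to one in $T$, nor conversely), so the claim that the bound ``rises only every other time'' has no mechanism behind it. Your tightness plan (explicit star/caterpillar construction with robots spaced at even multiples of $2\pi$) is reasonable and in the spirit of the paper, which simply exhibits examples in Figures~\ref{fig:tree_bound1} and \ref{fig:tree_bound3}; but as written it too is a plan rather than a verification.
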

\begin{proof}
By Lemma \ref{lem:tree-single-ring}, there is only one ring in the SCS.
The length of the ring is $2n\pi$. Suppose, for the sake of contradiction, that there are at least $\lfloor n/2\rfloor+1$ starving robots. If the distance between every pair of consecutive robots in the ring is greater than or equal to $4\pi$ then the length of the ring is greater than or equal to $(\lfloor n/2\rfloor +1)4\pi$ and this is strictly greater than $2n\pi$. Then there must be two robots $p$ and $q$ such that $p$ is at distance $2\pi$ ahead of $q$. Consider the moment when $q$ enters a trajectory $C$ that corresponds to a leaf on $G$. In that moment $p$ is leaving the trajectory $C$ and meets $q$, thus they are not starving.

Examples with $\lfloor n/2\rfloor$ starving robots are shown in the figures \ref{fig:tree_bound1} and \ref{fig:tree_bound3}.
\end{proof}

Using the previous result and Lemma~\ref{lem:resilience_rel_starv} we obtain the following theorem:
\begin{theorem}
Let $G$ be the communication graph of a system. If $G$ is a tree then $\IsolRes(G)\geq\lceil n/2\rceil - 1$.
\end{theorem}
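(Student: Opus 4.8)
The plan is to derive the bound on the isolation-resilience directly from the just-established bound on the starvation-number, using the relation proven earlier. Recall that by the Lemma immediately preceding this theorem (the ``Isolation-resilience of a tree'' lemma), for a tree communication graph $G$ we have $\starv(G) \leq \lfloor n/2 \rfloor$. Separately, the Remark derived from Lemma~\ref{lem:resilience_rel_starv} (together with the definitions of starvation-number and isolation-resilience of a communication graph) gives the exact identity $\starv(G) + \IsolRes(G) = n-1$.

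The argument is then a one-line algebraic manipulation. First I would rewrite the identity as $\IsolRes(G) = n - 1 - \starv(G)$. Then, substituting the upper bound $\starv(G) \leq \lfloor n/2 \rfloor$, I obtain
\[
\IsolRes(G) = n - 1 - \starv(G) \geq n - 1 - \left\lfloor \frac{n}{2} \right\rfloor.
\]
The remaining step is purely arithmetic: to verify that $n - \lfloor n/2 \rfloor = \lceil n/2 \rceil$. This is a standard floor--ceiling identity that holds for all integers $n$ (splitting into the even and odd cases, or invoking $\lfloor x \rfloor + \lceil x \rceil$-type identities with $x = n/2$ when $n$ is odd, and the trivial equality when $n$ is even). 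Combining this with the displayed inequality yields exactly $\IsolRes(G) \geq \lceil n/2 \rceil - 1$, as claimed.

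There is essentially no obstacle here; the theorem is a direct corollary. The only point requiring any care is the direction of the inequality: because the starvation-number appears with a negative sign in the identity, an \emph{upper} bound on $\starv(G)$ becomes a \emph{lower} bound on $\IsolRes(G)$, which is consistent with the theorem stating ``$\geq$'' rather than equality. I would also note that equality in the theorem is not asserted precisely because the preceding lemma only establishes $\lfloor n/2 \rfloor$ as a \emph{tight} upper bound achieved by \emph{some} trees (e.g.\ the examples in Figures~\ref{fig:tree_bound1} and \ref{fig:tree_bound3}), not by every tree; hence the matching lower bound on resilience is what we can guarantee in general. No separate treatment of connectivity is needed, since a tree is by definition connected.
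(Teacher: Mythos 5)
Your proposal is correct and matches the paper's own (very terse) proof exactly: the paper derives this theorem by combining the preceding lemma $\starv(G)\leq\lfloor n/2\rfloor$ with the identity $\starv(G)+\IsolRes(G)=n-1$ from Lemma~\ref{lem:resilience_rel_starv}, which is precisely your argument with the floor--ceiling arithmetic spelled out. Your additional remark on why only an inequality (not equality) can be asserted is a sensible clarification but does not change the substance.
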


\subsubsection{Isolation-Resilience of Cycles}
The following results are about systems where the communication graph is a simple cycle.

\begin{figure}[h]
\centering
\begin{subfigure}[b]{.5\textwidth}
\centering
\includegraphics[scale=.9]{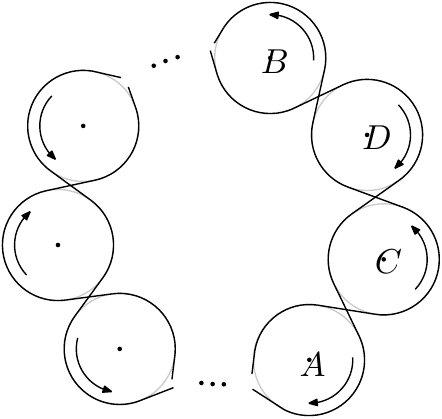}
\caption{}
\label{fig:cyc_tworings1}
\end{subfigure}%
\begin{subfigure}[b]{.5\textwidth}
\centering
\includegraphics[scale=.9,page=2]{cycle_rings.pdf}
\caption{}
\label{fig:cyc_tworings2}
\end{subfigure}

\begin{subfigure}[b]{.5\textwidth}
\centering
\includegraphics[scale=.9,page=3]{cycle_rings.pdf}
\caption{}
\label{fig:cyc_tworings3}
\end{subfigure}%
\begin{subfigure}[b]{.5\textwidth}
\centering
\includegraphics[scale=.9,page=4]{cycle_rings.pdf}
\caption{}
\label{fig:cyc_tworings4}
\end{subfigure}%
\caption{Induction proof of Lemma~\ref{lem:cyc_tworings}.}
\end{figure}

\begin{lemma}\label{lem:cyc_tworings}
Let $G$ be the communication graph of a system of robots. If $G$ is a cycle graph then it has two rings, one with CW direction and other with CCW direction. Also, every edge of $G$ corresponds to a cross between the two rings.
\end{lemma}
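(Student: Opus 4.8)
The plan is to exploit the defining feature of a cycle graph: every node has degree exactly two. Writing the circles as a cyclic sequence $C_1,C_2,\dots,C_n,C_1$ with $C_i$ linked to $C_{i-1}$ and $C_{i+1}$ (indices mod $n$), each trajectory $C_i$ carries exactly two link positions, $\phi_{i,i-1}$ and $\phi_{i,i+1}$, which split $C_i$ into exactly two arcs. The key local observation I would establish first is this: when a starving robot enters $C_i$ at the link position facing one neighbor, the only other link position it can reach while moving in the direction $g(C_i)$ is the one facing the \emph{other} neighbor, because $C_i$ has no third link point. Hence a robot that crosses from $C_{i-1}$ into $C_i$ necessarily leaves $C_i$ toward $C_{i+1}$, and symmetrically a robot entering from $C_{i+1}$ leaves toward $C_{i-1}$. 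Note this is purely combinatorial and independent of the particular value of $g(C_i)$: the direction assignment only selects \emph{which} of the two arcs is traversed, consistent with Lemma~\ref{lem:same_ring_collection}. (Since an opposite-direction synchronization exists, $G$ is bipartite, so $n$ is even and a valid alternating $g$ is available; the counting below does not otherwise use $g$.)

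From this monotonicity I would identify the two rings explicitly. The ``forward'' ring enters each $C_i$ at $\phi_{i,i-1}$ and leaves at $\phi_{i,i+1}$, so its circle sequence is $C_1\to C_2\to\dots\to C_n\to C_1$, returning to its start after visiting each of the $n$ circles exactly once; thus it is a single closed path (Lemma~\ref{lem:closed_rings}) using exactly one arc of every circle. The ``backward'' ring does the reverse, entering at $\phi_{i,i+1}$ and leaving at $\phi_{i,i-1}$, traversing $C_1\to C_n\to C_{n-1}\to\dots$, and using the complementary arc of every circle. These two rings use disjoint (complementary) arcs on each $C_i$, so they are distinct, and together they account for all $2n$ arcs of the system. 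By Lemma~\ref{lem:disjoint_rings} the union of all trajectories decomposes into pairwise disjoint rings, and since every arc already lies on one of these two, there can be no others: there are exactly two rings.

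For the orientation claim I would argue that the two rings traverse the cyclic sequence of circles in opposite cyclic orders (increasing versus decreasing index), so as closed curves they wind oppositely about the arrangement, giving one CW and one CCW ring in the sense of Remark~\ref{rem:one_direction}. Finally, for the crossing statement I would examine an arbitrary edge $(i,i+1)$: its link is the pair $\phi_{i,i+1}$ on $C_i$ and $\phi_{i+1,i}$ on $C_{i+1}$. The forward ring transits $C_i\to C_{i+1}$ through this link while the backward ring transits $C_{i+1}\to C_i$ through the same link, i.e.\ the two rings meet there and exchange circles — precisely the swapping behaviour of Lemma~\ref{lem:invariant} (Figure~\ref{fig:cycle_invariant_with_neighbor}) — so every edge is a crossing of the two rings. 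The main obstacle I anticipate is making this last step airtight: I must verify that the two ring-segments genuinely cross (swap circles) at each link rather than merely coincide, and reconcile the global CW/CCW labelling with the fact that on each individual circle both rings rotate in the same local direction $g(C_i)$; the cleanest resolution is to read off the crossing directly from the starving-motion graph (Remark~\ref{rem:ring_sequence}), where the two rings correspond to the two edge-disjoint directed cycles meeting at the paired $v$- and $w$-nodes of each link.
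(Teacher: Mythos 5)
Your proof is correct, but it takes a genuinely different route from the paper. The paper proceeds by induction on the (even) number of trajectories: the base case is the $4$-circle cycle, and the inductive step removes two consecutive trajectories from a $2(n+1)$-cycle, glues the severed ends to obtain a $2n$-cycle, invokes the inductive hypothesis, and then reinserts the two circles, arguing (essentially via figures) that the ring sections in the reinserted circles splice consistently into the two existing rings. You instead argue directly and non-inductively from the degree-two structure: each circle carries exactly two link positions, so a starving robot entering from one neighbor must exit toward the other, which forces exactly the ``forward'' ring $C_1\to C_2\to\dots\to C_n\to C_1$ and the ``backward'' ring on the complementary arcs; since these two rings already exhaust all $2n$ arcs, Remark~\ref{rem:ring_trajectory_decomposition} and Lemma~\ref{lem:disjoint_rings} rule out any further ring, and the crossing claim falls out because each edge is transited by the forward ring in one direction and by the backward ring in the other. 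Your construction is more self-contained and makes the ``every edge is a crossing'' claim explicit rather than inherited from pictures, and it localizes the only informal point (the global CW/CCW labelling, which is equally informal in the paper) to the observation that the two rings traverse the circle sequence in opposite cyclic orders; the paper's induction, by contrast, is more visual but leans on the figure-justified reinsertion step. Both arguments use the same background lemmas, so your version could serve as a drop-in replacement with arguably fewer unverified steps.
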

\begin{proof}
We proceed by induction on the number of trajectories in the system. We have that the cycle graphs are formed by an even number of trajectories, $2n$. For $n=2$, the cycle graph has 4 circles, see Figure~\ref{fig:ring_sample}(b), and the claim holds. Assume as inductive hypothesis that for a fixed value $n$ every cycle graph with $2n$ trajectories the claim holds. Now, let us consider a cycle graph $G$ with $2(n+1)$ trajectories (Figure~\ref{fig:cyc_tworings1}). If we remove two consecutive trajectories and stick the ends in the broken section we obtain a cycle graph $G'$ with $2n$ trajectories (Figure~\ref{fig:cyc_tworings2}). The claim holds for $G'$ by inductive hypothesis, Figure~\ref{fig:cyc_tworings3} shows the clockwise ring using normal stroke and the other using dashed stroke. The sections of rings in the removed two circles can be drawn like in Figure~\ref{fig:cyc_tworings3}. After that, inserting again the two removed trajectories into the original position we obtain the cycle graph $G$ (with $2(n+1)$ circles) holding the claim, see Figure~\ref{fig:cyc_tworings4}. Note that, as depicted in Figure~\ref{fig:cyc_tworings3}, when we insert the removed trajectories, the solid-circle point in the ring's section of the removed trajectories matches with the solid-circle point in the graph $G'$, analogously with the non-solid-circle points, the solid-square points and the non-solid-square points.
\end{proof}

%\begin{figure}[h]
%\centering
%\includegraphics[scale=.8]{thm_invariant.pdf}
%
%\caption{Two rings in a cycle.}
%\label{fig:rings_in_cycle}
%\end{figure}

\begin{lemma}\label{lem:starve_iff_same_cycle}
A set of robots in a cycle graph starves if and only if they belong to the same ring.
\end{lemma}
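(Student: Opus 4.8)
The statement to prove is Lemma~\ref{lem:starve_iff_same_cycle}: in a cycle graph, a set of robots starves if and only if they all belong to the same ring.

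\medskip

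The plan is to prove the two directions separately, leaning heavily on the structural result of Lemma~\ref{lem:cyc_tworings}, which tells us that a cycle graph has exactly two rings (call them the CW ring $R_1$ and the CCW ring $R_2$), and that \emph{every edge of $G$ is a crossing point of these two rings}. This last fact is the crucial tool: at each edge, the two rings meet, so whenever a robot traveling along $R_1$ reaches a link position, the companion link position on the neighboring trajectory lies on $R_2$, and vice versa.

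\medskip

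For the forward direction (if a set starves, they lie in the same ring), I would argue the contrapositive. Suppose two robots $a$ and $b$ do not belong to the same ring, so $a \in R_1$ and $b \in R_2$. First I would use Lemma~\ref{lem:robots_distance} together with Lemma~\ref{lem:cyc_tworings} to understand how the two rings are interleaved at the edges. The key observation is that since each edge is a crossing of $R_1$ and $R_2$, at the moment a robot on $R_1$ occupies one side of a link, a point of $R_2$ occupies the companion side. Because by Lemma~\ref{lem:robots_distance} the distances between robots within a single ring are multiples of $2\pi$ (one full tour of time), the timing along each ring is rigidly phase-locked. I would then show that there must exist an edge $(i,j)$ and a time at which $a$ arrives at $\phi_{ij}$ precisely when $b$ arrives at $\phi_{ji}$; at that instant they meet and neither starves. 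The heart of this step is a counting/phase argument: walking $a$ forward along $R_1$ and tracking the companion $R_2$-positions at each successive crossing, one shows the phase offset between $a$'s crossings and $b$'s position runs through all the relevant residues, forcing a coincidence.

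\medskip

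For the reverse direction (if they all belong to the same ring, they starve), I would show that two robots on the \emph{same} ring can never meet at a link. By Lemma~\ref{lem:robots_distance}, the separation along the ring between any two robots in the same ring is a nonzero multiple of $2\pi$, i.e.\ an integer number of time-units of travel. Since every link in a cycle graph is a crossing of the two \emph{distinct} rings (Lemma~\ref{lem:cyc_tworings}), a meeting at a link requires one robot coming in along $R_1$ and its partner arriving along $R_2$ on the companion trajectory; two robots both on $R_1$ can occupy the two ends of an edge only if they were never on opposite rings, which cannot produce a synchronized encounter. Combined with Lemma~\ref{lem:invariant} (the number of robots per ring is invariant), if every surviving robot sits on the same ring, then the other ring is empty, so every link position a robot reaches has an empty companion side on the opposite ring, and the robot passes through without meeting anyone — precisely the definition of starvation.

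\medskip

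I expect the main obstacle to be the forward direction: making rigorous the phase-coincidence argument that two robots on \emph{different} rings are guaranteed to meet at some edge. The difficulty is bookkeeping the relative phases of $a$ and $b$ as they traverse rings of possibly different lengths, and ruling out the degenerate possibility that they are perpetually offset at every crossing. I would handle this by reducing to arrivals at a single fixed edge $(i,j)$: track, over successive tours, the pair of arrival times of $a$ at $\phi_{ij}$ and of the $R_2$-point at $\phi_{ji}$, and use the integrality from Lemma~\ref{lem:robots_distance} (distances in $2\pi\mathbb{N}$, equivalently integer time shifts) to force the offset to vanish, so that the two arrivals eventually coincide.
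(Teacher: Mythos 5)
Your reverse direction (same ring $\Rightarrow$ starvation) is sound and is essentially the paper's own argument: by Lemma~\ref{lem:cyc_tworings} every edge of a cycle graph is a crossing of the two rings, so a meeting at a link requires one robot arriving along each ring; if all surviving robots sit on one ring, the other ring is empty (Lemma~\ref{lem:invariant}), and no meeting can ever occur.

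The forward direction, however, has a genuine gap, and it sits exactly where you flagged the difficulty. Your plan fixes an edge $(i,j)$ and claims that integrality of the time offsets forces the arrival sequences of $a$ at $\phi_{ij}$ and of $b$ at $\phi_{ji}$ to coincide. But if $2\pi k_1$ and $2\pi k_2$ are the lengths of the two rings, those arrival sequences are arithmetic progressions with periods $k_1$ and $k_2$; synchronization does make their offset an integer, yet two integer progressions with periods $k_1,k_2$ intersect if and only if $\gcd(k_1,k_2)$ divides the offset. The offsets do \emph{not} ``run through all the relevant residues'' --- they only cycle through residues modulo $\gcd(k_1,k_2)$. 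This is not a vacuous worry: in a regular hexagonal cycle graph the two rings have lengths $4\pi$ and $8\pi$, so $k_1=2$, $k_2=4$, $\gcd=2$, and an odd offset at a given edge is never resolved \emph{at that edge}; the robots (who do meet, since the lemma is true) may meet only at a different edge, so the reduction to one fixed edge cannot work as stated. Note also that Lemma~\ref{lem:robots_distance} constrains distances between robots \emph{within the same ring}, so it gives you nothing about the relative phase of $a$ and $b$ on different rings. The paper closes this direction without any number theory: by Lemma~\ref{lem:cyc_tworings} the two rings wind around the cycle in opposite rotational senses, so $a$ and $b$ approach one another; by Theorem~\ref{thm:no_2r_in_traject} a trajectory holds at most one robot, so they cannot slip past each other and must eventually occupy adjacent trajectories; the synchronization schedule then places them simultaneously at the two endpoints of the shared link, where they meet and exchange information, contradicting starvation. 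You would need this dynamic/geometric argument (or an equivalent one) to repair your forward direction.
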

\begin{proof}
First we prove that if a set of robots starves then they belong to the same ring. We prove it by contradiction. Suppose that they do not belong to the same ring, then each ring in the cycle has at least one robot. In an arbitrary instant of time, select a robot from each ring, say  $u$ and $v$, respectively. One of them is flying in CW direction and the other in CCW direction (Lemma~\ref{lem:cyc_tworings}), therefore, after a while $u$ and $v$ will be in adjacent circles (trajectories) $C_i$ and $C_j$, respectively. By using Theorem~\ref{thm:no_2r_in_traject}, we deduce that it will never happen that $v$ cross to $C_i$ if $u$ is still there. Analogously, it will never happen that $u$ cross to $C_j$ if $v$ is still in $C_j$. Therefore, $u$ and $v$ simultaneously cross to $C_j$ and $C_i$ respectively. This is a contradiction because at that instant they meet each other and share information (no starve). As a consequence, they stay in theirs respective trajectories and do not cross.

If they are in the same ring then they starve. Suppose, on the contrary, that they do not starve, then there exist two robots that share info on an edge of the communication graph. From Lemma~\ref{lem:cyc_tworings} we have that every edge is a crossing between the two rings, thus if they share info then they are in different rings, a contradiction.
\end{proof}

\begin{remark}
For non-cycles graphs, this property is false as shown in Figure~\ref{fig:starving_two_cycles}.
\end{remark}

\begin{figure}[h]
\centering
\includegraphics[scale=1, page=4]{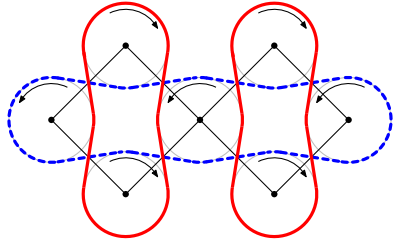}
\caption{There may be starving robots in the two rings drawn using continuous stroke.}
\label{fig:starving_two_cycles}
\end{figure}

Using the previous lemma we deduce that:
\begin{corollary}\label{cor:stn_cycles}
Let $a$ and $b$ the two rings of a system whose communication graph is a simple cycle. Let $N(a)$ and $N(b)$ be the number of robots of $a$ and $b$ respectively, then the starvation number of the graph is $\max\{N(a), N(b)\}$. 
\end{corollary}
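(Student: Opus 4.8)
The plan is to read the starvation-number straight off the two-ring structure of a cycle graph (Lemma~\ref{lem:cyc_tworings}) together with the characterization of starvation in Lemma~\ref{lem:starve_iff_same_cycle} and the invariance of ring populations in Lemma~\ref{lem:invariant}. The guiding observation is that any collection of simultaneously starving robots must live inside a single ring, while conversely an entire ring can be forced to starve by deleting every robot of the other ring. Accordingly I would prove the two matching inequalities, namely that the starvation-number is at most $\max\{N(a),N(b)\}$ and at least $\max\{N(a),N(b)\}$.

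For the upper bound I would fix an arbitrary partial SCS on $G$ and let $S$ be the set of robots that are starving at some chosen time. By the forward direction of Lemma~\ref{lem:starve_iff_same_cycle}, all robots of $S$ belong to the same ring, so $S\subseteq a$ or $S\subseteq b$. Since by Lemma~\ref{lem:invariant} the number of robots in a ring is constant in time and equals $N(a)$ (resp. $N(b)$), this yields $|S|\le\max\{N(a),N(b)\}$. As the choice of partial SCS was arbitrary, the maximum possible number of starving robots cannot exceed $\max\{N(a),N(b)\}$.

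For the lower bound I would assume without loss of generality that $N(a)\ge N(b)$ and remove from the team exactly the $N(b)$ robots lying in ring $b$. By Lemma~\ref{lem:invariant} the $N(a)$ robots of ring $a$ remain in ring $a$; since every surviving robot now belongs to the single ring $a$, the converse direction of Lemma~\ref{lem:starve_iff_same_cycle} guarantees that all of them starve. This exhibits a partial SCS with $N(a)=\max\{N(a),N(b)\}$ starving robots, so the starvation-number is at least $\max\{N(a),N(b)\}$. Combining the two bounds gives the claimed equality (and is consistent with Corollary~\ref{cor:max_starv}, since the extremal configuration is a full starvation state).

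The argument is essentially immediate once both directions of Lemma~\ref{lem:starve_iff_same_cycle} are available, so I do not anticipate a genuine obstacle. The only points that require care are bookkeeping ones: interpreting the phrase \emph{``a set of robots starves''} as \emph{``every robot of the set is starving at the chosen instant''} when applying the forward implication, and invoking the invariance of Lemma~\ref{lem:invariant} to be sure that deleting all robots of one ring leaves the population of the other ring untouched so that the converse implication applies to the full set of survivors.
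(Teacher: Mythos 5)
Your proposal is correct and follows essentially the same route as the paper, which states the corollary as an immediate deduction from Lemma~\ref{lem:starve_iff_same_cycle} (both directions giving the upper and lower bounds, respectively, with Lemma~\ref{lem:invariant} handling the ring populations). You have simply made explicit the bookkeeping that the paper leaves implicit, including the correct reading of ``a set of robots starves.''
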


With this result and using Lemma~\ref{lem:resilience_rel_starv} we conclude that:
\begin{theorem}\label{thm:cycle_graph_resilience}
Let $a$ and $b$ the two rings of a system whose communication graph is a simple cycle. Let $N(a)$ and $N(b)$ be the number of robots of $a$ and $b$ respectively. The isolation-resilience of the graph is $\min\{N(a),N(b)\}-1$.
\end{theorem}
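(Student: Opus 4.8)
The plan is to reduce the statement directly to two facts already in hand: the evaluation of the starvation-number of a cycle graph in Corollary~\ref{cor:stn_cycles}, and the affine relation between starvation-number and isolation-resilience in Lemma~\ref{lem:resilience_rel_starv}. The latter reads $\starv(G)+\IsolRes(G)=n-1$, so once I know $\starv(G)=\max\{N(a),N(b)\}$ it only remains to solve for $\IsolRes(G)$ and simplify. Thus the whole argument is a substitution followed by an elementary identity relating $\max$ and $\min$.

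The one point that deserves explicit verification is the counting identity $n=N(a)+N(b)$, i.e.\ that the two rings partition the full complement of robots. By Lemma~\ref{lem:cyc_tworings} a simple cycle graph has exactly the two rings $a$ and $b$, and by Lemma~\ref{lem:disjoint_rings} every point of every trajectory lies on exactly one ring. Since a SCS with no failures starts with one robot per trajectory, and by Lemma~\ref{lem:ringCapacity} a ring of length $2k\pi$ initially carries exactly $k$ robots, summing the two ring populations recovers the total, giving $N(a)+N(b)=n$.

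With this identity available I would invoke Corollary~\ref{cor:stn_cycles} to write $\starv(G)=\max\{N(a),N(b)\}$, then Lemma~\ref{lem:resilience_rel_starv} to write $\IsolRes(G)=n-1-\starv(G)$. Combining the two and using $n=N(a)+N(b)$ yields $\IsolRes(G)=N(a)+N(b)-1-\max\{N(a),N(b)\}=\min\{N(a),N(b)\}-1$, which is the claim.

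I do not expect a genuine obstacle in this proof: the substantive work is carried by the earlier results, especially Lemma~\ref{lem:starve_iff_same_cycle} (starvation is confined to a single ring), which is exactly what makes the starvation-number of a cycle equal to the larger ring population in Corollary~\ref{cor:stn_cycles}. The only step requiring any care is the identity $n=N(a)+N(b)$; once that is justified from the ring-capacity lemmas, the theorem is pure arithmetic.
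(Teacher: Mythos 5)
Your proposal is correct and follows the same route as the paper, which likewise obtains the theorem by combining Corollary~\ref{cor:stn_cycles} with Lemma~\ref{lem:resilience_rel_starv}. Your explicit justification of the identity $n=N(a)+N(b)$ via Lemmas~\ref{lem:cyc_tworings}, \ref{lem:disjoint_rings} and \ref{lem:ringCapacity} is a small point the paper leaves implicit, but it is the same argument.
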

%\begin{proof}
%Using Lemma~\ref{lem:starve_iff_same_cycle} we have that removing all the robots in $a$ the system falls in starvation state, analogously, if we remove all the robots in $b$ the system falls in starvation state too. Without loss of generality assume that $N(a)\leq N(b)$. Note that removing $c$ robots, with $c<N(a)$, the system does not fall in starvation state because both rings have at least one robot. Thus, $N(a)-1$ is the maximum number of robots that can fail without resulting in starvation of the system. 
%\end{proof}

\subsubsection{Isolation-Resilience of Grids}

%To compute the resilience of a grid communication graph is not necessary to know the number of rings in the system (see Subsection~\ref{sec:resilience_grids}), but has been included the next section because some curious properties appeared studying the number of rings in a grid communication graph.

%\subsubsection{Computing the resilience in grids}\label{sec:resilience_grids}
We start showing how to compute the starvation number of gird communication graphs.

\begin{lemma}\label{lem:same_row_position}
Let $a$ and $b$ be two robots in circles that are in the same row in a grid communication graph. If the position of $a$ is $\alpha$, then the position of $b$ is $\alpha$ or $\pi-\alpha$ (see Figure~\ref{fig:same_row}).
\end{lemma}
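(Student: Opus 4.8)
The plan is to reduce the statement to a single reflection identity across horizontal edges and then iterate it. First I would recall that, by Statement~\ref{stm:swapping} together with Equation~(\ref{sync}), the current position of any robot occupying a circle $C_i$ at a time $t$ equals $f(C_i)+g(C_i)\cdot 2\pi\cdot t$, irrespective of the robot's trajectory-switching history. Consequently the relation between the positions of $a$ and $b$ at a common time $t$ depends only on the synchronization schedules of the two circles they currently occupy, both of which lie in the same row.

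Next I would handle the base case of two horizontally adjacent circles $C_i$ and $C_j$ in that row. Since their centers lie at the same height, the link positions (and hence the supporting line of the edge $(i,j)$) are horizontal, so $\beta_{ij}=0$. Applying Lemma~\ref{lem:synchro} yields $f(C_j)=-f(C_i)\pm\pi$, and because neighbors fly in opposite directions, $g(C_j)=-g(C_i)$. Writing $\alpha=f(C_i)+g(C_i)\cdot 2\pi\cdot t$ for the position of $a$, the position of $b$ becomes
$$f(C_j)+g(C_j)\cdot 2\pi\cdot t=-\bigl(f(C_i)+g(C_i)\cdot 2\pi\cdot t\bigr)\pm\pi=-\alpha\pm\pi.$$
By Remark~\ref{rem:positions_congruence}, both $-\alpha+\pi$ and $-\alpha-\pi$ are congruent to $\pi-\alpha$ modulo $2\pi$, so for adjacent circles the position of $b$ equals exactly $\pi-\alpha$.

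Finally I would induct on the horizontal distance $d\ge 1$ separating the two circles, using that any two circles in a common row of the grid are joined by a path lying entirely within that row. Each horizontal step applies the map $x\mapsto\pi-x$, which is an involution; therefore after $d$ steps the position of $b$ equals $\alpha$ when $d$ is even and $\pi-\alpha$ when $d$ is odd. In either case the position of $b$ is $\alpha$ or $\pi-\alpha$, which is precisely the claim.

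The only delicate points are verifying that $\beta_{ij}=0$ for horizontally aligned circles (a geometric observation that the closest points lie on the line through the centers) and the modular bookkeeping that collapses the sign ambiguity $\pm\pi$ into the single value $\pi-\alpha$. Neither is a genuine obstacle, so I expect the careful statement of the parity tracking in the induction, rather than any real difficulty, to be the main thing to get right.
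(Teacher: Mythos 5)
Your proof is correct and takes essentially the same route as the paper: both reduce to the adjacent-circle case, where the synchronization condition (Lemma~\ref{lem:synchro} with $\beta_{ij}=0$, i.e., Corollary~\ref{cor:adj_opp}) yields the reflection $x\mapsto\pi-x$, and then induct along the row with parity deciding between $\alpha$ and $\pi-\alpha$. Your explicit treatment of the time dependence (using $g(C_j)=-g(C_i)$ so the reflection identity holds at every instant, not just for the starting positions $f$) is a detail the paper glosses over, but the argument is the same.
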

\begin{proof}
Without loss of generality, assume that $a$ is to the left of $b$. Let $\beta$ be the position of $b$. We prove the lemma by induction on the number of circles between $a$ and $b$.

If $a$ and $b$ are in adjacent circles, then by using Lemma~\ref{cor:adj_opp} we obtain that $\beta=\pi-\alpha$.

If between $a$ and $b$ there is a circle, then let $\gamma$ be the position of the robot between $a$ and $b$, using Lemma~\ref{cor:adj_opp} we obtain that $\gamma=\pi-\alpha$. Again applying Lemma~\ref{cor:adj_opp} between $\gamma$ and $\beta$ we obtain that $\beta=\pi-\gamma=\pi-\pi+\alpha=\alpha$.

Assume as inductive hypothesis that for a fixed value $k\geq1$ if the number of robots between $a$ and $b$ is $k$ then $\beta=\alpha$ or $\beta=\pi-\alpha$.

Suppose now that the number of robots between $a$ and $b$ is $k+1$, let $\gamma$ be the position of the robot before $b$ in the row. By inductive hypothesis we have that $\gamma=\alpha$ or $\gamma=\pi-\alpha$. By using Lemma~\ref{cor:adj_opp} we obtain that $\beta=\gamma$ or $\beta=\pi-\gamma$, and replacing $\gamma$ by its value the result follows.
\end{proof}
\begin{figure}[h]
\centering
\includegraphics{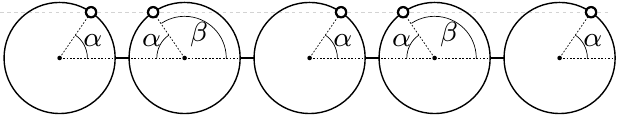}
\caption{The white points represent the positions of the robots in the same row of a grid communication graphs,  $\beta=\pi-\alpha$.}
\label{fig:same_row}
\end{figure}
\begin{lemma}\label{lem:same_row_always_same_row}
Suppose that at time $t_0$ there exist two starving robots in circles that are in the same row. Then, at any time after $t_0$ they will be in circles that are in the same row.
\end{lemma}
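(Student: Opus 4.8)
The plan is to follow the row index of each robot as a function of time and to show that the two robots can only change rows simultaneously and in the same direction, so that the same-row relation is a dynamical invariant. First I would record the consequence of starvation: since $a$ and $b$ are starving, by definition every link position either of them reaches has an empty neighbouring trajectory, so by Statement~\ref{stm:swapping} each robot crosses to the adjacent trajectory at the very first link position it reaches (hence link angles are never interior points of the arcs it traverses). In a grid the four link angles of an interior circle are $0$ and $\pi$ (left/right, horizontal) and $\tfrac{\pi}{2}$ and $\tfrac{3\pi}{2}$ (top/bottom, vertical). A horizontal crossing keeps the robot in its row, a vertical crossing moves it to an adjacent row, and while a robot travels an arc its row does not change; therefore a robot's row index changes \emph{precisely} when it crosses a vertical link, i.e. when it sits at angle $\tfrac{\pi}{2}$ or $\tfrac{3\pi}{2}$ and a vertical neighbour exists.

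The synchronisation engine is Lemma~\ref{lem:same_row_position}: while $a$ and $b$ occupy circles in the same row, their angular positions $\alpha$ and $\beta$ satisfy $\beta=\alpha$ or $\beta=\pi-\alpha$. The decisive observation is that the two vertical link angles are exactly the fixed points of the reflection $\alpha\mapsto\pi-\alpha$ modulo $2\pi$, since $\pi-\tfrac{\pi}{2}=\tfrac{\pi}{2}$ and $\pi-\tfrac{3\pi}{2}=-\tfrac{\pi}{2}\equiv\tfrac{3\pi}{2}$, whereas the horizontal angles $0$ and $\pi$ are interchanged by this reflection. Consequently, in either case of the lemma, $a$ is at the top link exactly when $b$ is at the top link, and likewise for the bottom link, while horizontal crossings are irrelevant to the row. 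Moreover the grid structure makes the existence of a vertical neighbour uniform along a row: every circle of a given row has a top (respectively bottom) neighbour if and only if that row is not the first (respectively last). Thus, whenever $a$ and $b$ share a row, they reach the same type of vertical link at the same instant, both have (or both lack) the corresponding neighbour, and a crossing sends each from row $i$ to the \emph{same} adjacent row.

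I would then close the argument by contradiction. Suppose $\tau>t_0$ is the first instant at which $a$ and $b$ lie in different rows. Because row membership is piecewise constant, changing only at the discrete crossing events, immediately before $\tau$ the robots share a row, and at $\tau$ exactly one of them, say $a$, crosses a vertical link while $b$ does not. Applying Lemma~\ref{lem:same_row_position} to the configuration just before $\tau$ (where they are still in a common row) forces $\beta$ to equal the vertical link angle of $a$; since $a$ actually crossed, its row possesses a vertical neighbour on that side, hence so does the circle of $b$, and $b$ being starving forces $b$ to cross the same way at the same instant, landing in the same adjacent row as $a$. This contradicts the choice of $\tau$, so no such $\tau$ exists and the two robots share a row for all $t\ge t_0$.

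The main obstacle, I expect, is the bookkeeping that converts ``robot reaches a vertical link position'' into ``robots change rows in lockstep'': one must verify that a starving robot crosses at the first link position it meets (so no vertical link is silently skipped inside an arc), that the reflection $\alpha\mapsto\pi-\alpha$ genuinely fixes only the vertical links, and that the boundary rows are handled so that $a$ and $b$ either both cross or both merely pass through the top/bottom point. Once these geometric facts are pinned down, the time evolution reduces to a clean induction over the finitely many crossing events.
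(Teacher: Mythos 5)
Your proposal is correct and follows essentially the same route as the paper's proof: both hinge on Lemma~\ref{lem:same_row_position} together with the observation that the vertical link angles $\tfrac{\pi}{2}$ and $\tfrac{3\pi}{2}$ are fixed points of the reflection $\alpha\mapsto\pi-\alpha$, so the two starving robots reach top/bottom links simultaneously and cross into the same adjacent row in lockstep. Your version merely makes explicit some points the paper leaves implicit (that starving robots always cross, the uniform existence of vertical neighbours along a row, and a first-bad-time contradiction in place of the paper's direct forward argument), which is sound but not a different method.
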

\begin{proof}
Suppose that $a$ and $b$ are in the same row, say the $i$-th row. Using the previous lemma, if the position of $a$ is $\alpha$ then the position of $b$ is $\alpha$ or $\pi-\alpha$. Using that the robots are moving at the same constant speed we have that, after a short time interval, the position of $a$ is $\alpha+\varDelta$ and the position of $b$ is $\alpha+\varDelta$ or $\pi-(\alpha+\varDelta)$ (still in the same row $i$). Thus, if $i>1$, when $a$ is at the position $\pi/2$, the position of $b$ is $\pi/2$ too. In that instant, they pass to the row $i-1$ and it holds the relation between their positions. Analogously, if $i$ is not the bottom row then when the position of $a$ is $3\pi/2$ the position of $b$ is $3\pi/2$ too, and they pass to the row $i+1$ and it holds the relation between their positions.
\end{proof}

Now, let us return to the diagram of movement of a grid communication graph, see Figure~\ref{fig:diag-diagram}. Let $G$ be a grid communication graph and let $H$ be its simplified diagram of movement. Two starving robots in circles that are in the same row in $G$ will reach hitting points or contact points with the same row in $H$. We focus on the study of the movement of these robots in the diagram $H$, that is, we look at their respective positions in $H$ every quarter of system period.

\begin{remark}\label{rem:keeping_row}
Let $a$ and $b$ be two robots in circles that are in the same row. If the position of $a$ in the diagram of movement is $(i,j)$, then, the position of $b$ in the diagram of movement is $(i,r)$ with $|r-j|$ even.
\end{remark}

There are four possible directions of movement for a starving robot in the simplified diagram of movement: up and left $(-1,-1)$, up and right $(-1,+1)$, down and left $(+1,-1)$, and down and right $(+1,+1)$, see Figure~\ref{fig:movement_dirs}.

\begin{figure}
\centering
\includegraphics[page=2]{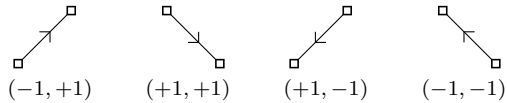}
\caption{The four possible movement direction in the simplified diagram of movement.}
\label{fig:movement_dirs}
\end{figure}

When a starving robot hits a wall then it changes its current movement direction. The following table shows the possible cases of direction changes.

\begin{table}[!h]
\centering
\renewcommand{\arraystretch}{1.3}
\begin{tabular}{|c||c|c|c|c|}
\hline
\diaghead{\theadfont \hspace*{2.35cm}}{Direction}{Hit wall} & Left & Top & Right & Bottom\\
\hline
\hline
$(-1,+1)$ & -- & $(+1,+1)$ & $(-1,-1)$ & -- \\
\hline
$(+1,+1)$ & -- & -- & $(+1,-1)$ & $(-1,+1)$\\
\hline
$(+1,-1)$ & $(+1,+1)$ & -- & -- & $(-1,-1)$\\
\hline
$(-1,-1)$ & $(-1,+1)$ & $(+1,-1)$ & -- & --\\ 
\hline
\end{tabular}
\caption{Changes of movement direction when a robot hits a wall.}
\label{tab:dirs_changes}
\end{table}

The leftmost column of the Table~\ref{tab:dirs_changes} (headers of the rows) contains the movement direction of the robot before it hits the wall. The top row (headers of the columns) contains the walls of the grid. The inner cells contains the direction of movement of the robot after it hits the wall indicated by the column header moving in the direction indicated by the row header. For example: if a robot is moving in direction $(-1,-1)$ and hits the left wall then it changes its new direction of movement is $(-1,+1)$, thus, the value of the cell $\left[ (-1,-1), \text{Left}\right]$ is $(-1,+1)$. The value ``--'' in the cell $[X,Y]$ indicates that a starving robot moving in direction $X$ does not hit the $Y$ wall, for example: $[(-1,-1), \text{Right}]=\text{--}$.

\begin{remark}\label{rem:changes_dirs_components}
The movement direction has two components, the first indicate the direction of the vertical movement and the second indicates the direction of the horizontal movement. The Table~\ref{tab:dirs_changes} shows that when a robot hits a horizontal wall (top or bottom) only changes its vertical direction, and when it hits a vertical wall (left or right) only changes its horizontal direction.
\end{remark}

\begin{theorem}\label{thm:starv_no_same_row}
Two starving robots can not be in circles that are in the same row in a grid communication graph.
\end{theorem}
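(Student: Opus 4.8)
The plan is to argue by contradiction: assume $a$ and $b$ are two starving robots lying in circles of the same row at some time $t_0$, and derive that they must meet, contradicting that they both starve. First I would invoke Lemma~\ref{lem:same_row_always_same_row} to fix, once and for all, that $a$ and $b$ occupy circles in the same row at every quarter-period after $t_0$; by Remark~\ref{rem:keeping_row} their positions in the simplified diagram of movement (Figure~\ref{fig:diag-diagram}) are $(i,j_a)$ and $(i,j_b)$ with a common row index $i$ and with $j_a-j_b$ even. The key reduction is to decouple the two coordinates of the diagonal motion. Since the two robots share the diagram row at all times, their vertical components move in lockstep: they reach the top and bottom walls simultaneously, and by Remark~\ref{rem:changes_dirs_components} such hits flip only the (common) vertical direction. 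Consequently the horizontal coordinate of each robot evolves as an \emph{independent} one-dimensional reflecting walk on $\{0,1,\dots,2m\}$ that advances one column per quarter-period and reverses only at the left or right wall.

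Next I would unfold each reflecting horizontal walk to uniform motion on a cycle of length $4m$ (the standard billiard unfolding), writing the unfolded positions $p_a(t)$ and $p_b(t)$, each advancing at unit rate, and recalling that two unfolded positions fold to the same column exactly when $p_a\equiv\pm p_b\pmod{4m}$. Because both advance at the same rate, one of these two congruences reduces to a linear condition $2t\equiv c\pmod{4m}$ whose constant $c$ is even \emph{precisely} because $j_a-j_b$ is even; this is the heart of the argument. Solving it yields an integer quarter-period $t^\ast\ge t_0$ at which $a$ and $b$ share a column, and since they always share a row they occupy the same diagram vertex at time $t^\ast$. (If the two walks instead coincided for all $t$, the robots would be permanently superimposed, already impossible by Theorem~\ref{thm:no_2r_in_traject}.)

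I would then close the argument by a short case analysis on the common vertex, using that a starving robot only ever visits diagram vertices that are wall-hitting points or contact points between neighbouring circles. If the shared vertex is interior, it is a contact point, i.e.\ a link position, so $a$ and $b$ meet and exchange information there, contradicting that both starve. If it lies on a wall, it is the extreme point of a single boundary circle, so the two robots simultaneously occupy the same circle, contradicting Theorem~\ref{thm:no_2r_in_traject}. Either way a contradiction ensues, proving no two starving robots can lie in circles of the same row.

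I expect the main obstacle to be the middle step: making the reduction to an independent horizontal motion fully rigorous (justifying through Remark~\ref{rem:changes_dirs_components} that the vertical wall-reflections of one robot do not disturb the horizontal schedule of the other) and, above all, proving that a genuine coincidence time $t^\ast$ exists. The parity bookkeeping—that $j_a-j_b$ even forces solvability of $2t\equiv c\pmod{4m}$ at an integer quarter-period, rather than the two particles merely crossing the same column at unaligned half-steps—is the delicate point, and it is exactly where Remark~\ref{rem:keeping_row} must be used.
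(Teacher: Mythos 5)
Your proposal is correct, but the core of it runs along a genuinely different route than the paper's proof. Both arguments share the same setup: invoke Lemma~\ref{lem:same_row_always_same_row} and Remark~\ref{rem:keeping_row} to pin the two robots to a common diagram row with even column gap, and use Remark~\ref{rem:changes_dirs_components} to decouple the vertical (lockstep) component from the horizontal one. From there the paper proceeds by an explicit finite case analysis on the pair of horizontal directions --- converging, both rightward, both leftward, diverging, with three subcases in the last --- and chases the two columns by hand until each configuration reduces to the converging case, where the even gap $r-j=2k$ guarantees coincidence at an integer quarter-period. You instead replace this combinatorial chase with the billiard unfolding: each horizontal reflecting walk on $\{0,\dots,2m\}$ becomes uniform unit-rate motion on a cycle of length $4m$, coincidence of folded columns becomes $p_a\equiv\pm p_b\pmod{4m}$, and since both unfolded positions advance at the same rate the only time-dependent condition is $2t\equiv c\pmod{4m}$, solvable at an integer $t$ exactly because $c$ is even --- which is where the parity from Remark~\ref{rem:keeping_row} enters (one checks all four direction combinations give $c\equiv\pm(j_a\mp j_b)\pmod{4m}$, always even). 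Your approach buys uniformity (no case split on directions), an explicit bound on the meeting time (within $2m$ quarter-periods), and it transfers verbatim to the column statement, Theorem~\ref{thm:starv_no_same_column}; the paper's approach buys elementarity, needing no unfolding device. Your endgame is also slightly more careful than the paper's: where the paper concludes ``same vertex, thus not starving,'' you distinguish the interior-vertex case (a link position, so the robots communicate) from the wall-vertex case (a single circle, contradicting Theorem~\ref{thm:no_2r_in_traject}); both branches yield the desired contradiction.
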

\begin{proof}
Let $H$ be the simplified diagram of movement of a grid communication graph. Suppose that there exist two starving robots $a$ and $b$ in vertices with the same row in $H$. There are eight possible combinations of movement direction between them:
\begin{itemize}
\item if $a$ is moving in $(-1,-1)$ direction then $b$ is moving in $(-1,-1)$ or $(-1,+1)$ direction,
\item if $a$ is moving in $(-1,+1)$ direction then $b$ is moving in $(-1,+1)$ or $(-1,-1)$ direction,
\item if $a$ is moving in $(+1,-1)$ direction then $b$ is moving in $(+1,-1)$ or $(+1,+1)$ direction, and
\item if $a$ is moving in $(+1,+1)$ direction then $b$ is moving in $(+1,+1)$ or $(+1,-1)$ direction.
\end{itemize}
Note that the vertical direction of movement of $a$ and $b$ are equal. We focus in the columns of their respective positions, that is, the horizontal movement of them. Let $(i,j)$ and $(i,r)$ be the positions of $a$ and $b$ in $H$, respectively. Without loss of generality, assume that $j\leq r$. If $j=r$ then they are not starving, thus $j<r$ and $r-j=2k$ by Remark~\ref{rem:keeping_row}. Analyzing the possible cases we have that:

\begin{enumerate}
\item If the horizontal movement of $a$ is $+1$ and the horizontal movement of $b$ is $-1$, then, $a$ is moving toward the right wall and $b$ is moving toward the left wall. Note that $a$ is between the left wall and $b$, and $b$ is between $a$ and the right wall. Therefore, after $k$ quarters of a system period the column of $a$ is $j+k$ and the column of $b$ is $r-k$ because they have not changed their respective horizontal movement direction (they have not yet hit any vertical wall). At this instant, $a$ and $b$ are in the same vertex of $H$ because they always are in the same row and $j+k=r-k$, thus they are not starving.\label{case:1}

\item If the horizontal movement of $a$ is $+1$ and the horizontal movement of $b$ is $+1$, then, they are moving toward the right wall. Let $m$ be the index of the last column in $H$. During $m-r$ quarters of a system period, they keep the horizontal movement direction because they do not hit vertical walls. Thus, after $m-r$ a quarters of system period the column of $a$ is $j+m-r$ and the column of $b$ is $r+m-r=m$. At this instant, the horizontal distance between them is still $2k$, and the robot $b$ is hitting the right wall, thus it changes its horizontal movement direction to $-1$. Therefore from this instant they are in the previous case.

\item If the horizontal movement of $a$ is $-1$ and the horizontal movement of $b$ is $-1$, then, they are moving toward the left wall. This case is analogous to the previous one.

\item If the horizontal movement of $a$ is $-1$ and the horizontal movement of $b$ is $+1$, then, $a$ is moving toward the left wall and $b$ is moving toward the right wall. Let $m$ be the index of the last column in $H$, we have the following cases:
\begin{itemize}
\item[] If $j<m-r$, then after $j$ quarters of a system period the column of $a$ is 0 and the column of $b$ is $r+j$. At this instant, $a$ changes its horizontal movement direction to $+1$, $r+j<m$ then $b$ keeps its horizontal movement direction $+1$ and the distance between them is $2(k+j)$. Therefore, from this instant they are in the second case. 
\item[] If $j=m-r$, then after $j$ quarters of a system period the column of $a$ is 0 and the column of $b$ is $r+j=r+m-r=m$. At this instant, $a$ changes its horizontal movement direction to $+1$, $r+j=m$ then $b$ changes its horizontal movement direction to $-1$ and the distance between them is $2(k+j)$. Therefore, from this instant they are in the first case. 
\item[] If $j>m-r$, then after $m-r$ quarters of a system period the column of $a$ is $j-(m-r)$ and the column of $b$ is $r+(m-r)=m$. At this instant, $a$ keeps its horizontal movement direction  $-1$, $r+(m-r)=m$ then $b$ changes its horizontal movement direction to $-1$ and the distance between them is $2(k+m-r)$. Therefore, from this instant they are in the third case. 
\end{itemize}

\end{enumerate}
From the above analysis, the result follows.
\end{proof}

The following results are analogous to Lemma~\ref{lem:same_row_position}, Lemma~\ref{lem:same_row_always_same_row}, Remark~\ref{rem:keeping_row} and Theorem~\ref{thm:starv_no_same_row} for robots in circles that are in the same column in a grid communication graph. We skip the proofs because the arguments are similar.

\begin{lemma}\label{lem:same_column_position}
Let $a$ and $b$ be two robots in circles that are in the same column  in a grid communication graph. If the position of $a$ is $\alpha$, then the position of $b$ is $\alpha$ or $2\pi-\alpha$.
\end{lemma}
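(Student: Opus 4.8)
The plan is to mirror, essentially verbatim, the proof of Lemma~\ref{lem:same_row_position}, since the two statements differ only in one geometric quantity. In Lemma~\ref{lem:same_row_position} the two circles are horizontally aligned, so the supporting line of the connecting edge is horizontal and $\beta_{ab}=0$; here the two circles are vertically aligned, so that supporting line is vertical and $\beta_{ab}=\pi/2$. Everything else — the use of Corollary~\ref{cor:adj_opp}, the congruence reduction of Remark~\ref{rem:positions_congruence}, and the induction on the number of circles between $a$ and $b$ — carries over unchanged. The only reason the conclusion changes from the reflection $\pi-\alpha$ to the reflection $2\pi-\alpha$ is this single substitution of $\beta_{ab}$.

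First I would establish the base case in which $a$ and $b$ occupy vertically adjacent circles of the column. Writing $\alpha=f(C_a)$ and $\beta=f(C_b)$, Corollary~\ref{cor:adj_opp} with $\beta_{ab}=\pi/2$ gives
$$\beta = 2\beta_{ab}-\alpha\pm\pi = \pi-\alpha\pm\pi.$$
Reducing modulo $2\pi$ via Remark~\ref{rem:positions_congruence}, the choice $+\pi$ yields $2\pi-\alpha$ and the choice $-\pi$ yields $-\alpha\equiv 2\pi-\alpha$, so in either case $\beta=2\pi-\alpha$. This is exactly the step where the vertical geometry enters, turning the row's reflection $\pi-\alpha$ into the column's reflection $2\pi-\alpha$.

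Then I would run the induction on the number of circles strictly between $a$ and $b$ along the column. For the inductive step, let $\gamma$ be the position of the robot in the circle immediately adjacent to $b$ in the column; by the inductive hypothesis $\gamma\in\{\alpha,\,2\pi-\alpha\}$, and applying the adjacent-column relation once more gives $\beta=2\pi-\gamma$. Substituting the two possible values of $\gamma$ yields $\beta=2\pi-\alpha$ when $\gamma=\alpha$, and $\beta=2\pi-(2\pi-\alpha)=\alpha$ when $\gamma=2\pi-\alpha$; in both cases $\beta\in\{\alpha,\,2\pi-\alpha\}$, completing the induction. There is no genuine obstacle here beyond bookkeeping: the one point demanding care is verifying that the $\pm\pi$ ambiguity in Corollary~\ref{cor:adj_opp} collapses correctly under the modular reduction, which is precisely the computation carried out in the base case and is the sole place where the column orientation (as opposed to the row orientation of Lemma~\ref{lem:same_row_position}) is used.
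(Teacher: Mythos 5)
Your proof is correct and takes exactly the approach the paper intends: the paper omits this proof entirely, stating only that the argument is analogous to that of Lemma~\ref{lem:same_row_position}, and your write-up carries out that analogy faithfully, with the single substitution $\beta_{ab}=\pi/2$ (vertical supporting line) in Corollary~\ref{cor:adj_opp} turning the row reflection $\pi-\alpha$ into the column reflection $2\pi-\alpha$. The base-case modular computation and the induction on intermediate circles match the row proof step for step, so nothing further is needed.
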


\begin{lemma}\label{lem:same_column_always_same_column}
Suppose that at time $t_0$ there exist two starving robots in circles that are in the same column. Then, at any time after $t_0$ they will be in circles that are in the same column.
\end{lemma}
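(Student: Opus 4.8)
The plan is to mirror the proof of Lemma~\ref{lem:same_row_always_same_row}, interchanging the roles of the horizontal and vertical link positions. First I would invoke Lemma~\ref{lem:same_column_position}: if $a$ and $b$ lie in the same column and the position of $a$ is $\alpha$, then the position of $b$ is either $\alpha$ or $2\pi-\alpha$. This is the column analog of the ``$\alpha$ or $\pi-\alpha$'' relation used in the row case, and it is precisely the invariant I will propagate forward in time.

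Next I would note that, since both robots travel at the same constant speed, their angular positions advance by the same increment $\Delta$. The two robots sit a fixed number $k$ of rows apart, and this number does not change as they move (at a column transition they cross together, so the row difference is preserved). If $k$ is even then $a$ and $b$ share a direction, since adjacent circles carry opposite directions, and $b$ stays at the common position $\alpha$, moving to $\alpha+\Delta$; if $k$ is odd they move in opposite directions, $b$ sits at $2\pi-\alpha$, and $b$ moves to $2\pi-(\alpha+\Delta)$. In either case the relation ``$b$ is at $\alpha'$ or $2\pi-\alpha'$'' survives with $\alpha'=\alpha+\Delta$, so the invariant holds for as long as neither robot switches column.

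The heart of the argument is the instants at which $a$ changes column. In the grid a robot passes to the column on its right at the rightmost point of its circle, namely angle $0\equiv 2\pi$, and to the column on its left at the leftmost point, angle $\pi$; these are exactly the link positions joining horizontally adjacent circles. The key arithmetic observation I would record is that at precisely these angles the two candidate positions for $b$ coincide: at $\alpha=0$ we have $2\pi-\alpha=2\pi\equiv 0$, and at $\alpha=\pi$ we have $2\pi-\alpha=\pi$. Hence when $a$ reaches a horizontal link position, $b$ occupies the identical position and crosses to the adjacent column at the same instant, so the two robots stay in a common column. Iterating this over every column transition shows that the same-column relation persists at all times after $t_0$, proving the lemma.

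I expect the only real subtlety to be the bookkeeping of direction signs --- verifying that even row difference forces equal directions (and position $\alpha$) while odd row difference forces opposite directions (and position $2\pi-\alpha$), so that the invariant truly is preserved across each time step --- together with fixing the convention that assigns the column-crossing angles $0$ and $\pi$ to the right and left walls. Neither is a genuine obstacle: both are inherited from the direction assignment $g(C_i)=-g(C_j)$ on adjacent circles and from the angle conventions already built into Lemma~\ref{lem:same_column_position}, exactly as in the row case.
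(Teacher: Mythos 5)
Your proof is correct and is essentially the paper's own argument: the paper explicitly skips this proof as ``analogous'' to Lemma~\ref{lem:same_row_always_same_row}, and what you write is exactly that analog --- the relation from Lemma~\ref{lem:same_column_position}, plus the observation that the two candidate positions $\alpha$ and $2\pi-\alpha$ coincide precisely at the horizontal link angles $0$ and $\pi$, forcing both starving robots to cross columns simultaneously and in the same horizontal direction. One parenthetical claim is inaccurate but inessential: the row separation $k$ need not stay fixed (it can change by $\pm 2$ when both robots cross vertically in opposite directions, or by $\pm 1$ when exactly one of them crosses because the other sits in the top or bottom row and has no vertical neighbor), yet this does no harm, since the disjunctive invariant ``$b$ is at $\alpha$ or $2\pi-\alpha$'' is re-supplied at every instant directly by the static Lemma~\ref{lem:same_column_position} and does not need your direction bookkeeping to propagate it.
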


\begin{remark}\label{rem:keeping_column}
Let $a$ and $b$ be two robots in circles that are in the same column. If the position of $a$ in the diagram of movement is $(i,j)$, then, the position of $b$ in the diagram of movement is $(r,j)$ with $|r-i|$ even.
\end{remark}

\begin{theorem}\label{thm:starv_no_same_column}
Two starving robots can not be in circles that are in the same column in a grid communication graph.
\end{theorem}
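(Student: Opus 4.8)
The plan is to reproduce, essentially verbatim, the argument of Theorem~\ref{thm:starv_no_same_row}, interchanging the roles of rows and columns (and correspondingly of the horizontal and vertical walls) throughout. First I would invoke the three column-analogues already stated. By Lemma~\ref{lem:same_column_always_same_column}, two starving robots $a$ and $b$ that occupy the same column at some time $t_0$ remain in the same column forever, so in the simplified diagram of movement $H$ they always sit at vertices sharing the same column index. By Remark~\ref{rem:keeping_column} their row indices $(i,j)$ and $(r,j)$ differ by an even amount, say $|r-i|=2k$, and without loss of generality $i\le r$. Finally, by Lemma~\ref{lem:same_column_position}, the constraint that the position of $b$ equals $\alpha$ or $2\pi-\alpha$ whenever that of $a$ is $\alpha$ fixes their \emph{horizontal} diagonal-movement components to be equal at all times (reflecting $\alpha\mapsto 2\pi-\alpha$ across the horizontal axis preserves the horizontal component and only flips the vertical one). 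Thus, exactly mirroring how the row proof reduced all freedom to the horizontal component, here all remaining freedom lies in the vertical component.

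Second, I would set up the case analysis on the two vertical movement directions. As in the row theorem there are eight a~priori combinations, but since the horizontal components agree they collapse to four genuine cases according to whether each robot moves up ($-1$) or down ($+1$) vertically. The structural fact I would reuse is Remark~\ref{rem:changes_dirs_components}: a robot hitting the top or bottom wall alters only its vertical component and leaves its horizontal one untouched. This guarantees that $a$ and $b$ retain identical horizontal components throughout the entire argument, so I may track their row indices in isolation, precisely as the row proof tracked column indices in isolation.

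Third, I would run the four cases with ``row'' in place of ``column'' and the top/bottom walls in place of the left/right walls. Write $M$ for the maximal row index of $H$ (the analogue of the last-column index in the row proof). When the two vertical directions are opposite, say $a$ moves down and $b$ moves up, then after $k$ quarter-periods the row index of $a$ is $i+k$ and that of $b$ is $r-k$; since $r-i=2k$ these coincide, so $a$ and $b$ share a vertex of $H$, meet, and hence are not starving, a contradiction. When the vertical directions agree, I would advance both robots toward the common horizontal wall until the leading one reaches it and bounces, which by Remark~\ref{rem:changes_dirs_components} flips only its vertical component and thereby reduces the configuration to the opposite-direction case. The remaining diverging sub-case (one robot heading up, the other down) splits according to whether $i$ is smaller than, equal to, or larger than $M-r$, paralleling the three sub-cases of the row proof and in each instance routing back into one of the earlier cases.

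I do not expect a genuine obstacle here, only bookkeeping: the one point requiring care is confirming that the reflection symmetry row $\leftrightarrow$ column of Table~\ref{tab:dirs_changes} is faithful, i.e.\ that the transitions triggered at the top and bottom walls act on the vertical component in exactly the way the left and right transitions acted on the horizontal component in Theorem~\ref{thm:starv_no_same_row}. This symmetry is immediate from Remark~\ref{rem:changes_dirs_components} together with the entries of Table~\ref{tab:dirs_changes}, since a horizontal wall affects only the vertical coordinate and vice versa. Once this is verified, the four cases close identically to the row case and the theorem follows.
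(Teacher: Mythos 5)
Your proposal is correct and coincides with the paper's own treatment: the paper explicitly skips this proof, stating that the arguments are similar to those for Theorem~\ref{thm:starv_no_same_row}, and your row--column mirroring (invoking Lemma~\ref{lem:same_column_position}, Lemma~\ref{lem:same_column_always_same_column}, Remark~\ref{rem:keeping_column}, and Remark~\ref{rem:changes_dirs_components}) is exactly that intended analogous argument. In particular, your key observation---that the reflection $\alpha\mapsto 2\pi-\alpha$ forces the \emph{horizontal} diagonal components to agree, so the case analysis tracks row indices against the top and bottom walls---is the faithful transposition of the row case, and the four cases close just as you describe.
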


Then, we deduce the following result:
\begin{corollary}
In an $n\times m$ grid system  the starvation number is $\min(n,m)$.
\end{corollary}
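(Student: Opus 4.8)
The plan is to prove the two inequalities $\starv(G)\le\min(n,m)$ and $\starv(G)\ge\min(n,m)$ separately, where $G$ is the $n\times m$ grid communication graph.

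For the upper bound I would argue directly from the two preceding theorems. At any instant of a partial SCS, the collection of robots that starve is, by Theorem~\ref{thm:starv_no_same_row}, spread over pairwise distinct rows, and, by Theorem~\ref{thm:starv_no_same_column}, over pairwise distinct columns. Hence the circles carrying starving robots realize a placement of non-attacking rooks on the $n\times m$ board, and any such placement has at most $\min(n,m)$ pieces. Since this holds for every configuration, $\starv(G)\le\min(n,m)$; note that no appeal to Corollary~\ref{cor:max_starv} is needed here, the bound being valid for the set of starving robots of an arbitrary partial SCS.

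For the lower bound I would exhibit a starvation state with exactly $\min(n,m)$ robots; assume without loss of generality that $n\le m$, so the target is $n$. The construction I have in mind places one robot in each of the $n$ rows, in $n$ distinct columns, chosen together with the schedule phase so that, in the simplified diagram of movement, the $n$ robots always lie on a common diagonal wavefront: they share a single movement direction and their diagram positions sit, at every quarter period, on a common line of slope $\mp1$, equally spaced. All remaining $nm-n$ robots are then removed. Because the reflection rule of Table~\ref{tab:dirs_changes} makes the motion in the diagram a piecewise isometry that carries this wavefront to another wavefront of the same family, the $n$ robots preserve their mutual spacing; in particular no two of them ever occupy a common vertex of the diagram, so no two ever reach the two sides of a common edge at the same time, and each of them starves. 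This exhibits a starvation state with $n$ robots, giving $\starv(G)\ge\min(n,m)$. The construction is the exact analogue, now for general $m$, of the parallel diagonal family of Lemma~\ref{lem:nxn_rings}, whose $n$ rings already sweep the $n\times n$ grid as such a wavefront.

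The main obstacle is the verification that the wavefront robots genuinely never meet, which is the mirror image of the case analysis carried out in the proof of Theorem~\ref{thm:starv_no_same_row}. I would establish it by unfolding the diagonal billiard, reflecting the diagram repeatedly across its four walls, so that each trajectory straightens into a line of slope $1$ and the $n$ robots become equally spaced points on parallel straight lines, which are patently distinct for all time. The delicate steps are then to show that distinctness survives the folding back to the actual diagram, using the parity constraints of Remarks~\ref{rem:keeping_row} and \ref{rem:keeping_column} to rule out two robots being identified by a reflection, and to treat the reflections at the corners of the diagram; once these are handled, combining the two inequalities yields $\starv(G)=\min(n,m)$.
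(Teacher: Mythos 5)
Your upper bound coincides with the paper's entire proof of this corollary: by Theorems~\ref{thm:starv_no_same_row} and~\ref{thm:starv_no_same_column}, starving robots occupy pairwise distinct rows and pairwise distinct columns, so there are at most $\min(n,m)$ of them. You are also right that this alone does not prove the statement as worded: one still needs a configuration that actually achieves $\min(n,m)$ starving robots, a point the paper passes over in silence. So the substantive content of your proposal is the lower bound, and that is where the problem lies.

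The wavefront you describe cannot exist. The synchronization schedule couples all phases rigidly: propagating Corollary~\ref{cor:adj_opp} (equivalently, Lemmas~\ref{lem:same_row_position} and~\ref{lem:same_column_position}) across the grid shows that if the robot in circle $(i,j)$ has angle $\alpha$, then the robot in circle $(i',j')$ has angle $\alpha$, $\pi-\alpha$, $2\pi-\alpha$ or $\pi+\alpha$ according to the parities of $j'-j$ and $i'-i$, while the sense of rotation alternates with the parity of $i+j$. Working out the four cases, two robots move in the \emph{same} diagonal direction of the simplified diagram at a given instant if and only if $i\equiv i'\pmod 2$ and $j\equiv j'\pmod 2$. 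Your wavefront must contain one robot in each of the $n$ rows (forced by Theorem~\ref{thm:starv_no_same_row}), hence rows of both parities as soon as $n\ge 2$, so its members can never all share a single movement direction; indeed, already in the paper's $2\times 2$ example (Figure~\ref{fig:starvation_intro}) the two starving robots travel in opposite diagonal directions at every instant. Consequently your unfolding argument, which straightens all robots into equally spaced points moving with a common velocity, starts from an unrealizable configuration, and the parity constraints you invoke (Remarks~\ref{rem:keeping_row} and~\ref{rem:keeping_column}) do not apply anyway, since they concern robots sharing a row or column. The idea is salvageable: place the robots in the diagonal circles $(k,k)$, $1\le k\le n$ (assuming $n\le m$), accepting the directions the schedule forces, which alternate with the parity of $k$. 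Lifting each robot to the reflection sheet on which its diagram velocity is $(+1,+1)$, one checks that a meeting of robots $k<l$ would require two congruences of the form $t\equiv c\pmod{2n}$ and $t\equiv c+1\pmod{2m}$ to hold simultaneously, which is impossible because $\gcd(2n,2m)\ge 2$; hence all $n$ robots starve and $\starv(G)\ge\min(n,m)$. With that replacement your plan supplies the half of the proof the paper omits; as written, however, the construction step is false rather than merely incomplete.
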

\begin{proof}
Using theorems \ref{thm:starv_no_same_row} and \ref{thm:starv_no_same_column} we have that in a grid system there are at most a starving robot per column and a starving robot per row. Thus, by pigeonhole principle, $\min(n,m)$ is the maximum number of starving robots.
\end{proof}

From this and using Lemma~\ref{lem:resilience_rel_starv} we deduce the main result of this section:

\begin{theorem}\label{thm:grid_resilience}
In an $n\times m$ grid system the isolation-resilience is $n*m-\min(n,m)-1$.
\end{theorem}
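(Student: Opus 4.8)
The plan is to deduce the theorem directly from the starvation count established in the preceding corollary, combined with the general identity relating the isolation-resilience to the starvation-number. Since in a failure-free SCS every trajectory of an $n\times m$ grid carries exactly one robot, the total number of robots is $N=n\cdot m$. Applying Lemma~\ref{lem:resilience_rel_starv} with this value of $N$, the isolation-resilience and the maximum number of simultaneously starving robots satisfy
\[
\starv(G)+\IsolRes(G)=N-1=n\cdot m-1.
\]

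First I would record that, by the corollary immediately above, the starvation-number of the grid communication graph is $\starv(G)=\min(n,m)$; this is the only place where the grid geometry enters the argument, and it is licensed because the isolation-resilience does not depend on the chosen synchronization schedule (Corollary~\ref{cor:resilience_CW=CCW}). Substituting $\starv(G)=\min(n,m)$ into the displayed identity and solving for $\IsolRes(G)$ gives
\[
\IsolRes(G)=n\cdot m-1-\min(n,m),
\]
which is exactly the claimed value. The remaining manipulation is purely arithmetic.

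The single point that requires care is the clash of notation: the symbol $n$ in Lemma~\ref{lem:resilience_rel_starv} denotes the total robot count, whereas in the grid theorem $n$ is one of the grid dimensions, so one must be explicit that the correct substitution is $N=n\cdot m$ and not $N=n$. Beyond this, there is no genuine obstacle in the final step; all of the difficulty has already been absorbed into the corollary, which in turn rests on Theorems~\ref{thm:starv_no_same_row} and~\ref{thm:starv_no_same_column} (no two starving robots can occupy circles in a common row or a common column) together with the pigeonhole bound of $\min(n,m)$ and a configuration realizing that many simultaneously starving robots. Hence the main theorem follows as an essentially one-line consequence of the structural results developed earlier in the section.
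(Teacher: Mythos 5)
Your proposal is correct and follows essentially the same route as the paper: the paper likewise deduces the theorem by combining the corollary that the starvation number of an $n\times m$ grid is $\min(n,m)$ with the identity $\starv(G)+\IsolRes(G)=N-1$ from Lemma~\ref{lem:resilience_rel_starv}, where $N=n\cdot m$ is the total number of robots. Your explicit flag about the notational clash (grid dimension $n$ versus total robot count) is a sensible clarification but does not change the argument.
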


\section{Conclusion and Future Work}\label{ch:conclusions}

%\SB{should be changed: we have 2 resiliences}

In this work we have studied a combinatorial problem related with the robustness of synchronized system composed by robots that cooperate to cover an area with constrained communication range. Firstly, the theoretical results of  \cite{dbanez2015icra} about synchronized systems on a model formed by identical circular trajectories and robots flying at the same constant speed were presented. Then we stated the concept of \emph{starvation} as a phenomenon that can appear when a set of robots leaves a synchronized system. This phenomenon is characterized by the permanent loss of communication of one or more surviving agents when a set of robots leaves a synchronized system. Also, we present the \emph{starvation state} of a system as an extreme case of disconnection, where all the surviving robots in the system are permanently isolated. Then we introduced the main concept of this work, the \emph{resilience} of a system. More specifically, we defined the \emph{uncovering-resilience} and the \emph{isolation-resilience}.
They measure how resistant is a system to breaking the covering of all the trajectories and the starvation state. The resilience of the system is the maximum number of robots that can fail in the system without resulting these states.

We have proved that the resilience of a synchronized system does not depend on the time when the robots leave the system, or the direction of movement of the robots, or the starting positions of the robots; it depends only on the communication graph considered in the system. Then, we have focused on the study of systems in starvation state. The concept of \emph{ring} has been introduced to study the motion of starving robots in a system. We have presented some general results on rings that have then used to calculate the resilience (or at least some bounds) in specific cases.

By studying the communication graphs that are trees, we have proved that these graphs have only one ring and we found tight bounds for the isolation-resilience in these cases. Moreover, we have shown that if the communication graph is a simple chain formed by $n$ trajectories, then the resilience is $n-2$. 
%It is worth mentioning that these results can be used to design an algorithm that exactly compute the resilience of general trees (this is part of an ongoing work).
%
The case cycle graphs has also been studied. We have shown that these graphs have only two rings and all the starving robots are in one of them. This property lead us to compute easily the isolation-resilience in these cases.

The \emph{grid communication graphs} are another interesting configuration that we have studied in this work. We have shown that the number of rings in these cases is exactly $\text{gcd}(n,m)$, where $n$ and $m$ are the number of rows and columns of the grid communication graph, respectively. We have proven that the isolation-resilience of these cases depends only on the dimensions of the grid communication graph and it is $n\cdot m-\min(n,m)-1$.

The main open problem is to design an efficient algorithm to compute the isolation-resilience of any communication graph. 

%that are trees using the shown results for these cases. Also, we continue studying the starvation in general configurations looking for properties that allow us to compute efficiently the resilience of any general communication graph. By other hand,

Another related possible research line is studying the starvation of exactly $k$ robots in the system.
 %This results leaded us to find tight bounds for the resilience when the communication graph is a tree. Moreover, if the communication graph is a simple chain formed by $n$ trajectories  %Given a communication graph, in order to compute its resilience, we focus on the study of systems in starvation state. We Then we introduce the concept of \emph{ring} to study the movement of the starving 

\bibliography{bibliography}
\bibliographystyle{plain}
\end{document}